\def\mathcal{\mathscr}
\newtheorem{thm}{Theorem}[section]
\newtheorem{lem}[thm]{Lemma}
\newtheorem{cor}[thm]{Corollary}
\newtheorem{prop}[thm]{Proposition}
\theoremstyle{definition}
\newtheorem{rem}[thm]{Remark}
\newtheorem{defn}[thm]{Definition}
\newcommand{\mca}[1]{{\mathcal{#1}}}
\def\ad{\text{\rm ad}}
\def\Z{{\mathbb Z}}
\def\C{{\mathbb C}}
\def\R{{\mathbb R}}
\def\all{\textit{\rm all}}
\def\capp{\textit{\rm cap}}
\def\const{\text{\rm const}}
\def\Crit{\text{\rm Crit}}
\def\CZ{\text{\rm CZ}}
\def\diam{\text{\rm diam}}
\def\dist{\text{\rm dist}}
\def\GL{\text{\rm GL}}
\def\HF{\text{\rm HF}}
\def\ind{\text{\rm ind}}
\def\inn{\text{\rm in}}
\def\interior{\text{\rm int}}
\def\Morse{\text{\rm Morse}\,}
\def\rest{\text{\rm rest}}
\def\rk{\text{\rm rk}}
\def\SH{\text{\rm SH}}
\def\Sp{\text{\rm Sp}}
\def\vol{\text{\rm vol}}
\begin{document}
\pagestyle{plain}
\thispagestyle{plain}

\title[Symplectic capacity and short periodic billiard trajectory]
{Symplectic capacity and short periodic billiard trajectory}

\author[Kei Irie]{Kei Irie}
\address{Department of Mathematics, Faculty of Science, Kyoto University,
Kyoto 606-8502, Japan}
\email{iriek@math.kyoto-u.ac.jp}

\subjclass[2010]{Primary:34C25, Secondary:53D40}
\date{\today}

\begin{abstract}
We prove that a bounded domain $\Omega$ in $\R^n$ with smooth boundary has a periodic billiard trajectory with 
at most $n+1$ bounce times and of length less than $C_n r(\Omega)$, where $C_n$ is a positive constant 
which depends only on $n$, and 
$r(\Omega)$ is the supremum of radius of balls in $\Omega$. 
This result improves the result by C.Viterbo, which asserts that $\Omega$ has a periodic billiard trajectory 
of length less than $C'_n \vol(\Omega)^{1/n}$.
To prove this result, we study symplectic capacity of Liouville domains, which is defined via
symplectic homology.
\end{abstract}

\maketitle

\section{Introduction}\label{sec:intro}
Let $\Omega$ be a bounded domain in ${\R}^n$ with smooth boundary.
A \textit{periodic billiard trajectory} on $\Omega$ is a continuous map $\gamma: \R/\tau\Z \to \bar{\Omega}$ such that 
there exists a finite set $\mca{B} \subset \R/\tau\Z$ and satisfies the following conditions:
\begin{itemize}
\item $\gamma$ is smooth on $(\R/\tau\Z)\setminus \mca{B}$ and satisfies $\ddot{\gamma}=0$.
\item For each $t_0 \in \mca{B}$, $\gamma(t_0) \in \partial \Omega$, the left and right derivatives
$\dot{\gamma}(t_0^\pm):=\lim_{t \to t_0^\pm} \dot{\gamma}(t)$ exist, and satisfy the law of reflection
($\nu$ denotes the outward normal vector on $\partial \Omega$):
\begin{align*}
\big\langle \dot{\gamma}(t_0^+), \nu(\gamma(t_0)) \big\rangle
&=-\big\langle \dot{\gamma}(t_0^-), \nu(\gamma(t_0)) \big\rangle \ne 0,\\
\dot{\gamma}(t_0^+)- \big\langle \dot{\gamma}(t_0^+), \nu(\gamma(t_0)) \big\rangle \cdot \nu(\gamma(t_0))
&=\dot{\gamma}(t_0^-) -\big\langle \dot{\gamma}(t_0^-), \nu(\gamma(t_0)) \big\rangle \cdot \nu(\gamma(t_0)).
\end{align*}
\end{itemize}
Elements of $\mca{B}$ are called \textit{bounce times}.

Before stating the main theorem, we introduce some notations. 
For $x \in \R^n$ and $r \ge 0$, $B(x,r):=\bigl\{ y \in \R^n \bigm{|} |x-y| \le r\bigr\}$.
For $\Omega \subset \R^n$, 
\[
r(\Omega):=\sup \bigl\{r \ge 0\bigm{|}\text{there exists $x \in \R^n$ such that $B(x,r) \subset \Omega$}\bigr\}.
\]
The main theorem of this paper is the following:

\begin{thm}\label{mainthm}
Let $\Omega$ be a bounded domain in $\R^n$ with smooth boundary.
Then, there exists a periodic billiard trajectory $\gamma$ on $\Omega$ with at most $n+1$ bounce times and
which satisfies the following length estimate:
\[
|\gamma| \le C_n r(\Omega),
\]
where $C_n$ is a positive constant which depends only on $n$.
\end{thm}

\begin{rem}
The existence of a periodic billiard trajectory with at most $n+1$ bounce times is due to 
\cite{BenciGiannoni}.
They also consider arbitrary metrics on $\R^n$.

In \cite{Viterbo2}, it is proved (theorem 4.1) that there exists a periodic billiard trajectory $\gamma$ on 
$\Omega$ (with the flat metric) which satisfies a length estimate 
$|\gamma| \le C'_n\vol(\Omega)^{1/n}$, where $C'_n$ is a positive constant which depends only on $n$.
Notice that this result follows from theorem \ref{mainthm} and an
obvious inequality $r(\Omega) \le \omega_n^{-1/n} \vol(\Omega)^{1/n}$, where $\omega_n$ denotes the volume of the $n$ -dimensional unit ball.

In \cite{AlbersMazzucchelli}, it is proved (theorem 1.2, the case of a constant potential)
that there exists a periodic billiard trajectory $\gamma$ on 
$\Omega$ (with the flat metric) with at most $n+1$ bounce times and which
satisfies a length estimate $|\gamma| \le C'' \diam \Omega$, where 
$C''$ is a constant which does not depend on $n$, and 
\[
\diam \Omega:= \inf \bigl\{ |v| \bigm{|} (v+\Omega) \cap \Omega = \emptyset \bigr\}.
\]
Notice that our main theorem also implies this result for each fixed $n$, though we can not prove the independence 
of $C''$ on $n$ by this argument.

Finally we remark that one can easily construct $(\Omega_k)_k$, a sequence of bounded domains in $\R^n$,
such that 
$\lim_{k \to \infty} \vol(\Omega_k) = \lim_{k \to \infty} \diam(\Omega_k) = \infty$ and 
$r(\Omega_k) \le 1$ for any $k$.
\end{rem}

To prove theorem \ref{mainthm}, we use \textit{symplectic capacity} defined via 
symplectic homology, which was introduced in 
\cite{Viterbo}.
In the present paper, symplectic capacity is defined for Liouville domains
(compact exact symplectic manifolds with convex boundaries), 
and it is denoted by
$\capp_S$. The definition is given at the beginning of section 3.

Using symplectic capacity $\capp_S$, we introduce the notion of capacity for Riemannian manifolds (without boundaries), 
which is denoted by $\capp_R$.
Roughly speaking, it is defined by $\capp_R(N):=\capp_S(DT^*N)$, where 
$DT^*N:=\bigl\{(q,p) \in T^*N \bigm{|} |p| \le 1 \bigr\}$.
But when $N$ is non-compact, the right hand side does not make sence since $DT^*N$ is not a Liouville domain
(since it is not compact).
Hence we have to approximate $DT^*N$ by compact domains.
See definition \ref{defn:width} for the precise definition.

We prove that 
$\capp_R$ satisfies following properties:
\begin{enumerate}
\item[(A)] Let $\Omega$ be a non-empty open set in ${\R}^n$. Then, $\capp_R(\Omega) \le C_n r(\Omega)$, 
where $\Omega$ is equipped with the flat Riemannian metric on $\R^n$.
\item[(B)] If $\Omega$ is a bounded domain in ${\R}^n$ with smooth boundary,
there exists a periodic billiard trajectory on $\Omega$ with at most $n+1$ bounce
times and of length equals to $\capp_R(\Omega)$.
\end{enumerate}
Our main theorem \ref{mainthm} follows at once from (A) and (B).

We explain the structure of this paper.
In section 2, we recall the notion of symplectic homology. 
We use the version introduced in \cite{Viterbo}.

In section 3, we define $\capp_S$, and prove its properties.
The most important result in this section is theorem \ref{thm:covering}, which asserts that 
when $\pi:Y \to X$ is a covering map between Liouville domains, then 
$\capp_S(Y) \le \capp_S(X)$.
Though its proof is not very difficult, it seems to the author that this result contains a novel idea.

In section 4, we define $\capp_R$, and prove its properties. 
The main result in this section is theorem \ref{thm:mainestimate}, which includes the property (A).
Theorem \ref{thm:covering} is used to prove that $\R^n \setminus \Z^n$ with the flat metric 
has a finite capacity (theorem \ref{thm:Rn-Zn}).
Theorem \ref{thm:mainestimate} is proved by theorem \ref{thm:Rn-Zn} and elementary geometric arguments.

In section 5, we prove the property (B) (theorem \ref{thm:billiard}). 
The arguments in this section heavily rely on the techniques developed in the recent paper \cite{AlbersMazzucchelli}.

In the appendix, we prove theorem \ref{thm:truncated}, which asserts that truncated symplectic homology
of a Liouville domain $(X,\lambda)$ depends only on $d\lambda$. 
It seems to the author that theorem \ref{thm:truncated} is well-known to experts.
But we give a proof of the result since the author is unable to find its proof in the literature.

\section{Symplectic homology}\label{sec:SH}
\subsection{Liouville domains}

First, we recall the notion of \textit{Liouville domains}.
A Liouville domain is a pair $(X,\lambda)$ where $X$ is a compact manifold with boundary and 
$\lambda$ is a $1$-form on $X$, with the following conditions:
\begin{enumerate}
\item[(1)] $(X,d\lambda)$ is a symplectic manifold.
\item[(2)] $Z \in \mca{X}(X)$ defined by $i_Z d\lambda=\lambda$ points strictly outwards on $\partial X$.
\end{enumerate}
(2) implies that $(\partial X, \lambda)$ is a contact manifold.
Let $R$ be the Reeb vector field on $(\partial X, \lambda)$ (recall that $R$ is characterized by 
$i_Rd\lambda=0, \lambda(R)=1$).

In the rest of this paper, $(X,\lambda)$ stands for a Liouville domain, and $n$ stands for $\dim X/2$, 
unless otherwise stated.

$\mca{P}(\partial X, \lambda)$ denotes the set of periodic Reeb orbits on $(\partial X, \lambda)$, and
$\mca{P}_0(\partial X, \lambda)$ denotes the set of elements of $\mca{P}(\partial X,\lambda)$ which is 
contractible in $X$.
For each $x \in \mca{P}(\partial X, \lambda)$, its period is denoted by $\tau(x)$, and
\[
\tau(\partial X, \lambda):= \bigl\{ \tau(x) \bigm{|} x \in \mca{P}(\partial X,\lambda) \bigr\}.
\]

It is well-known that $\tau(\partial X, \lambda)$ is a closed null set in $[0,\infty)$.
Define $\delta(\partial X,\lambda):=\min \tau(\partial X, \lambda)$.
It is clear that $\delta(\partial X, \lambda)>0$.

There exists an unique (up to homotopy) almost complex structure on $TX$, which is compatible with $d\lambda$.
In the rest of this paper, we only treat the case $c_1(TX)=0$.

Let $(X,\lambda)$ be a Liouville domain. 
We define $\Phi \colon \partial X \times (0, 1] \to X$ by 
\[
\Phi(z,1) = z, \qquad \partial_r\Phi(z,r)=r^{-1}Z\bigl(\Phi(z,r)\bigr).
\]
It is easy to check that $\Phi^*\lambda=r\pi^*\lambda$, where $\pi:\partial X \times (0,1] \to \partial X$ 
is the projection.
Define $\hat{X}$ and $\hat{\lambda} \in \Omega^1(\hat{X})$ by
\[
\hat{X}:=X \cup_\Phi \partial X \times (0,\infty), \qquad
\hat{\lambda}:= \begin{cases}
                \lambda &( \text{on $X$}) \\
                r\pi^*\lambda &(\text{on $\partial X \times (0,\infty)$})
                \end{cases}.
\]
We call $(\hat{X},\hat{\lambda})$ the \textit{completion} of $(X,\lambda)$.

By definition, there exists a natural embedding $I: \partial X \times (0,\infty) \to \hat{X}$. 
We often identify $\partial X \times (0,\infty)$ with its image via $I$.
For $r \in (0,\infty)$, $X(r)$ denotes the bounded domain in $\hat{X}$ with boundary $\partial X \times \{r\}$, i.e.
\[
X(r) := \begin{cases}
       X \cup \partial X \times [1,r] &(r \ge 1) \\
       X \setminus \partial X \times (r,1] &(r<1)
       \end{cases}.
\]

\begin{defn}\label{defn:isotopic}
Two Liouville domains $(X,\lambda)$, $(X',\lambda')$ are called \textit{equivalent} if and only if there exists a
diffeomorphism $\varphi: X' \to X$ such that $\lambda'=\varphi^*\lambda$.
$(X,\lambda)$, $(X',\lambda')$ are called \textit{isotopic} if there exists a smooth family of Liouville domains 
$(X,\lambda_t)_{0 \le t \le 1}$, such that $\lambda_0=\lambda$ and $(X,\lambda_1)$ is equivalent to $(X',\lambda')$.
\end{defn}

\subsection{Periodic orbits of Hamiltonian flows}

For $H \in C^\infty(\hat{X})$, we define its \textit{Hamiltonian vector field} $X_H$ by $i_{X_H}d\hat{\lambda}=-dH$.
For $H=(H_t)_{t \in \R/\tau\Z}$, a family of Hamiltonians on $\hat{X}$ parametrized by $\R/\tau\Z$, let us denote by
$\mca{P}_\tau(H)$ the set of $x \colon \R/\tau\Z \to \hat{X}$ which is contractible and satisfies
$\partial_t x = X_{H_t}(x)$.
$\mca{P}_1(H)$ is often abbreviated by $\mca{P}(H)$.

For $x \in \mca{P}_\tau(H)$, we define its \textit{Conley-Zehnder index}.
For later purposes, it is necessary to define the Conley-Zenhder index for degenerate periodic orbits.
Hence we have to define the index for degenerate symplectic paths. 
We use the definiton given in \cite{Long}.

First we introduce some notations.
Take a coordinate $(q_1,p_1,\ldots,q_n,p_n)$ on $\R^{2n}$, and 
define
\begin{align*}
\lambda_n&:=\frac{1}{2}\sum_{1 \le j \le n} p_j \wedge dq_j - q_j \wedge dp_j, \\
\omega_n&:=d\lambda_n,\\
\Sp(2n)&:=\bigl\{V \in \GL(2n,\R) \bigm{|} V^*\omega_n = \omega_n \bigr\}.
\end{align*}
In the present paper, $\GL(2n,\R)$ acts on $\R^{2n}$ from right, i.e. we denote the action of $\GL(2n,\R)$ on
$\R^{2n}$ by 
\[
(x_1,\ldots,x_{2n}) \cdot (V_{ij})_{1 \le i,j \le 2n}
:=\Biggl( \sum_{1 \le j \le 2n} x_j V_{1j}, \ldots, \sum_{1 \le j \le 2n} x_j V_{2n,j}\Biggr).
\]

For $\tau>0$, let us denote
\[
\mca{P}_\tau(2n):= \bigl\{ \gamma \in C^0\bigl([0,\tau],\Sp(2n)\bigr) \bigm{|} \gamma(0)=1_{2n} \bigr\}.
\]
We define the index $i: \mca{P}_\tau(2n) \to \Z$ by several axioms.
To spell out the axioms, we introduce more notations.
\begin{itemize}
\item For $\gamma_0, \gamma_1 \in \mca{P}_\tau(2n)$, $\gamma_0 \sim \gamma_1$ if and only if 
there exists $\delta \colon [0,1] \times [0,\tau] \to \Sp(2n)$ such that
$\delta(i,\cdot)=\gamma_i(\cdot)\,(i=0,1)$, $\delta(s,0)=1_{2n}$, and 
$\rk(\delta(s,1)-1_{2n})$ is constant on $s$. 
\item For $\gamma_i \in \mca{P}_\tau(2n_i)\,(i=0,1)$, define
$\gamma_0 \diamond \gamma_1 \in \mca{P}_\tau(2n_0+2n_1)$ by 
$\gamma_0 \diamond \gamma_1(t):= \begin{pmatrix} \gamma_0(t)&0 \\ 0&\gamma_1(t) \end{pmatrix}$.
\item For $\gamma_0, \gamma_1 \in C^0\bigl([0,\tau],\Sp(2n)\bigr)$ such that $\gamma_0(\tau)=\gamma_1(0)$, define 
$\gamma_1 * \gamma_0 \in C^0\bigl([0,\tau],\Sp(2n)\bigr)$ by 
$\gamma_1 * \gamma_0(t) = \begin{cases} \gamma_0(2t) &(t \le \tau/2)\\ \gamma_1(2t-\tau) &(t \ge \tau/2)\end{cases}$.
\item For $\tau>0$ and $\theta \in \R$, define $\varphi_{\tau,\theta} \in \mca{P}_\tau(2)$ by 
$\varphi_{\tau,\theta}(t):=\begin{pmatrix} \cos(t\theta/\tau) & -\sin(t\theta/\tau)\\ \sin(t\theta/\tau)&\cos(t\theta/\tau) \end{pmatrix}$.
\end{itemize}

The index $i:\mca{P}_\tau(2n) \to \Z$ is defined by the following axioms:

\begin{thm}[\cite{Long}, theorem 6.2.7]\label{thm:CZ}
For $\tau>0$, there exists an unique map $i: \bigcup_{n \ge 1} \mca{P}_\tau(2n) \to \Z$ which satisfies the following five
axioms:
\begin{enumerate}
\item[(1)] For $\gamma_0, \gamma_1 \in \mca{P}_\tau(2n)$, $\gamma_0 \sim \gamma_1 \implies i(\gamma_0)=i(\gamma_1)$.
\item[(2)]For $\gamma_i \in \mca{P}_\tau(2n_i)\,(i=0,1)$, $i(\gamma_0 \diamond \gamma_1)=i(\gamma_0)+i(\gamma_1)$.
\item[(3)] For any $\gamma \in \mca{P}_\tau(2)$ satisfying 
$\gamma(\tau)=\begin{pmatrix}1&a\\0&1\end{pmatrix}\,(a=0,\pm 1)$, 
there exists $\theta_0>0$ such that
$i\bigl([\gamma(\tau)\varphi_{\tau,-\theta}]*\gamma\bigr)=i(\gamma)$ for any $\theta \in (0,\theta_0]$.
\item[(4)] For any $\gamma \in \mca{P}_\tau(2)$ satisfying 
$\gamma(\tau)=\begin{pmatrix}1&a\\0&1\end{pmatrix}\,(a=\pm 1)$, 
there exists $\theta_0>0$ such that
$i\bigl([\gamma(\tau)\varphi_{\tau,\theta}]*\gamma\bigr)=i(\gamma)+1$ for any $\theta \in (0,\theta_0]$.
\item[(5)] Define $\gamma_0 \in \mca{P}_\tau(2)$ by 
$\gamma_0(t):= \begin{pmatrix}1+t/\tau&0\\0&\bigl(1+t/\tau\bigr)^{-1}\end{pmatrix}$.
Then, $i(\gamma_0)=0$.
\end{enumerate}
\end{thm}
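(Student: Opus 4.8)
The statement is an existence-and-uniqueness assertion, so the plan is to split it: first show the five axioms determine $i$ at most uniquely, then construct a map satisfying them. For uniqueness, the strategy is to use the axioms to reduce the computation of $i(\gamma)$ to a short list of model paths on which the value is forced. Axiom (2) will be the dimension-reduction tool: given $\gamma \in \mca{P}_\tau(2n)$ with endpoint $M := \gamma(\tau) \in \Sp(2n)$, I would first invoke the symplectic normal-form theorem to write $M$ as symplectically conjugate to a $\diamond$-sum of basic normal-form blocks ($2 \times 2$ elliptic, hyperbolic and parabolic blocks, plus the higher-dimensional blocks attached to non-real or repeated eigenvalues). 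Conjugation by a fixed $P \in \Sp(2n)$ is realised by the homotopy $\gamma(\cdot) \mapsto P_s\gamma(\cdot)P_s^{-1}$ along a path $P_s$ from $1_{2n}$ to $P$ inside the connected group $\Sp(2n)$; since $P_s(M-1_{2n})P_s^{-1}$ has constant rank, this homotopy is admissible for Axiom (1). Hence, after conjugating and splitting, Axiom (2) reduces everything to paths in $\Sp(2) \cong SL(2,\R)$.

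In the $n=1$ case I would then classify the $\sim$-equivalence classes directly. Two elements of $\mca{P}_\tau(2)$ are $\sim$-equivalent precisely when they lie in the same path-component of the space of paths whose endpoint has a fixed value of $\rk(\gamma(\tau)-1_2) \in \{0,1,2\}$; lifting to the universal cover of $\Sp(2)$ and using $\pi_1(\Sp(2)) = \Z$, one sees these classes are indexed by an integral ``winding'' datum together with the discrete datum of which stratum (the two components of $\Sp(2)^*$, the two parabolic conjugacy classes with $\rk(M-1_2)=1$, or $M=1_2$) the endpoint lies in. Axiom (5) pins down $i$ on one hyperbolic model path; Axiom (1) together with connectedness of the strata propagates that value across each stratum; and Axioms (3) and (4) are exactly the ``jump relations'' recording how $i$ changes when the endpoint is pushed from a parabolic matrix $\begin{pmatrix}1 & a \\ 0 & 1\end{pmatrix}$ into a neighbouring elliptic stratum — the rotation $\varphi_{\tau,-\theta}$ leaving $i$ unchanged, the opposite rotation $\varphi_{\tau,\theta}$ raising it by one when $a = \pm 1$. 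Since every $\sim$-class in $\Sp(2)$ is reached from the Axiom (5) model by a finite chain of admissible homotopies and such stratum-crossings, the value of $i$ on it is forced, which gives uniqueness.

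For existence I would produce the candidate explicitly. On the open set of nondegenerate paths ($\det(\gamma(\tau) - 1_{2n}) \neq 0$) define $i(\gamma)$ to be the classical Conley--Zehnder index, e.g.\ as a Maslov-type index of the graph of $\gamma$ or via the rotation function of the $U(n)$-part of a polar decomposition; on this set Axioms (1), (2), (5) are standard properties, and (3)--(4) encode its known one-sided semicontinuity at the parabolic wall. For a path with degenerate endpoint define $i(\gamma)$ by the distinguished one-sided limit dictated by Axioms (3)--(4): replace $\gamma$ near its endpoint by $[\gamma(\tau)\varphi_{\tau,-\theta}] * \gamma$ for small $\theta>0$ and take the resulting nondegenerate value. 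One then checks directly that this extension is additive under $\diamond$, invariant under rank-preserving homotopies, and satisfies the normalisation and jump axioms.

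The genuine work — and the point where care is essential — will be the degenerate case: the classical Conley--Zehnder index is only defined off the parabolic wall and genuinely jumps there, so one must verify that the one-sided limit used to extend $i$ is well defined (independent of the admissible perturbation) and still invariant under the stratified homotopies of Axiom (1). Closely related is confirming, in the uniqueness half, that the normal-form decomposition together with Axiom (1) really splits an arbitrary path as a $\diamond$-sum through an admissible homotopy: rank-preservation of conjugation makes this work, but the higher-dimensional eigenvalue blocks, where the reduction to $2$-dimensional pieces is less transparent, are exactly where the normal-form machinery of \cite{Long} is needed, and I would lean on that reference for those details.
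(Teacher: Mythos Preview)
The paper does not prove this theorem. Theorem~\ref{thm:CZ} is quoted verbatim from \cite{Long} (theorem 6.2.7) and used as a black box; the paper adds only the remark that \cite{Long} contains several equivalent definitions and then extracts from one of them the lower-semicontinuity lemma~\ref{lem:degenerate}. So there is no ``paper's own proof'' to compare your sketch against.

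As for your outline itself: it is broadly in the spirit of Long's treatment (uniqueness via normal forms plus the jump axioms, existence via the classical Conley--Zehnder index extended across the degenerate wall), and you correctly flag the two delicate points. One warning, though: your reduction ``reduces everything to paths in $\Sp(2)$'' is too quick. The symplectic normal form of a general $M\in\Sp(2n)$ need not split into $2\times 2$ blocks --- Krein-collision eigenvalues on the unit circle, complex quadruples off the axes, and nontrivial Jordan structure each produce indecomposable $4\times 4$ (or larger) pieces. Axiom~(2) alone does not determine $i$ on those; Long handles them via his $\omega$-index iteration theory and a separate list of basic normal-form paths in each even dimension, not by a naive descent to $\Sp(2)$. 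You gesture at this in your final paragraph, but be aware that it is not a minor detail: the uniqueness argument genuinely requires the full normal-form catalogue and cannot be completed with only $2\times 2$ models.
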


In \cite{Long}, several equivalent definitions are given (definition 5.4.2, definition 6.1.10).
In particular, definition 6.1.10 in \cite{Long} implies the following useful lemma:

\begin{lem}\label{lem:degenerate}
Let us define the set of non-degenerate symplectic paths by
\[
\mca{P}^*_\tau(2n):=\bigl\{\gamma \in \mca{P}_\tau(2n) \bigm{|} \rk\bigl(\gamma(\tau)-1_{2n}\bigr)=2n \bigr\}.
\]
Then, for any $\gamma \in \mca{P}_\tau(2n)$,
$i(\gamma)= \sup_{U \in N(\gamma)} \inf\{i(\beta) \bigm{|} \beta \in U \cap \mca{P}_\tau^*(2n)\bigr\}$,
where $N(\gamma)$ denotes the set of all open neighborhoods of $\gamma$ in $\mca{P}_\tau(2n)$.
\end{lem}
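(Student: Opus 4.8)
The plan is to derive the lemma from the axiomatic characterization in Theorem \ref{thm:CZ} together with the normal-form description of symplectic paths; in fact the quickest route, which is the one I would follow when writing, is to compare directly with Definition 6.1.10 of \cite{Long}, in which $i(\gamma)$ is defined via exactly such a lower perturbation, so that the lemma becomes the assertion that that definition coincides with the axiomatic one. First I record two elementary points. (a) The map $i$ is locally constant on $\mca{P}^*_\tau(2n)$: if $\beta_0,\beta_1 \in \mca{P}^*_\tau(2n)$ lie in the same path component, any joining homotopy $\delta$ has $\rk(\delta(s,1)-1_{2n})=2n$ for all $s$, so $\beta_0 \sim \beta_1$ and axiom (1) gives $i(\beta_0)=i(\beta_1)$. (b) The set of $V \in \Sp(2n)$ with $\det(V-1_{2n})=0$ is a proper real-analytic subvariety, so a generic compactly supported perturbation of $\gamma$ near $t=\tau$ moves $\gamma(\tau)$ off it; hence $U \cap \mca{P}^*_\tau(2n) \neq \emptyset$ for every $U \in N(\gamma)$, and all infima in the statement are over non-empty sets.

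It then suffices to establish: \textbf{(i)} there exists $U_0 \in N(\gamma)$ with $i(\beta) \geq i(\gamma)$ for every $\beta \in U_0 \cap \mca{P}^*_\tau(2n)$; and \textbf{(ii)} for every $U \in N(\gamma)$ there exists $\beta \in U \cap \mca{P}^*_\tau(2n)$ with $i(\beta)=i(\gamma)$. Indeed, by (i) the quantity $\inf\{i(\beta) \mid \beta \in U \cap \mca{P}^*_\tau(2n)\}$ is $\geq i(\gamma)$ whenever $U \subset U_0$, so the supremum over $N(\gamma)$ is $\geq i(\gamma)$; by (ii) that infimum is $\leq i(\gamma)$ for \emph{every} $U$, so the supremum is $\leq i(\gamma)$. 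Equality follows.

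To prove (i) and (ii), I would invoke that, up to the relation $\sim$, the path $\gamma$ decomposes as a $\diamond$-product of standard blocks — rotation blocks $\varphi_{\tau,\theta}$ with $\theta \notin 2\pi\Z$, non-degenerate unipotent and hyperbolic blocks, and degenerate blocks whose endpoint is conjugate to $\begin{pmatrix} 1 & a \\ 0 & 1 \end{pmatrix}$ with $a \in \{0,\pm 1\}$ — on which axioms (2)--(5) compute $i(\gamma)$ additively, and axioms (3)--(4) describe how a small non-degenerate perturbation changes the index: resolving a degenerate block by a small rotation in one direction leaves the index fixed, while resolving it in the opposite direction can only increase it (by $1$ in the $a = \pm 1$ case). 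Since a non-degenerate endpoint close to $\gamma(\tau)$ differs from $\gamma(\tau)$ only in how these degenerate blocks are resolved, no resolution lowers the index, giving (i); and resolving every block in the index-preserving direction produces, inside any prescribed neighborhood of $\gamma$, a non-degenerate path $\beta$ with $i(\beta)=i(\gamma)$, giving (ii).

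The main obstacle is this final matching step: showing that \emph{every} non-degenerate path in a small $C^0$-neighborhood of $\gamma$ is $\sim$-equivalent to one obtained by resolving the degenerate blocks of $\gamma$ independently, with the index monotone as above. This is precisely where Long's normal-form theory is needed — the degenerate locus is a stratified subvariety of $\Sp(2n)$, $i$ is constant on the components of its complement near $\gamma(\tau)$ by point (a), and one must identify those components with the combinatorics of block resolutions. Rather than reconstructing this machinery, I would simply quote Definition 6.1.10 of \cite{Long} and its equivalence with Theorem \ref{thm:CZ}, for which the lemma is then immediate.
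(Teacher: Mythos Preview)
Your proposal is correct and ultimately takes exactly the same approach as the paper: the paper does not give an independent proof but simply observes that Definition~6.1.10 in \cite{Long} (together with its equivalence to the axiomatic definition of Theorem~\ref{thm:CZ}) yields the lemma directly. The additional sketch you give of how (i) and (ii) would follow from the normal-form machinery is reasonable but, as you yourself note, not needed once one cites Long.
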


The following lemma follows at once from the above lemma.

\begin{lem}\label{lem:i-1}
Assume that a sequence $(\gamma_k)_k$ in $\mca{P}_\tau(2n)$ converges to $\gamma$ in $\mca{P}_\tau(2n)$.
Then, $i(\gamma) \le \liminf_{k \to \infty} i(\gamma_k)$.
\end{lem}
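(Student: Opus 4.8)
The plan is to derive Lemma~\ref{lem:i-1} directly from the variational characterization of the index in Lemma~\ref{lem:degenerate}. I will use that $\mca{P}_\tau(2n)=C^0([0,\tau],\Sp(2n))$ carries the sup-metric $d(\gamma,\gamma'):=\max_{t\in[0,\tau]}\|\gamma(t)-\gamma'(t)\|$, so that $\gamma_k\to\gamma$ means precisely $d(\gamma_k,\gamma)\to 0$ and the balls $B(\gamma_k,1/k)$ form convenient shrinking neighborhoods.

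First I would extract from Lemma~\ref{lem:degenerate}, applied at $\gamma$, a single neighborhood on which the index of non-degenerate paths is at least $i(\gamma)$. Indeed, $U\mapsto\inf\{i(\beta)\mid\beta\in U\cap\mca{P}^*_\tau(2n)\}$ is non-decreasing as $U$ shrinks, is everywhere $\le i(\gamma)$, and has supremum $i(\gamma)$ over $N(\gamma)$; since an infimum of a set of integers is either an integer or $-\infty$, it follows that for all small enough $k$ one has $\inf\{i(\beta)\mid\beta\in B(\gamma,1/k)\cap\mca{P}^*_\tau(2n)\}=i(\gamma)$. Fixing such a radius $\rho>0$ and setting $U_0:=B(\gamma,\rho)$, we get $i(\beta)\ge i(\gamma)$ for every $\beta\in U_0\cap\mca{P}^*_\tau(2n)$.

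Next I would run Lemma~\ref{lem:degenerate} in the opposite direction at each $\gamma_k$: applying the formula with the neighborhood $B(\gamma_k,1/k)$ gives $\inf\{i(\beta)\mid\beta\in B(\gamma_k,1/k)\cap\mca{P}^*_\tau(2n)\}\le i(\gamma_k)<\infty$, so this set is nonempty and I may choose $\beta_k\in\mca{P}^*_\tau(2n)$ with $d(\beta_k,\gamma_k)<1/k$ and $i(\beta_k)\le i(\gamma_k)$. By the triangle inequality $d(\beta_k,\gamma)\le d(\beta_k,\gamma_k)+d(\gamma_k,\gamma)\to 0$, hence $\beta_k\in U_0$ for all large $k$, and for those $k$ the previous step yields $i(\gamma)\le i(\beta_k)\le i(\gamma_k)$. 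Taking $\liminf$ over $k$ gives the claim.

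There is essentially no obstacle here; the one point that deserves care is the bookkeeping in the second paragraph, namely that the ``$\sup$ of $\inf$'' in Lemma~\ref{lem:degenerate} is attained on an honest neighborhood $U_0$ of $\gamma$ (equivalently, that near $\gamma$ the index of non-degenerate paths is bounded below by $i(\gamma)$). If one prefers to bypass this, the same two ingredients give a contradiction argument: assuming $\liminf_k i(\gamma_k)=m<i(\gamma)$, pass to a subsequence with $i(\gamma_k)\le m$, build $\beta_k\to\gamma$ non-degenerate with $i(\beta_k)\le m$ exactly as above, and contradict Lemma~\ref{lem:degenerate} at $\gamma$, which forces every non-degenerate path sufficiently close to $\gamma$ to have index $\ge i(\gamma)>m$.
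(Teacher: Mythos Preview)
Your argument is correct and is exactly the natural unwinding of Lemma~\ref{lem:degenerate} that the paper has in mind when it says the result ``follows at once'' from that lemma; the paper gives no further details. Your careful remark that the integer-valued $\sup_U\inf$ is actually attained on some neighborhood $U_0$ is the only nontrivial point, and you handle it correctly.
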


Next we define the Conley-Zehnder index $\mu_{\CZ}(x)$ for $x \in \mca{P}_\tau(H)$.
Let $D^2:=\bigl\{z \in \C \mid |z| \le 1 \bigr\}$, and 
take arbitrary $\bar{x} \colon D^2 \to \hat{X}$ such that 
$\bar{x}(e^{2\pi i\theta}) = x(\tau\theta)$ (such $\bar{x}$ exists since $x$ is contractible).
Since $D^2$ is contractible, $\bar{x}^* T\hat{X}$ is a trivial symplectic vector bundle.
Take the following trivialization of symplectic vector bundle:
\[
F: (\R^{2n},\omega_n) \times D^2 \to \bar{x}^*T\hat{X}; \qquad
(v,z) \mapsto \bigl(F_z(v), z \bigr).
\]

Define $\gamma: \R/\tau\Z \to \Sp(2n)$ by 
\[
\gamma(t):= (F_{e^{2\pi i t/\tau}})^{-1} \circ \Phi_t \circ F_1.
\]
where $(\Phi_t)_t$ is the Poincar\'{e} map generetaed by $(X_{H_t})_t$.
Finally, we define $\mu_{\CZ}(x)$ by 
\[
\mu_{\CZ}(x):=i(\gamma).
\]
Since $c_1(TX)=0$, the above definition is independent of choices of $\bar{x}$.
An element $x \in \mca{P}_\tau(H)$ is called \textit{non-degenerate} if and only if 
$\gamma \in \mca{P}_\tau^*(2n)$.

\subsection{Floer homology on Liouville domains}

For $r_0 \ge 1$, let $\mca{H}(X,\lambda:r_0)$ be the set of 
$H=(H_t)_{t \in \R/\Z}$, a family of Hamiltonians on $\hat{X}$ parametrized by $\R/\Z$, with the following property:
\begin{quote}
There exist $a>0$, $b \in \R$ such that 
$H_t(z,r)= ar + b$ for any $(z,r) \in \partial X \times [r_0, \infty)$ and $t \in \R/\Z$.
(we denote $a, b$ by $a_H, b_H$.)
\end{quote}
We denote $\mca{H}(X,\lambda):=\bigcup_{r_0 \ge 1} \mca{H}(X,\lambda:r_0)$.

$H \in \mca{H}(X,\lambda)$ is called \textit{admissible} if all elements 
of $\mca{P}(H)$ are non-degenerate, and $a_H \notin \tau(\partial X,\lambda)$.
$\mca{H}_{\ad}(X,\lambda)$ denotes the set of all admissible 
$H \in \mca{H}(X,\lambda)$.
Note that when $H$ is admissible, then
$\sharp \mca{P}(H) < \infty$.

For $H \in \mca{H}_{\ad}(X,\lambda)$, we define its Floer homology $\HF_*(H)$.
For each $k \in \Z$, let 
$\mca{P}_k(H)$ denote the set of $x \in \mca{P}(H)$ with $\mu_{\CZ}(x)=k$, and let
$C_k(H)$ denote the free $\Z_2$-module over $\mca{P}_k(H)$.

To define the Floer homology, we need to equip $\hat{X}$ with almost complex structures.
For $r_0 \ge 1$, let $\mca{J}(X,\lambda:r_0)$ be the set of 
$J=(J_t)_{t \in \R/\Z}$, a family of almost complex structures on $\hat{X}$ parametrized by $\R/\Z$, 
such that following properties hold for any $t \in \R/\Z$ ($R$ and $\xi$ denote the Reeb vector field and 
the contact distribution on $(\partial X, \lambda)$): 
\begin{itemize}
\item $J_t$ is compatible with $d\hat{\lambda}$.
\item $J_t\bigl(\partial_r(z,r)\bigr)=r^{-1}R(z,r)$ for $(z,r) \in \partial X \times [r_0,\infty)$.
\item There exists $j_t$, an almost complex structure on $\xi$ such that 
$J_t|_{\xi(z,r)} = j_t$ for $(z,r) \in \partial X \times [r_0,\infty)$.
\end{itemize}
We denote $\mca{J}(X,\lambda):=\bigcup_{r_0 \ge 1} \mca{J}(X,\lambda:r_0)$.

\begin{rem}
Recall that an almost complex structure $J$ on $\hat{X}$ is compatible with $d\hat{\lambda}$ if and only if 
the bilinear form on $T\hat{X}$
\[
\langle v, w \rangle_J:= d\hat{\lambda}(v,Jw)\quad \bigl(v,w \in T\hat{X})
\]
is a Riemannian metric.
Let us denote $\langle v,v\rangle_J^{1/2}$ by $|v|_J$.
\end{rem}

Let $H \in \mca{H}_{\ad}(X,\lambda)$, $J \in \mca{J}(X,\lambda)$.
For $x_-, x_+ \in \mca{P}(H)$, we consider the Floer equation for $u: \R \times \R/\Z \to \hat{X}$, namely:
\[
\partial_s u- J_t\bigl(\partial_t u - X_{H_t}(u)\bigr) =0, \qquad
u(s) \to x_{\pm}\,(s \to \pm \infty).
\]
In the second formula, $u(s)$ denotes the map $\R/\Z \to \hat{X}; t \mapsto u(s,t)$.
Let us denote the moduli space of solutions of the above Floer equations by $\hat{\mca{M}}\bigl(x_-,x_+:H,J\bigr)$.
$\hat{\mca{M}}\bigl(x_-,x_+:H,J\bigr)$ admits a natural $\R$ action:
\[
s_0 \cdot u(s,t):= u(s-s_0,t).
\]
$\mca{M}\bigl(x_-,x_+:H,J\bigr)$
denotes the quotient of $\hat{\mca{M}}\bigl(x_-,x_+:H,J\bigr)$ by the above $\R$ action.

For generic $J$, $\mca{M}\bigl(x_-,x_+:H,J\bigr)$ is a smooth manifold with dimension $\mu_{\CZ}(x_-) - \mu_{\CZ}(x_+)-1$.
For such $J$, we define the differential $\partial_{H,J}:C_k(H) \to C_{k-1}(H)$ by 
\[
\partial_{H,J}[x_-] := \sum_{x_+ \in \mca{P}_{k-1}(H)} \sharp \mca{M}\bigl(x_-,x_+:H,J\bigr)\cdot [x_+]\qquad \bigl(x_-\in \mca{P}_k(H) \bigr).
\]
Then, $\bigl(C_*(H), \partial_{H,J})$ becomes a chain complex.
It follows from
the following $C^0$ bound for Floer trajectories (this is a special case of lemma \ref{lem:C0bound-2}, 
which is stated later):

\begin{lem}\label{lem:C0bound-1}
There exists a compact set $B \subset \hat{X}$ such that for any
$x_-, x_+ \in \mca{P}(H)$ and 
$u \in \hat{\mca{M}}(x_-,x_+:H,J)$, $u(\R \times \R/\Z) \subset B$.
\end{lem}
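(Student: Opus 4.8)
The plan is to prove this as an a priori estimate of maximum-principle type on the cylindrical end of $\hat X$, taking $B$ to be a sublevel set of the radial coordinate. Fix $r_0 \ge 1$ large enough that $H \in \mca{H}(X,\lambda:r_0)$ and $J \in \mca{J}(X,\lambda:r_0)$, so that $H_t(z,r) = a_H r + b_H$ on $\partial X \times [r_0,\infty)$. I claim that $B := X(r_0)$ works; equivalently, that no $u \in \mca{M}(x_-,x_+:H,J)$ enters $\partial X \times (r_0,\infty)$.

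The first step is to observe that the asymptotics already avoid $\partial X \times (r_0,\infty)$, i.e. $x_\pm \in X(r_0)$. On $\partial X \times [r_0,\infty)$ one computes $i_R d\hat\lambda = -dr$, hence $X_{H_t} = a_H R$ there (viewing $R$ as an $r$-independent vector field); this field is nowhere zero since $a_H > 0$ and is tangent to the level sets of $r$, and there are no closed time-$1$ orbits of $a_H R$ because $a_H \notin \tau(\partial X,\lambda)$. It follows easily that $\mca{P}(H) \subset X(r_0)$.

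The core of the argument is a subharmonicity computation for $\rho := r \circ u$ on the open set $U := u^{-1}\bigl(\partial X \times (r_0,\infty)\bigr) \subset \R \times \R/\Z$ (on which $\rho$ is well defined, since $u(U)$ lies in the cylindrical end). On $\partial X \times [r_0,\infty)$ the defining properties of $\mca{J}(X,\lambda:r_0)$ give $J_t\partial_r = r^{-1}R$, $J_tR = -r\partial_r$ and $\hat\lambda \circ J_t = dr$; substituting these together with $i_R d\hat\lambda = -dr$ and the Floer equation $\partial_s u = J_t(\partial_t u - a_H R)$ into $\Delta\rho = \partial_s^2\rho + \partial_t^2\rho$, I expect a direct (if sign-sensitive) computation to produce
\[
\Delta\rho \;=\; |\partial_s u|_{J_t}^2 \;\ge\; 0
\]
on $U$, so that $\rho$ is subharmonic. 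Granting this, I would finish by the strong maximum principle: if $\rho$ took some value $> r_0$, then, since $u(s,\cdot) \to x_\pm$ uniformly as $s \to \pm\infty$ with $x_\pm \in X(r_0)$, the supremum of $\rho$ would be attained at an interior point of $U$, forcing $\rho$ to be constant on the connected component $U'$ of $U$ through that point; but $\partial U' \subset \rho^{-1}(r_0)$ then contradicts constancy unless $U' = \R \times \R/\Z$, which is impossible because $\rho \to r \circ x_\pm \le r_0$ at the ends. Hence $u(\R \times \R/\Z) \subset X(r_0) = B$.

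I expect the main obstacle to be making the final step rigorous despite the non-compactness of the domain $\R \times \R/\Z$: controlling $\rho$ as $s \to \pm\infty$ and identifying $\partial U'$ require the standard exponential convergence of finite-energy Floer trajectories to their non-degenerate asymptotics. A clean variant avoiding the case analysis is to integrate $\Delta\rho = |\partial_s u|^2$ over $U$ and apply the divergence theorem: the boundary integral of the outward normal derivative of $\rho$ is $\le 0$ on $\partial U$, where $\rho$ attains the minimum value $r_0$, and has vanishing contribution at $s = \pm\infty$ by exponential decay; hence $\partial_s u \equiv 0$ on $U$, so $\partial_t u = a_H R$ on $U$ and each $U$-slice would be a closed time-$1$ orbit of $a_H R$, which does not exist — forcing $U = \emptyset$.
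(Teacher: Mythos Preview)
Your argument is correct and is exactly the standard maximum-principle proof; the paper does not give its own proof of this lemma but simply notes that it is the special case of Lemma~\ref{lem:C0bound-2}, which in turn is attributed to \cite{Oancea}. Your computation $\Delta\rho=|\partial_s u|_{J_t}^2$ is the $s$-independent specialization of the calculation the paper carries out later in Step~2 of Lemma~\ref{lem:C0bound-3}, and your ``clean variant'' via integration over a superlevel set is precisely the device used in the proof of Lemma~\ref{lem:convexbdry}; the only cosmetic fix needed is to replace $r_0$ by a nearby regular value $r_1>r_0$ of $\rho$ (as the paper does there) so that the domain of integration is a compact surface with smooth boundary.
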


It can be shown that the homology group of the complex $\bigl(C_*(H), \partial_{H,J}\bigr)$ is independent of 
choices of $J$, and we denote it by $\HF_*(H)$, or $\HF_*\bigl(H:(X,\lambda)\bigr)$, when we need to 
specify the Liouville domain.

Let $H_-, H_+ \in \mca{H}_{\ad}(X,\lambda)$ and assume that $a_{H_-} \le a_{H_+}$.
Then, there exists a canonical morphism $\HF_*(H_-) \to \HF_*(H_+)$. 
This is constructed as follows:
take $r_0 \ge 1$ and 
$(H_s)_{s \in \R}$, a family of elements in $\mca{H}(X,\lambda:r_0)$ and 
$(J_s)_{s \in \R}$, a family of elements in $\mca{J}(X,\lambda:r_0)$ which 
satisfy the following conditions:
\begin{itemize}
\item There exists $s_0>0$ such that $H_s = \begin{cases} 
                                             H_- &( s \le -s_0) \\
                                             H_+ &( s \ge s_0) 
                                            \end{cases}$,
$J_s = \begin{cases}
        J_{-s_0} &(s \le -s_0) \\
        J_{s_0}  &(s \ge s_0)
       \end{cases}$.
\item $\partial_s a_{H_s} \ge 0$.
\end{itemize}
For $x_- \in \mca{P}(H_-)$ and $x_+ \in \mca{P}(H_+)$, consider the Floer equation 
for $u: \R \times \R/\Z \to \hat{X}$:
\[
\partial_su - J_{s,t}\bigl(\partial_t u - X_{H_{s,t}}(u) \bigr)=0, \qquad
u(s)\to x_{\pm}\,(s \to \pm \infty), 
\]
where $H_{s,t}:=(H_s)_t$, $J_{s,t}:=(J_s)_t$.

We denote the moduli space of solutions of the above Floer equation by $\hat{\mca{M}}\bigl(x_-,x_+:(H_s,J_s)_s\bigr)$.
For generic $(J_s)_s$, $\hat{\mca{M}}\bigl(x_-,x_+:(H_s,J_s)_s\bigr)$ is a smooth manifold of dimension 
$\mu_{\CZ}(x_-) - \mu_{\CZ}(x_+)$. Taking such $(J_s)_s$, we define a morphism 
$\varphi \colon C_k(H_-) \to C_k(H_+)$ by 
\[
\varphi [x_-] = \sum_{x_+ \in \mca{P}_k(H_+)} \sharp \hat{\mca{M}}\bigl(x_-,x_+: (H_s,J_s)_s\bigr) \cdot [x_+]
\qquad \bigr(x_- \in \mca{P}_k(H_-)\bigr).
\]
Then, $\varphi$ is a chain map from $\bigl(C_*(H_-),\partial_{H_-,J_-}\bigr)$ to 
$\bigl(C_*(H_+), \partial_{H_+,J_+}\bigr)$.
It follows from the following $C^0$ bound for Floer trajectories
(it follows from lemma 1.5 in \cite{Oancea}):

\begin{lem}\label{lem:C0bound-2}
There exists a compact set $B \subset \hat{X}$ such that
for any $x_- \in \mca{P}(H_-)$, $x_+ \in \mca{P}(H_+)$ and 
$u \in \hat{\mca{M}}\bigl(x_-, x_+:(H_s,J_s)_s \bigr)$, $u(\R \times \R/\Z) \subset B$.
\end{lem}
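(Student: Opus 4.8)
The statement to prove is Lemma \ref{lem:C0bound-2}, a $C^0$ bound for Floer trajectories of a monotone homotopy $(H_s,J_s)_s$ between two admissible Hamiltonians $H_-, H_+ \in \mathcal{H}_{\ad}(X,\lambda)$.

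The plan is to run the standard convexity (maximum principle) argument adapted to Liouville completions, exactly along the lines of Oancea's lemma 1.5. First I would fix $r_0 \ge 1$ large enough that all of $H_-, H_+$, every $H_s$, and every $J_s$ are of the prescribed conical form on $\partial X \times [r_0,\infty)$; this is guaranteed since, by construction, the homotopy data lie in $\mathcal{H}(X,\lambda:r_0)$ and $\mathcal{J}(X,\lambda:r_0)$ for a common $r_0$. Since all periodic orbits $x_\pm$ lie in $X(r_0)$ (on the cone the Hamiltonian vector field is a multiple of the Reeb field, whose orbits are periodic only for periods in $\tau(\partial X,\lambda)$, while $a_{H_\pm} \notin \tau(\partial X,\lambda)$), it suffices to show that no Floer cylinder $u$ can escape the compact set $X(R)$ for a suitable $R > r_0$ depending only on the homotopy. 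The compact set $B$ in the statement will then be $X(R)$.

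The key step is the maximum principle on the conical end. On $\partial X \times [r_0,\infty)$ write $u = (a(s,t), \rho(s,t))$ in the coordinates $(z,r)$, and consider the function $\rho \circ u$. Suppose $u$ enters the region $\{r > r_0\}$; I would show that $\rho\circ u$ cannot attain an interior local maximum there with value $> r_0$. This is the computation that on the cone, where $\hat\lambda = r\pi^*\lambda$, $H_s(z,r) = a_{H_s} r + b_{H_s}$, and $J_s$ sends $\partial_r$ to $r^{-1}R$ and preserves $\xi$, the Floer equation forces $\rho\circ u$ to be (weakly) subharmonic in a suitable sense: one computes $\Delta(\rho\circ u) \ge 0$ modulo first-order terms, using that the $J_s$-holomorphic-type equation with conical Hamiltonian has no "$r$-direction" source term because $d(a_{H_s}r) = a_{H_s}\,dr$ is exact and matches the Liouville form structure. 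The monotonicity hypothesis $\partial_s a_{H_s} \ge 0$ is exactly what is needed so that the extra term coming from the $s$-dependence of $H_s$ has the right sign and does not destroy subharmonicity. By the strong maximum principle, $\rho \circ u$ attains its supremum either in $X(r_0)$ or as $s \to \pm\infty$; since $x_\pm \subset X(r_0)$, in all cases $\sup \rho\circ u \le r_0$, so $u$ never leaves $X(r_0)$, and we may even take $B = X(r_0)$.

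The main obstacle is getting the sign conventions in the maximum-principle computation exactly right: one must carefully unwind the definitions of $\hat\lambda$, $X_{H_s}$ (via $i_{X_H}d\hat\lambda = -dH$), and the compatibility/conical normalization of $J_s$ on $\xi$ and on $\partial_r$, and verify that the first-order terms can be absorbed so that a genuine (sub)mean-value or Hopf-lemma argument applies; the $s$-dependence requires the inequality $\partial_s a_{H_s} \ge 0$ precisely here. Since this is a known result (cited to Oancea, lemma 1.5, and it is the special case needed for Lemma \ref{lem:C0bound-1} as well), I would carry out the computation on the cone in local coordinates, invoke the strong maximum principle, and conclude; no genuinely new idea is required beyond the standard convexity argument for symplectic homology.
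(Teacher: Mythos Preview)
Your proposal is correct and follows exactly the approach the paper takes: the paper does not give a self-contained proof but simply cites lemma 1.5 in \cite{Oancea}, which is precisely the convexity/maximum-principle argument you outline. One minor remark: in this setting (fixed Liouville domain, conical $H_s$ and $J_s$ on $\partial X \times [r_0,\infty)$) the computation in fact gives $\Delta(r\circ u) = |\partial_s u|^2 + (\partial_s a_{H_s})\cdot r \ge 0$ outright, so there are no first-order terms to absorb; compare the paper's own computation in the proof of Lemma~\ref{lem:C0bound-3}, Steps 2--3, specialized to $W=0$.
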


Therefore, $\varphi$ defines a morphism $\varphi_* \colon \HF_*(H_-) \to \HF_*(H_+)$.
This morphism does not depend on choice of $(H_s,J_s)_s$.

To sum up, we have constructed the canonical morphism $\HF_*(H_-) \to \HF_*(H_+)$ 
for $H_-,H_+ \in \mca{H}_\ad(X,\lambda)$ such that $a_{H_-} \le a_{H_+}$.
This morphism is called \textit{monotone morphism}.

We also study truncated version of the Floer homology.
For any $x:\R/\Z \to \hat{X}$, let
\[
\mca{A}_H(x):=\int_{\R/\Z} x^*\hat{\lambda} - H\bigl(x(t)\bigr) dt.
\]
For any interval $I \subset [-\infty, \infty]$, 
let $C^I_k(H)$ be the free $\Z_2$-module generated over
\[
\bigl\{ x \in \mca{P}_k(H) \bigm{|} \mca{A}_H(x) \in I \bigr\}.
\]
For $x_-, x_+ \in \mca{P}(H)$ and $u \in \hat{\mca{M}}(x_-, x_+:H,J)$, by straightforward calculations we get
\[
-\partial_s\bigl(\mca{A}_H\bigl(u(s)\bigr)\bigr)= \int_{\R/\Z} \big|\partial_s u(s,t)\big|_{J_t}^2 dt.
\]

In particular, if $\mca{A}_H(x)<\mca{A}_H(y)$, then 
$\hat{\mca{M}}(x,y:H,J)=\emptyset$. 
Hence for any interval $I \subset [-\infty,\infty]$, 
$\bigl(C_*^I(H), \partial_{H,J}\bigr)$ is a chain complex.
Then, we denote $H_*\bigl(C_*^{I}(H),\partial_{H,J}\bigr)$ (which does not depend on $J$)
by $\HF^{I}_*(H)$.

For $-\infty \le a < b < c \le \infty$, there exists a short exact sequence
\[
0 \to C_*^{[a,b)}(H) \to C_*^{[a,c)}(H) \to C_*^{[b,c)}(H) \to 0.
\]
Hence we get a long exact sequence 
\begin{equation}\label{eq:abc-1}
\cdots \to \HF_*^{[a,b)}(H) \to \HF_*^{[a,c)}(H) \to \HF_*^{[b,c)}(H) \to  \HF_{*-1}^{[a,b)}(H) \to \cdots.
\end{equation}

\subsection{Symplectic homology}
Let $(X,\lambda)$ be a Liouville domain.
In this subsection, we define \textit{symplectic homology}
$\SH_*^I(X,\lambda)$ for any interval $I \subset \R$.

First, we define $\mca{H}^{\rest}(X,\lambda) \subset \mca{H}(X,\lambda)$ by
\[
\mca{H}^{\rest}(X,\lambda):=\bigl\{ H \in \mca{H}(X,\lambda:1) \bigm{|} \text{$H_t|_X<0$ for any $t \in \R/\Z$} \bigr\}.
\]
For $H_-, H_+ \in \mca{H}^{\rest}(X,\lambda)$, we denote $H_- \le H_+$ if and only if 
$(H_-)_t \le (H_+)_t$ for any $t \in \R/\Z$.

Let $H_-, H_+ \in \mca{H}^\rest_\ad(X,\lambda):=\mca{H}_\ad(X,\lambda) \cap \mca{H}^\rest(X,\lambda)$.
When $H_- \le H_+$, we can construct a morphism 
$\HF_*^I(H_-) \to \HF_*^I(H_+)$ for any interval $I \subset \R$.
This is constructed as follows.
First, take $(H_s)_{s \in \R}$, a family of elements of $\mca{H}^{\rest}(X,\lambda)$ and 
$(J_s)_{s \in \R}$, a family of elements of $\mca{J}(X,\lambda:1)$ which satisfy the following properties:
\begin{itemize}
\item There exists $s_0>0$ such that $H_s=\begin{cases}
                                           H_- &(s \le -s_0) \\
                                           H_+ &(s \ge s_0)
                                          \end{cases}$, 
                                      $J_s=\begin{cases}
                                           J_{-s_0} &(s \le -s_0) \\
                                           J_{s_0} &(s \ge s_0)
                                           \end{cases}$.
\item $\partial_sH_{s,t}(x) \ge 0$ for any $(s,t) \in \R \times \R/\Z$ and $x \in \hat{X}$.
\end{itemize}
For $x_- \in \mca{P}(H_-)$, $x_+ \in \mca{P}(H_+)$ and $u \in \hat{\mca{M}}\bigl(x_-,x_+:(H_s,J_s)_s\bigr)$, 
\[
-\partial_s\bigl(\mca{A}_{H_s}(u(s))\bigr)
=\int_{\R/\Z} |\partial_s u|_{J_{s,t}}^2 + \partial_sH_{s,t}(u) dt \ge 0.
\]
Hence if $\mca{A}_{H_-}(x_-)<\mca{A}_{H_+}(x_+)$, then 
$\hat{\mca{M}}\bigl(x_-,x_+:(H_s,J_s)_s\bigr)=\emptyset$.

Therefore the morphism 
$\varphi: \bigl(C_*^I(H_-),\partial_{H_-,J_-}\bigr) \to \bigl(C_*^I(H_+), \partial_{H_+,J_+}\bigr)$ 
defined by 
\[
\varphi [x_-] = \sum_{x_+ \in \mca{P}_k(H_+)} \sharp \hat{\mca{M}}\bigl(x_-,x_+:(H_s,J_s)_s\bigr) \cdot [x_+]
\qquad \bigr(x_- \in \mca{P}_k(H_-)\bigr)
\]
is a chain map.
Hence we get a morphism $\HF_*^I(H_-) \to \HF_*^I(H_+)$.
This morphism does not depend on choices of $(H_s,J_s)_s$.
Then, we define $\SH_*^I(X,\lambda)$ by 
\[
\SH_*^I(X,\lambda):= \varinjlim_{H \in \mca{H}^\rest_\ad(X,\lambda)} \HF_*^I(H).
\]

For $-\infty \le a<b<c \le \infty$, by taking limit of (\ref{eq:abc-1}), we get a long exact sequence
\begin{equation}\label{eq:abc-2}
\cdots \to \SH_*^{[a,b)}(X,\lambda) \to \SH_*^{[a,c)}(X,\lambda) \to \SH_*^{[b,c)}(X,\lambda) \to \SH_{*-1}^{[a,b)}(X,\lambda) \to \cdots.
\end{equation}

For $a \in (-\infty,\infty]$, $\SH_*^{(-\infty,a)}(X,\lambda)$ is often denoted by $\SH_*^{<a}(X,\lambda)$.
$\SH_*^{<\infty}(X,\lambda)$ is often abbreviated by $\SH_*(X,\lambda)$. 
The following lemma will be useful in later:

\begin{lem}\label{lem:firstdef}
For any $H \in \mca{H}_\ad(X,\lambda)$, there exists a canonical isomorphism
$\SH_*^{<a_H}(X,\lambda) \to \HF_*(H)$.
When $H_-, H_+ \in \mca{H}_\ad(X,\lambda)$ satisfy $a_{H_-} \le a_{H_+}$, the following diagram commutes:
\[
\xymatrix{
\SH_*^{<a_{H_-}}(X,\lambda) \ar[r]^-{\cong}\ar[d] & \HF_*(H_-)\ar[d]\\
\SH_*^{<a_{H_+}}(X,\lambda) \ar[r]_-{\cong}& \HF_*(H_+)
}.
\]
\end{lem}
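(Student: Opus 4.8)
The plan is to compute $\SH_*^{<a_H}(X,\lambda)$ by means of a single, conveniently chosen Hamiltonian of slope $a_H$, and then to identify its Floer homology with $\HF_*(H)$ through continuation isomorphisms; write $a:=a_H$ throughout. The first ingredient is that the slope is a complete invariant of $\HF_*$: for $H_0,H_1\in\mca{H}_\ad(X,\lambda)$ with $a_{H_0}=a_{H_1}$ there is a canonical isomorphism $\HF_*(H_0)\cong\HF_*(H_1)$ compatible with the monotone morphisms constructed above. To see this, choose $K\in\mca{H}_\ad(X,\lambda)$ linear of slope $a_{H_0}$ at infinity with $K\ge H_0$ and $K\ge H_1$; the homotopies from $H_i$ to $K$ can be taken with slope constantly $a_{H_0}\notin\tau(\partial X,\lambda)$, so the $C^0$-bound of Lemma~\ref{lem:C0bound-2} holds uniformly and the continuation maps $\HF_*(H_i)\to\HF_*(K)$ are isomorphisms by the standard reversibility of continuation along homotopies that are linear at infinity with a fixed slope off $\tau(\partial X,\lambda)$. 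It therefore suffices to prove the lemma for one Hamiltonian of slope $a$ of our choice, as long as the isomorphism we produce is the one induced by these continuations; the commuting square of the statement then follows automatically.

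Next I fix a model Hamiltonian. Let $\tilde H\in\mca{H}^\rest_\ad(X,\lambda)$ satisfy $a_{\tilde H}=a$, be $C^0$-small and negative on $X$, and on $\partial X\times[1,\infty)$ equal a convex radial function $h(r)$ with $h'\equiv a$ already for $r$ close to $1$. Its $1$-periodic orbits split into two kinds: those in the interior of $X$, which are $C^0$-close to critical points of $\tilde H|_X$ and hence have action at most $\|\tilde H\|_{C^0(X)}$, which we arrange to be $<a$; and those lying over $\partial X\times\{r\}$ with $h'(r)=\tau(y)$ for some contractible $y\in\mca{P}(\partial X,\lambda)$, necessarily with $\tau(y)<a$, which --- because $h$ is steep near $r=1$ --- occur at radii arbitrarily close to $1$ and therefore have action arbitrarily close to $\tau(y)$, in particular $<a$ after a suitable choice. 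Consequently $C_*^{<a}(\tilde H)=C_*(\tilde H)$, so the inclusion induces an isomorphism $\HF_*^{<a}(\tilde H)\xrightarrow{\ \cong\ }\HF_*(\tilde H)$.

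It remains to show that the colimit structure map $\HF_*^{<a}(\tilde H)\to\SH_*^{<a}(X,\lambda)=\varinjlim_{K\in\mca{H}^\rest_\ad(X,\lambda)}\HF_*^{<a}(K)$ is an isomorphism. Since $\{K\in\mca{H}^\rest_\ad(X,\lambda):K\ge\tilde H\}$ is cofinal in $\mca{H}^\rest_\ad(X,\lambda)$ (given $L$, take $K$ pointwise above $\max(L,\tilde H)$ with sufficiently large slope), it is enough to prove that the monotone map $\HF_*^{<a}(\tilde H)\to\HF_*^{<a}(K)$ is an isomorphism for each such $K$; note that monotone maps preserve the action filtration, by the inequality $-\partial_s\bigl(\mca{A}_{H_s}(u(s))\bigr)\ge0$ established above, so this statement is meaningful at the level of the filtered complexes. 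The key point is that $\HF_*^{<a}(K)$, for $K$ of slope $\ge a$, depends up to canonical isomorphism only on $a$: raising the slope above $a$ produces no new $1$-periodic orbit of action $<a$ (the new orbits sit near radii where the slope exceeds $a$, hence near Reeb periods $\ge a$, so have action $\ge a$), and the Floer differential restricted to the subcomplex generated by orbits of action $<a$ is unchanged, since such trajectories are confined --- by the action bound together with a slope-independent $C^0$-estimate --- to a fixed compact region in which the Hamiltonians may be taken to agree. Granting this, $\HF_*^{<a}(\tilde H)\to\HF_*^{<a}(K)$ is an isomorphism for all $K\ge\tilde H$, hence so is the structure map, and combining with the first two paragraphs gives the asserted canonical isomorphism $\SH_*^{<a_H}(X,\lambda)\cong\HF_*(H)$, whose naturality is built into every arrow used.

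The delicate part is the last one. Continuation maps respect the action filtration only when the homotopy is monotone, so one cannot freely replace $K$ by a standard model while keeping track of the truncation; instead one must stay within the monotone family and show, via the interplay between the action bound and the $C^0$-estimate (a refinement of Lemma~\ref{lem:C0bound-1} and Lemma~\ref{lem:C0bound-2}), that the truncated Floer homology $\HF_*^{<a}$ stabilizes as the slope grows past $a$. This stabilization is the real content of the proof; the reduction to a model Hamiltonian and the computation of the actions of its orbits are routine.
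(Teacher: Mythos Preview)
Your approach is sound and reaches the right conclusion, but it is organised differently from the paper's proof. The paper never singles out a model Hamiltonian; instead it writes the desired isomorphism as a composite of four natural maps between colimits,
\[
\SH_*^{<a_H}(X,\lambda)\;\cong\;
\varinjlim_{\substack{G\in\mca{H}^\rest_\ad\\a_G\le a_H}}\HF_*^{<a_H}(G)\;\cong\;
\varinjlim_{\substack{G\in\mca{H}^\rest_\ad\\a_G\le a_H}}\HF_*(G)\;\cong\;
\varinjlim_{\substack{G\in\mca{H}_\ad\\a_G\le a_H}}\HF_*(G)\;\cong\;
\HF_*(H),
\]
and asserts each is an isomorphism. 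Your first paragraph is exactly the content of the last two arrows (slope determines $\HF_*$ up to canonical isomorphism), your choice of a model $\tilde H$ with all orbits of action below $a$ witnesses the second arrow, and your ``stabilisation'' claim is precisely the first arrow. So the two arguments have the same substance; the paper's packaging simply avoids choosing a preferred Hamiltonian, which makes the naturality statement more transparent.

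One point does need tightening. You claim it suffices to show that $\HF_*^{<a}(\tilde H)\to\HF_*^{<a}(K)$ is an isomorphism \emph{for every} $K\ge\tilde H$, and you justify this by saying the Hamiltonians ``may be taken to agree'' in the region containing all low-action orbits and trajectories. For an arbitrary $K\in\mca{H}^\rest_\ad(X,\lambda)$ with $K\ge\tilde H$ this is false at the chain level: $K|_X$ need not coincide with $\tilde H|_X$, and $K$ can have an entirely different set of $1$-periodic orbits in $X$, all of action below $a$. What you actually need, and what suffices for identifying the colimit, is the isomorphism along a single \emph{cofinal} family of carefully shaped $K$'s; for those your chain-level reasoning (no new low-action orbits, trajectories confined by the maximum principle to a region where the Hamiltonians agree) goes through. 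Your phrase ``may be taken'' shows you have this in mind, but the quantifier over $K$ should be corrected accordingly. The paper sidesteps this by working with colimits throughout and declaring the four maps ``not hard to check''---so in fact it leaves the same stabilisation step at the same level of detail as you do.
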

\begin{proof}
It is not hard to check that the following natural morphisms are all isomorphic:
\begin{align*}
\varinjlim_{\substack{G \in \mca{H}^\rest_\ad \\ a_G \le a_H}} \HF_*^{<a_H}(G) &\to 
\varinjlim_{G \in \mca{H}^\rest_\ad} \HF_*^{<a_H}(G)=\SH_*^{<a_H}(X,\lambda), \\
\varinjlim_{\substack{G \in \mca{H}^\rest_\ad \\ a_G \le a_H}} \HF_*^{<a_H}(G) &\to
\varinjlim_{\substack{G \in \mca{H}^\rest_\ad \\ a_G \le a_H}} \HF_* (G), \\
\varinjlim_{\substack{G \in \mca{H}^\rest_\ad \\ a_G \le a_H}} \HF_*(G) &\to 
\varinjlim_{\substack{G \in \mca{H}_\ad \\ a_G \le a_H}} \HF_*(G), \\
\varinjlim_{\substack{G \in \mca{H}_\ad \\ a_G \le a_H}} \HF_*(G)& \to \HF_*(H).
\end{align*}
By composing the above isomorphisms and their inverses, we get an isomorphism
$\SH_*^{<a_H}(X,\lambda) \to \HF_*(H)$. This proves the first assertion.
The second assertion follows from the above construction.
\end{proof}

We recall three well-known results on symplectic homology.
All these results were established in \cite{Viterbo}.
The first result is the following:

\begin{thm}\label{thm:delta}
For any $0< \delta \le \delta(\partial X,\lambda)$, 
there exists a canonical isomorphism $\SH_*^{<\delta}(X,\lambda) \to H_{*+n}(X,\partial X)$.
\end{thm}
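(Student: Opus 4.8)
The plan is to reproduce Viterbo's original computation: realize $\SH_*^{<\delta}$ as the Floer homology of a $C^2$-small Hamiltonian whose only one-periodic orbits are the critical points of a Morse function on $X$, and then recognize the resulting chain complex as a Morse complex computing $H_{*+n}(X,\partial X)$.

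First I would reduce to the case $0<c<\delta(\partial X,\lambda)$. For every admissible $H$ the set $\mca{P}(H)$ is finite, so $C_*^{<\delta}(H)=C_*^{<c}(H)$ once $c<\delta$ is close enough to $\delta$; passing to homology and then to the colimit over $\mca{H}^\rest_\ad(X,\lambda)$ gives $\SH_*^{<\delta}(X,\lambda)=\varinjlim_{c\uparrow\delta}\SH_*^{<c}(X,\lambda)$. Hence it is enough to construct, for each $c$ with $0<c<\delta(\partial X,\lambda)$, a canonical isomorphism $\SH_*^{<c}(X,\lambda)\cong H_{*+n}(X,\partial X)$ compatible with the maps $\SH_*^{<c}\to\SH_*^{<c'}$. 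Fix such a $c$ and choose $H_0\in\mca{H}^\rest_\ad(X,\lambda)$, with slope $a_{H_0}=c$, which is radial ($H_0=h(r)$, $0<h'\le c$) on $\partial X\times[1-\epsilon,\infty)$ and a generic $C^2$-small negative Morse function on $X(1-\epsilon)$; it is admissible because $c<\delta(\partial X,\lambda)=\min\tau(\partial X,\lambda)$ forces $c\notin\tau(\partial X,\lambda)$. On $\partial X\times[1-\epsilon,\infty)$ one has $X_{H_0}=h'(r)R$, whose contractible one-periodic orbits would require $h'(r)\in\tau(\partial X,\lambda)$, which is impossible; on $X(1-\epsilon)$ the field $X_{H_0}$ is $C^1$-small and has no non-constant one-periodic orbit. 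Hence $\mca{P}(H_0)=\Crit(H_0)$, viewed as constant loops, with $\mca{A}_{H_0}(p)=-H_0(p)\in(0,c)$; so $C_*^{<c}(H_0)=C_*(H_0)$ and Lemma \ref{lem:firstdef} yields a canonical isomorphism $\SH_*^{<c}(X,\lambda)=\SH_*^{<a_{H_0}}(X,\lambda)\cong\HF_*(H_0)$.

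It remains to identify $\HF_*(H_0)$. For $p\in\Crit(H_0)$ the linearized flow along the constant orbit at $p$ is $t\mapsto\exp\bigl(tJ_0\,\mathrm{Hess}(H_0)_p\bigr)$ in a symplectic trivialization; diagonalizing the small symmetric matrix $\mathrm{Hess}(H_0)_p$ and applying the axioms of Theorem \ref{thm:CZ} to the resulting two-dimensional blocks gives $\mu_{\CZ}(p)=n-\ind_{\mathrm{Morse}}(p;H_0)=\ind_{\mathrm{Morse}}(p;-H_0)-n$. Taking the almost complex structure time-independent and gradient-like for $H_0$ and perturbing it slightly to the Morse--Smale situation, the standard argument for $C^2$-small autonomous Hamiltonians (due to Floer; in the present Liouville setting it is part of \cite{Viterbo}, the necessary a priori $C^0$ bounds being supplied by Lemma \ref{lem:C0bound-1}) shows that the Floer cylinders between constant orbits are exactly the gradient trajectories of $H_0$, transversally cut out. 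Thus $(C_*(H_0),\partial_{H_0,J})$ is, after the index shift above, the Morse complex of $-H_0$; and since $H_0$ increases in $r$ near $\partial X$, the negative gradient flow of $-H_0$ points out of $X$ along $\partial X$, so this Morse homology is $H_*(X,\partial X)$. Hence $\HF_k(H_0)\cong H_{k+n}(X,\partial X)$, giving $\SH_*^{<c}(X,\lambda)\cong H_{*+n}(X,\partial X)$. Finally, any two admissible choices $(H_0,J)$ and $(H_0',J')$ are joined by monotone continuation maps that restrict to the Morse continuation isomorphism, which is the identity on $H_{*+n}(X,\partial X)$; this gives both the independence of the isomorphism from the choices and its compatibility with $\SH_*^{<c}\to\SH_*^{<c'}$, and then the colimit $c\uparrow\delta$ also covers the boundary case $\delta=\delta(\partial X,\lambda)$.

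The step I expect to be the main obstacle is the identification of the Floer complex with the Morse complex for $C^2$-small $H_0$: ruling out ``fake'' Floer cylinders (an energy estimate combined with elliptic bootstrapping, using Lemma \ref{lem:C0bound-1} for a priori $C^0$ control) and achieving transversality without leaving the class of time-independent almost complex structures; pinning down the Conley--Zehnder normalization so that the grading shift comes out to exactly $+n$ is the other point requiring care.
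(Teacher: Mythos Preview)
Your proposal is correct and follows the standard Viterbo computation. The paper itself does not give a proof of this theorem but simply refers to proposition~1.4 in \cite{Viterbo}; your argument is precisely a fleshed-out version of that reference, and the index convention $\mu_{\CZ}(p)=n-\ind_{\Morse}(p;H_0)$ you use matches the paper's own usage (cf.\ the proof of theorem~\ref{thm:monotonicity}, where index-$0$ critical points sit in $\HF_n$).
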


The second result is the following:

\begin{thm}\label{thm:isotopyinvariance}
If $(X,\lambda)$ and $(Y,\lambda')$ are isotopic as Liouville domains, then 
$\SH_*(X,\lambda) \cong \SH_*(Y,\lambda')$.
\end{thm}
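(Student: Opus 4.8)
The plan is to reduce the statement to a continuity property of symplectic homology along a one-parameter family $(X,\lambda_t)_{0\le t\le 1}$ of Liouville domains, and then invoke the monotone morphisms constructed in Section 2. First I would recall that, by Definition \ref{defn:isotopic}, it suffices to treat two cases separately: (i) $(Y,\lambda')$ is equivalent to $(X,\lambda)$, i.e. $\lambda'=\varphi^*\lambda$ for a diffeomorphism $\varphi\colon Y\to X$; and (ii) $(Y,\lambda')=(X,\lambda_1)$ for a smooth family $(X,\lambda_t)_t$ with $\lambda_0=\lambda$. Case (i) is essentially tautological: a diffeomorphism $\varphi$ with $\varphi^*\lambda=\lambda'$ extends canonically to a diffeomorphism $\hat\varphi\colon\hat Y\to\hat X$ of completions intertwining $\hat\lambda'$ and $\hat\lambda$, hence identifies the sets $\mca{H}^\rest_\ad$, the Hamiltonian orbit sets $\mca{P}(H)$, the Conley--Zehnder indices, the actions $\mca{A}_H$, and the Floer moduli spaces; pushing everything forward gives a canonical isomorphism $\SH_*(Y,\lambda')\cong\SH_*(X,\lambda)$. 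So the content is entirely in case (ii).

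For case (ii), the key step is to show that $t\mapsto\SH_*(X,\lambda_t)$ is locally constant, which by connectedness of $[0,1]$ yields $\SH_*(X,\lambda_0)\cong\SH_*(X,\lambda_1)$. Fix $t_0$. For $t$ near $t_0$ the Liouville vector fields $Z_t$ (defined by $i_{Z_t}d\lambda_t=\lambda_t$) all point strictly outward along $\partial X$, so the completions $(\hat X,\hat\lambda_t)$ vary smoothly. I would then compare $\SH_*(X,\lambda_{t_0})$ and $\SH_*(X,\lambda_t)$ by constructing mutually inverse continuation maps: choose a monotone homotopy of Hamiltonians and almost complex structures interpolating between a cofinal family for $(X,\lambda_{t_0})$ and one for $(X,\lambda_t)$, and count solutions of the associated $s$-dependent Floer equation. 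The crucial analytic input is a $C^0$-bound for these Floer trajectories uniform in the homotopy parameter; this is exactly of the type supplied by Lemma \ref{lem:C0bound-2}, applied on a fixed large region $\partial X\times(0,R]$ on which all the $\hat\lambda_t$ have the standard conical form up to a controlled rescaling. The standard ``sandwiching'' argument—composing the continuation map $\SH_*(\lambda_{t_0})\to\SH_*(\lambda_t)$ with $\SH_*(\lambda_t)\to\SH_*(\lambda_{t_0})$ and recognizing the composite as a continuation map for a homotopy from a Hamiltonian to a larger one of the same slope, hence the identity on the direct limit—then shows both composites are the identity, so the continuation maps are isomorphisms.

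I expect the main obstacle to be the uniform $C^0$-estimate and the attendant bookkeeping about the slopes $a_H$: as $t$ varies one must ensure that a Hamiltonian which is admissible and has linear slope outside $X(r_0)$ for $(X,\lambda_{t_0})$ can be deformed, through the family, to one adapted to $(X,\lambda_t)$ while keeping the slope away from the (moving) period spectrum $\tau(\partial X,\lambda_t)$ and keeping all periodic orbits in a fixed compact set. Since $\tau(\partial X,\lambda_t)$ is a closed null set depending continuously on $t$, for $t$ close to $t_0$ one can pick a common non-spectral slope, and the maximum principle underlying Lemma \ref{lem:C0bound-2} localizes the trajectories independently of $t$. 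Once these uniform bounds are in place, the invariance of the direct limit $\varinjlim_{H}\HF_*^{<\infty}$ under the deformation is formal. I would remark that this is precisely the argument of \cite{Viterbo}, included here only for completeness; no new idea is needed beyond what is already recalled in Section 2.
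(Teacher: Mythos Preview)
The paper does not give its own proof of this theorem; it simply cites Theorem~1.7 of \cite{Viterbo}. Your sketch is the standard continuation-plus-sandwiching argument that underlies Viterbo's proof, so there is nothing to compare against beyond saying that your outline is consistent with what the paper invokes by reference.

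One technical point is worth flagging. You appeal to Lemma~\ref{lem:C0bound-2} for the uniform $C^0$-bound on the interpolating Floer trajectories, but that lemma is stated for an $s$-dependent pair $(H_s,J_s)$ on a \emph{fixed} completion $(\hat X,\hat\lambda)$. In case~(ii) the completions $(\hat X,\hat\lambda_t)$ themselves move with $t$, and in particular the cylindrical end, the Reeb dynamics, and the notion of ``slope $a_H$'' all depend on $t$. So Lemma~\ref{lem:C0bound-2} does not literally apply; one must either (a) use a Moser/Gray-type argument to produce, for $t$ near $t_0$, an exact symplectic embedding of $(\hat X,\hat\lambda_t)$ into $(\hat X,\hat\lambda_{t_0})$ identifying the ends up to a bounded conformal factor, and then run the continuation argument on a single completion; or (b) set up a fibered Floer problem over $\R$ with fiber $(\hat X,\hat\lambda_{t(s)})$ and prove a $C^0$-estimate in that setting, in the spirit of Lemma~\ref{lem:C0bound-3} in Section~3.1 (where the analogous issue is handled for a family with $d\lambda_s$ fixed). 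You correctly identify this as the main obstacle, and either route closes the gap; but your phrase ``applied on a fixed large region $\partial X\times(0,R]$ on which all the $\hat\lambda_t$ have the standard conical form up to a controlled rescaling'' is doing real work and should be made precise along one of these two lines.
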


As a corollary of the above theorem, we can conclude that $\SH_*(X,\lambda)$ depends only on $d\lambda$.
Assume that $(X,\lambda), (X,\lambda')$ are Liouville domains, and $d\lambda=d\lambda'$.
Then, $\bigl(X, t\lambda + (1-t)\lambda')_{0 \le t \le 1}$ is a family of Liouville domains, and theorem \ref{thm:isotopyinvariance}
implies that $\SH_*(X,\lambda) \cong \SH_*(X,\lambda')$.
Hence we often denote $\SH_*(X,\lambda)$ by $\SH_*(X,d\lambda)$.

The third result is the following:

\begin{thm}\label{thm:ball}
For positive integer $n$ and $r>0$, let 
\[
B^{2n}(r):=\bigl\{(q,p) \in \R^{2n} \bigm{|} |q|^2+|p|^2 \le r^2 \bigr\}.
\]
Then, $\bigl(B^{2n}(r),\lambda_n\bigr)$ is a Liouville domain, and 
$\SH_*\bigl(B^{2n}(r),\lambda_n\bigr)=0$.
\end{thm}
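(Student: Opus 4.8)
The statement to prove is Theorem \ref{thm:ball}: $\bigl(B^{2n}(r),\lambda_n\bigr)$ is a Liouville domain with vanishing symplectic homology. The first half is essentially bookkeeping: $d\lambda_n=\omega_n$ is the standard symplectic form, and the Liouville vector field $Z$ determined by $i_Zd\lambda_n=\lambda_n$ is the radial field $Z=\frac12\sum_j(q_j\partial_{q_j}+p_j\partial_{p_j})$, which points strictly outward on the round sphere $\partial B^{2n}(r)$; hence condition (2) in the definition of Liouville domain holds. Also $c_1(TB^{2n}(r))=0$ since the ball is contractible, so the standing assumption of the paper is satisfied. By the rescaling $(q,p)\mapsto r^{-1}(q,p)$, which pulls $\lambda_n$ back to $r^2\lambda_n$ and so gives an isotopy of Liouville domains, Theorem \ref{thm:isotopyinvariance} reduces us to computing $\SH_*\bigl(B^{2n}(1),\lambda_n\bigr)$; alternatively one just runs the argument directly for arbitrary $r$.

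The main content is the vanishing. The plan is to exploit the explicit Reeb dynamics on the boundary: the Reeb flow on $(\partial B^{2n}(r),\lambda_n)$ is the (reparametrized) Hopf flow, all of whose orbits are periodic with a common minimal period $\pi r^2$, and the action spectrum $\tau(\partial B^{2n}(r),\lambda_n)$ is the discrete set $\{k\pi r^2 \mid k\ge 1\}$. Now use the cofinal family of Hamiltonians in $\mca{H}^\rest_\ad(B^{2n}(r),\lambda_n)$ whose slope $a_H$ at infinity tends to $+\infty$ while avoiding the action spectrum: for such $H$ with $a_H\in(k\pi r^2,(k+1)\pi r^2)$, the $1$-periodic orbits of $X_{H_t}$ consist of the constants in the interior together with orbits created near $\partial B^{2n}(r)$ corresponding to the closed Reeb orbits of period $<a_H$. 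The key point is the index computation: the constant orbits sit in degree $n$ (matching Theorem \ref{thm:delta} and $H_{*+n}(B^{2n}(r),\partial B^{2n}(r))$, which is $\Z_2$ in degree $2n$, i.e.\ $*=n$), while the nonconstant orbits have Conley–Zehnder index $n+1+2(\text{multiplicity-dependent shift})$ and in particular all indices $\mu_{\CZ}\ge n+1$ occur with the "right" multiplicities so that the positive-action part of $\HF_*(H)$ is acyclic. Concretely, I would cite the standard computation (as in Viterbo's original paper, and as reproved by Floer–Hofer–Wysocki and others) that for the ball $\SH_*^{<a}(B^{2n}(r),\lambda_n)$ stabilizes to $0$ as $a\to\infty$: one either perturbs the degenerate boundary orbits to Morse–Bott families and checks the resulting complex has homology concentrated so that the connecting maps in the long exact sequence \eqref{eq:abc-2} kill everything, or one invokes that $B^{2n}(r)$ is a subdomain of a standard symplectic ellipsoid/$\mathbb{C}^n$ and that $\SH_*(\mathbb{C}^n)=0$.

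The cleanest route inside this paper's framework: take $H\in\mca{H}_\ad$ with large slope $a_H$, and use Lemma \ref{lem:firstdef} to identify $\SH_*^{<a_H}(B^{2n}(r),\lambda_n)\cong\HF_*(H)$. For an explicit such $H$ depending only on $|z|$ and $C^2$-small in the interior, $\HF_*(H)$ can be computed by a direct Morse–Bott analysis of the $S^1$-families of Reeb orbits on each sphere $\partial B^{2n}(\rho)$; the outcome is that, after the standard cancellation of the two generators coming from each Reeb circle (the "min" and "max" of the perturbing Morse function on $S^1$, in adjacent degrees, connected by a differential), $\HF_*(H)=0$. Passing to the limit $a_H\to\infty$ via the monotone morphisms gives $\SH_*(B^{2n}(r),\lambda_n)=\varinjlim\HF_*(H)=0$.

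The hard part — and the place I would be most careful — is the Conley–Zehnder index computation for the degenerate (Morse–Bott) boundary orbits and the verification that the Floer differential indeed pairs up all generators. The degeneracy of the Hopf orbits means one cannot quote a generic non-degenerate index formula blindly; one must either use the degenerate index axioms of Theorem \ref{thm:CZ} and Lemma \ref{lem:degenerate} directly on the Hopf return map $\varphi_{\tau,\theta}$-type paths, or perturb carefully and track how the index splits. This index/cancellation step is exactly the technical heart, and it is why the theorem is attributed to \cite{Viterbo} as a known result rather than reproved from scratch; accordingly I would present the proof mostly by reduction to that reference, spelling out only the identification of the Reeb dynamics and the invocation of Theorem \ref{thm:isotopyinvariance} and Lemma \ref{lem:firstdef} to phrase it in the present setup.
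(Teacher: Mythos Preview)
Your proposal is correct and in fact more detailed than what the paper itself provides: the paper does not prove Theorem~\ref{thm:ball} at all but simply cites \cite{Viterbo}, section~4, example~1, treating the result as known. Your sketch of the Liouville-domain verification, the reduction by rescaling (minor quibble: the pullback of $\lambda_n$ under $z\mapsto r^{-1}z$ is $r^{-2}\lambda_n$, not $r^2\lambda_n$, but the isotopy argument via Theorem~\ref{thm:isotopyinvariance} goes through the same way), and the outline of the Morse--Bott index computation for the Hopf orbits are all on target, and you correctly identify that the cancellation of generators is the technical heart and appropriately defer it to the same reference the paper invokes.
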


For proofs, see proposition 1.4 in \cite{Viterbo} for theorem \ref{thm:delta}, 
theorem 1.7 in \cite{Viterbo} for theorem \ref{thm:isotopyinvariance}, and 
section 4, example 1 in \cite{Viterbo} for theorem \ref{thm:ball}.

Note that theorem \ref{thm:isotopyinvariance} does not hold for truncated symplectic homology.
However, the following result holds:

\begin{thm}\label{thm:truncated}
Let $(X,\lambda)$, $(X,\lambda')$ be Liouville domains, and assume that $d\lambda=d\lambda'$.
Then, for any $a \in (0,\infty]$, there exists a canonical isomorphism
$\psi^{<a}: \SH_*^{<a}(X,\lambda) \to \SH_*^{<a}(X,\lambda')$.
Moreover, for any $0<a \le b \le \infty$, 
\[
\xymatrix{
\SH_*^{<a}(X,\lambda) \ar[r]^{\psi^{<a}}\ar[d] & \SH_*^{<a}(X,\lambda')\ar[d] \\
\SH_*^{<b}(X,\lambda) \ar[r]_{\psi^{<b}}& \SH_*^{<b}(X,\lambda')
}
\]
commutes.
\end{thm}

Theorem \ref{thm:truncated} is proved in the appendix.

\section{Symplectic capacity via symplectic homology}\label{sec:caps}

\begin{defn}\label{defn:capp}
Let $(X,\lambda)$ be a Liouville domain.
$\capp_S(X,\lambda)$ is defined by 
\[
\capp_S(X,\lambda):= \inf\bigl\{ a \in (0,\infty] \bigm{|} 
\text{$\SH_n^{<\delta(\partial X,\lambda)}(X,\lambda) \to \SH_n^{<a}(X,\lambda)$ vanishes} \bigr\}.
\]
\end{defn}

\begin{rem}\label{rem:viterbo}
The above capacity is introduced by C.Viterbo in \cite{Viterbo}, section 5.3.
\end{rem}

\begin{lem}\label{lem:capacity}
Let $(X,\lambda)$ be a Liouville domain.
\begin{enumerate}
\item[(1)] $\capp_S(X,a\lambda) = a  \cdot \capp_S(X,\lambda)$ for any $a \in (0,\infty)$.
\item[(2)] $\SH_*(X,\lambda)=0 \implies \capp_S(X,\lambda) < \infty$.
\item[(3)] $\capp_S(X,\lambda)$ depends only on $d\lambda$.
\end{enumerate}
\end{lem}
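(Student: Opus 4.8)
The statement to prove is Lemma~\ref{lem:capacity}, and both parts follow formally from Definition~\ref{defn:capp} together with the long exact sequence (\ref{eq:abc-2}) and the scaling behaviour of symplectic homology. I will treat the two parts separately.

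\emph{Part (1).} The plan is to track what happens to everything in Definition~\ref{defn:capp} under the rescaling $\lambda \mapsto a\lambda$. First I would observe that $(\hat{X}, \widehat{a\lambda}) = (\hat{X}, a\hat{\lambda})$, that the Liouville vector field is unchanged, and that the Reeb vector field on $(\partial X, a\lambda)$ is $a^{-1}$ times the Reeb vector field on $(\partial X, \lambda)$, so every periodic Reeb orbit has its period multiplied by $a$; in particular $\delta(\partial X, a\lambda) = a\,\delta(\partial X,\lambda)$. Next I would set up the correspondence on admissible Hamiltonians: if $H \in \mca{H}^{\rest}_{\ad}(X,\lambda)$ then $aH \in \mca{H}^{\rest}_{\ad}(X,a\lambda)$, with $a_{aH} = a\,a_H$, and the Hamiltonian vector field computed with respect to $d(a\hat\lambda)$ of $aH$ equals the Hamiltonian vector field of $H$ with respect to $d\hat\lambda$, so $\mca{P}(aH) = \mca{P}(H)$ with the same Conley--Zehnder indices, while the action rescales as $\mca{A}_{aH}^{(a\lambda)}(x) = a\,\mca{A}_H^{(\lambda)}(x)$. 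Choosing the almost complex structures to match (the compatibility condition is invariant under scaling the symplectic form by a positive constant, and the Floer equation is unchanged), one gets a canonical identification $\HF_*^{I}(H : (X,\lambda)) \cong \HF_*^{aI}(aH : (X,a\lambda))$ compatible with monotone morphisms and with the maps in (\ref{eq:abc-1}). Passing to the direct limit gives $\SH_*^{<t}(X,\lambda) \cong \SH_*^{<at}(X,a\lambda)$ for all $t$, intertwining the morphisms $\iota_t$ of Definition~\ref{defn:capp}; hence $\iota_t = 0$ for $(X,\lambda)$ if and only if $\iota_{at} = 0$ for $(X,a\lambda)$, and taking infima yields $\capp_S(X,a\lambda) = a\,\capp_S(X,\lambda)$.

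\emph{Part (2).} Suppose $\SH_*(X,\lambda) = \SH_*^{<\infty}(X,\lambda) = 0$. By Theorem~\ref{thm:delta}, $\SH_*^{<\delta(\partial X,\lambda)}(X,\lambda) \cong H_{*+n}(X,\partial X)$, which is nonzero (e.g.\ $H_{2n}(X,\partial X;\Z_2) \cong \Z_2$ since $X$ is a compact $2n$-manifold with boundary), but this is not actually needed. What I need is that the morphism $\iota_\infty : \SH_*^{<\delta(\partial X,\lambda)}(X,\lambda) \to \SH_*^{<\infty}(X,\lambda) = 0$ is the zero map; this is immediate because its target vanishes. Therefore $\infty \in \{ a \in (0,\infty] \mid \iota_a = 0\}$, so this set is nonempty and $\capp_S(X,\lambda) = \inf\{\dots\} \le \infty$. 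To conclude strict finiteness I would argue that the vanishing at $a = \infty$ already propagates to some finite level: since $\SH_*^{<\infty}(X,\lambda) = \varinjlim_a \SH_*^{<a}(X,\lambda)$, a class in the (finitely generated) group $\SH_*^{<\delta(\partial X,\lambda)}(X,\lambda) = H_{*+n}(X,\partial X)$ maps to zero in the colimit iff it already maps to zero in $\SH_*^{<a}(X,\lambda)$ for some finite $a$; picking $a$ large enough to kill each of the finitely many generators shows $\iota_a = 0$ for some finite $a$, hence $\capp_S(X,\lambda) < \infty$.

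\emph{Main obstacle.} Part (1) is essentially bookkeeping, but the one point that requires care is checking that the rescaling is compatible with \emph{all} the structure maps --- the Floer differential, the continuation maps, the connecting homomorphisms in (\ref{eq:abc-1}), and the direct-limit maps --- uniformly, so that the isomorphism $\SH_*^{<t}(X,\lambda) \cong \SH_*^{<at}(X,a\lambda)$ genuinely intertwines $\iota_t$ with $\iota_{at}$; this is where one must be slightly careful about how the almost complex structures and the various $r_0$'s are transported. For Part (2) the only subtlety is the colimit/finite-generation argument needed to pass from ``zero in $\SH_*^{<\infty}$'' to ``zero at a finite level,'' which relies on $H_{*+n}(X,\partial X;\Z_2)$ being finite-dimensional (true since $X$ is compact).
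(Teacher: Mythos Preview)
Your proposal is correct and is precisely the natural unpacking of what the paper means by ``follow at once from Definition~\ref{defn:capp}'': the paper gives no further proof, and your scaling argument for (1) and your colimit/finite-generation argument for (2) are the standard ways to make that sentence rigorous. Your flagged caveat about transporting the almost complex structures is the only genuine bookkeeping point, and it is handled by noting that the Floer data and $C^0$-bounds only see $J$ on a fixed compact set, so one may modify $J$ on the cylindrical end to lie in $\mca{J}(X,a\lambda)$ without changing any moduli space.
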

\begin{proof}
(1) and (2) are immediate from the definition.
(3) follows from theorem \ref{thm:truncated}.
\end{proof}

The goal of this section is to prove the following three properties of $\capp_S$.

\begin{thm}\label{thm:monotonicity}
Let $(X,\lambda)$ be a Liouville domain, and
$X_{\inn}$ be a submanifold of $X$ of codimension $0$.
If $(X_{\inn}, \lambda)$ is a Liouville domain, then 
$\capp_S(X_{\inn}, \lambda) \le \capp_S (X,\lambda)$.
\end{thm}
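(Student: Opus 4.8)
The plan is to show that the inclusion $X_{\inn} \hookrightarrow X$ induces, on the relevant truncated symplectic homologies, a commuting diagram that transports the vanishing of $\iota_a$ from $X$ to $X_{\inn}$. Concretely, I would use the fact that $\SH_*^{<a}$ of a Liouville domain can be computed as a direct limit of $\HF_*^{<a}$ over admissible Hamiltonians, and I would set up a cofinal family of Hamiltonians on $\hat{X}$ that are ``small and negative'' on $X_{\inn}$, linear with a controlled slope on the collar $\partial X_{\inn} \times [1,\infty)$ sitting inside $\hat{X}$, and then pushed up to a large constant (or to a second linear piece) outside a neighborhood of $X$. Restricting such a Hamiltonian to $\hat{X}_{\inn} \subset \hat X$ (the completion of $(X_{\inn},\lambda)$, which embeds into $\hat X$ since $\lambda$ restricts to $\lambda$) gives an admissible Hamiltonian on $\hat{X}_{\inn}$. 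The key point is that all 1-periodic orbits of such a Hamiltonian with action below $a$ lie inside $X_{\inn}$ (the ``non-constant'' orbits live on the collar $\partial X_{\inn}\times\{r\}$ at radii dictated by the slope, and the $C^0$-bound Lemma \ref{lem:C0bound-1} and Lemma \ref{lem:C0bound-2} confine Floer trajectories to a compact region), so the Floer chain complexes — and their action-truncated quotients — computed in $\hat X$ and in $\hat X_{\inn}$ agree for the generators and differentials relevant to the window $(-\infty,a)$.

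The heart of the argument is then a naturality/functoriality statement: there is a restriction map $\SH_*^{<a}(X,\lambda)\to \SH_*^{<a}(X_{\inn},\lambda)$ (or, more precisely, an identification of the two as the same limit over the cofinal family just described), compatible with the monotone morphisms as $a$ varies and with the isomorphisms of Theorem \ref{thm:delta} for $a=\delta$. Granting this, consider the square
\[
\xymatrix{
\SH_*^{<\delta(\partial X,\lambda)}(X,\lambda)\ar[r]^-{\iota_a^X}\ar[d] & \SH_*^{<a}(X,\lambda)\ar[d]\\
\SH_*^{<\delta(\partial X_{\inn},\lambda)}(X_{\inn},\lambda)\ar[r]^-{\iota_a^{X_{\inn}}} & \SH_*^{<a}(X_{\inn},\lambda)
}
\]
where the left vertical map, under the identifications with $H_{*+n}(X,\partial X)$ and $H_{*+n}(X_{\inn},\partial X_{\inn})$ from Theorem \ref{thm:delta}, is the map induced by inclusion of pairs and in particular is \emph{surjective} (indeed it sends the fundamental class to the fundamental class — this is where ``$X_{\inn}$ is a codimension-$0$ submanifold that is itself a Liouville domain'' is used). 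Then $\iota_a^X=0$ forces $\iota_a^{X_{\inn}}\circ(\text{left vertical})=(\text{right vertical})\circ\iota_a^X=0$, and surjectivity of the left vertical map gives $\iota_a^{X_{\inn}}=0$. Taking the infimum over such $a$ yields $\capp_S(X_{\inn},\lambda)\le a$ for every $a$ with $\iota_a^X=0$, hence $\capp_S(X_{\inn},\lambda)\le\capp_S(X,\lambda)$.

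The main obstacle I anticipate is making the comparison of Floer complexes rigorous: one must choose the cofinal Hamiltonians so that restricting them to $X_{\inn}$ is genuinely admissible (non-degenerate orbits, slope not in $\tau(\partial X_{\inn},\lambda)$) while simultaneously being admissible on $X$, and one must check that the $C^0$-confinement really does keep all low-action orbits and the connecting Floer cylinders inside $X_{\inn}$, so that the restriction really is a chain isomorphism in the relevant action window and not merely a chain map. A clean way to organize this is to first handle the case where $\lambda$ extends the Liouville field transversally across $\partial X_{\inn}$ (so $\partial X_{\inn}$ sits as a contact-type hypersurface in $\hat X$), reduce to it via the isotopy invariance of Theorem \ref{thm:isotopyinvariance} and Theorem \ref{thm:dlambda} if necessary, and then invoke the standard ``neck-stretching is unnecessary here because the Hamiltonian is already linear on the neck'' argument. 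The commutativity of the square and the identification of the left vertical map with the topological restriction map is then a formality via Theorem \ref{thm:delta} and Lemma \ref{lem:firstdef}.
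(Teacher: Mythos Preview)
Your strategy is essentially the paper's: build special Hamiltonians on $\hat X$ that are linear on a collar of $\partial X_{\inn}$, identify the relevant orbits inside $X_{\inn}$ with orbits of an admissible Hamiltonian on $\hat X_{\inn}$, and compare the resulting Floer complexes via a commuting square in which the ``small-$\delta$'' side is controlled by degree-$n$ Morse theory. So the architecture is right.

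There are two concrete gaps. First, the confinement of Floer cylinders to $X_{\inn}$ is the crux, and Lemmas~\ref{lem:C0bound-1} and~\ref{lem:C0bound-2} do not provide it: those lemmas only say trajectories stay in a compact subset of $\hat X$, not that they cannot cross $\partial X_{\inn}$. What the paper actually uses is a separate maximum-principle statement (Lemma~\ref{lem:convexbdry}): if $H_{s,t}(z,r)=a(s)(r-\varepsilon)$ on a collar $\partial X_{\inn}\times(\varepsilon^{2/3},\varepsilon^{1/3})$ and the almost complex structure satisfies $dr\circ J_{s,t}=-\lambda$ there, then any Floer cylinder with ends in $X_{\inn}(\varepsilon^{1/3})$ stays in $X_{\inn}(\varepsilon^{1/3})$. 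This requires choosing $J$ with this specific cylindrical form on the inner neck, not just generically; your proposal does not arrange this. Second, the restriction is \emph{not} a chain isomorphism in the positive-action window: an orbit lying in $X_{\inn}$ may be contractible in $X$ but not in $X_{\inn}$. The paper's map $\psi_c:C_*^{>0}(K'_c)\to C_*(H'_c)$ sends such orbits to zero and is only an epimorphism; what is proved (and what suffices) is that $\psi_\delta$ is an isomorphism in degree $n$ because at small slope both sides are Morse-theoretic and the fundamental class goes to the fundamental class. So you should aim for ``surjective chain map, isomorphism in degree $n$ at the $\delta$-level'' rather than ``chain isomorphism in the action window''.
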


\begin{thm}\label{thm:reebchord}
Let $(X,\lambda)$ be a Liouville domain.
If $\capp_S(X,\lambda)<\infty$, there exists $x \in \mca{P}_0(\partial X,\lambda)$ such that 
$\tau(x) = \capp_S(X,\lambda)$, 
$\mu_{\CZ}(x) \le n+1$.
\end{thm}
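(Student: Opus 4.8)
The plan is to realize $\capp_S(X,\lambda)$ as the action of a periodic Reeb orbit by a careful analysis of the limiting behavior of the monotone maps near the critical parameter $a_0:=\capp_S(X,\lambda)$. First I would choose a sequence of admissible Hamiltonians $H_k \in \mca{H}^\rest_\ad(X,\lambda)$ which are $C^0$-small negative constants on $X$ and linear of slope $a_k$ outside, with $a_k \uparrow a_0$ but $a_k \notin \tau(\partial X,\lambda)$, together with a companion sequence with slopes $a'_k \downarrow a_0$. The point is that for slopes just below $a_0$ the map $\iota_{a}$ is nonzero while for slopes above it vanishes; using Lemma~\ref{lem:firstdef} this is a statement about the map $\HF_*(H) \to \HF_*(H')$ becoming zero as the slope crosses $a_0$. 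The standard picture for such ``linear at infinity'' Hamiltonians is that the generators of $\HF_*(H)$ split into the ``constant'' orbits near $X$ (which compute $H_{*+n}(X,\partial X)$ by Theorem~\ref{thm:delta}) and ``non-constant'' orbits coming in $S^1$-families (after a small time-dependent perturbation, pairs of non-degenerate orbits) near each periodic Reeb orbit of period $<a_H$, with action approximately the Reeb period and Conley–Zehnder index controlled by Theorem~\ref{thm:CZ} (the transverse Reeb data contributes, and the constant/index shift places these in degrees near $\mu_{\RS}$ of the Reeb orbit). The vanishing of $\iota_{a_0}$ forces the fundamental class in $\SH_*^{<\delta}\cong H_{2n}(X,\partial X)$ in degree $*=n$ to die, and it can only die by mapping into a generator coming from a Reeb orbit, which by the long exact sequence (\ref{eq:abc-2}) and the action filtration must have action exactly $a_0$.

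Concretely, the key steps are: (1) Set up the two sequences of Hamiltonians and use Lemma~\ref{lem:firstdef} to translate the definition of $\capp_S$ into: the image of the generator of $H_n(X,\partial X)$ under $\HF_*(H) \to \HF_*^{<a_H}(\text{large slope})$ vanishes precisely as the slope passes $a_0$. (2) Identify, via the exact sequence (\ref{eq:abc-2}) applied to intervals $[0,\delta)$, $[0,a)$, $[\delta, a)$, that the generator must hit a class in $\SH_*^{[\delta,a)}$; as $a \downarrow a_0$ this localizes the action of the responsible orbit to be exactly $a_0$, so $a_0 \in \tau(\partial X,\lambda)$ and it is realized by some $x \in \mca{P}_0(\partial X,\lambda)$ (contractibility because the Floer cylinder providing the connecting map is contractible in $\hat X$). (3) Compute the degree: the connecting homomorphism $\SH_n^{[\delta,a)} \to \SH_{n-1}^{[0,\delta)}$ — wait, rather the boundary map lands so that the orbit $x$ sits in degree $n$ or $n+1$ (its Floer generator must be in the image-relevant degrees adjacent to $n$); combined with the relation between $\mu_{\CZ}$ of the perturbed orbit and $\mu_{\RS}(x)$ (which differ by at most a bounded amount depending only on $n$, and for the ``lowest action'' reachable generator one gets $\mu_{\CZ}(x) \le n+1$), conclude $\mu_{\CZ}(x) \le n+1$. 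I would use the semicontinuity Lemma~\ref{lem:i-1} to pass the index estimate through the limit $a_k \to a_0$ while the perturbations shrink.

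The main obstacle is step (3): controlling the Conley–Zehnder index of the Reeb orbit $x$ that realizes the capacity. The issue is that as one perturbs a degenerate $S^1$-family of Hamiltonian orbits into non-degenerate ones, the index spreads over a small range, and one must argue that the specific generator whose appearance kills the fundamental class lies in degree $\le n+1$ (equivalently $n$, depending on conventions) and that the associated Reeb orbit therefore has $\mu_{\CZ} \le n+1$. This requires keeping careful track of: the grading shift in Theorem~\ref{thm:delta} ($\SH^{<\delta}_* \cong H_{*+n}(X,\partial X)$, so the unit is in $*=n$), the fact that the connecting map in (\ref{eq:abc-2}) drops degree by one, and the relation $\mu_{\CZ}(\text{perturbed orbit}) = \mu_{\RS}(x) + (\text{correction in } \{0, \tfrac12 \dim\})$; putting these together with Lemma~\ref{lem:degenerate} to handle the degenerate limiting path. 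I expect that a clean way to organize this is to first prove the statement for a single auxiliary Hamiltonian with slope slightly above $a_0$ using the finiteness $\sharp\mca{P}(H)<\infty$, extract the orbit and its index bound there, and then let the slope decrease to $a_0$, applying Lemma~\ref{lem:i-1} and compactness of $\tau(\partial X,\lambda)$ to produce the limiting Reeb orbit with the desired period and index.
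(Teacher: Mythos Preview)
Your overall strategy---localize the action via the long exact sequence, identify the responsible Floer generator with a contractible Reeb orbit, and control its index---is exactly the paper's. The difficulty you correctly flag in step~(3) is resolved by an organizational move you are missing: the paper first proves the theorem under the additional hypothesis that all contractible Reeb orbits on $(\partial X,\lambda)$ are \emph{non-degenerate} (Lemma~\ref{lem:reebchord}), and only afterwards treats the general case.

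In the non-degenerate case the $S^1$-families of Hamiltonian orbits are Morse--Bott, so the perturbation of \cite{CFHW} breaks each family corresponding to $x\in\mca{P}_0(\partial X,\lambda)$ into exactly two non-degenerate orbits $x^\pm$ with $\mu_{\CZ}(x^\pm)=\mu_{\CZ}(x)+\tfrac{1\pm1}{2}$ and action within $\varepsilon/2$ of $\tau(x)$; Corollary~\ref{cor:CZ} (which uses the convexity $\partial_r^2 g>0$ of the radial profile) is what pins the index of the unperturbed family to $\mu_{\CZ}(x)$ rather than $\mu_{\CZ}(x)-1$. The exact sequence then forces a generator in degree $n+1$ with action in $[c-\varepsilon/2,c+\varepsilon/2)$, hence $\mu_{\CZ}(x)\in\{n,n+1\}$; since Reeb periods are isolated in the non-degenerate case, letting $\varepsilon\to 0$ gives $\tau(x)=c$ on the nose.

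For the general case the paper does \emph{not} attempt to control indices through Hamiltonian perturbations of possibly degenerate Reeb families---your worry about ``index spread'' there is justified and would not be settled by your proposed limit in the slope. Instead it perturbs the \emph{contact form}: replace $\partial X$ by nearby graphs $\Sigma_{f_m}$ with $f_m\to 1$ in $C^2$ and all Reeb orbits on $(\Sigma_{f_m},\hat\lambda)$ non-degenerate, apply the lemma to each $(D_{f_m},\hat\lambda)$ (whose capacities converge to $\capp_S(X,\lambda)$ by Theorem~\ref{thm:monotonicity}), and extract a limiting Reeb orbit on $\partial X$ using Arzel\`a--Ascoli and Lemma~\ref{lem:i-1}. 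This two-step reduction is the missing ingredient that makes your step~(3) clean.
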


\begin{thm}\label{thm:covering}
Let $\pi \colon Y \to X$ be a covering map such that $\deg \pi < \infty$.
If $(X,\lambda)$ is a Liouville domain, then 
$(Y,\pi^*\lambda)$ is a Liouville domain, and 
$\capp_S(Y,\pi^*\lambda) \le \capp_S(X,\lambda)$.
\end{thm}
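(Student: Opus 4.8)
The plan is to pull back the whole Floer-theoretic package along the covering map $\pi$ and compare filtered symplectic homologies. First I would observe that $(Y,\pi^*\lambda)$ is a Liouville domain: since $\pi$ is a local diffeomorphism, $d(\pi^*\lambda)=\pi^*d\lambda$ is symplectic, and the Liouville vector field of $\pi^*\lambda$ is the $\pi$-lift of $Z$, which points strictly outward on $\partial Y=\pi^{-1}(\partial X)$. Moreover $\pi$ extends to a covering $\hat\pi\colon\hat Y\to\hat X$ of completions identifying $\partial Y\times(0,\infty)$ with $\pi^{-1}(\partial X\times(0,\infty))$ compatibly with the $r$-coordinate, and $c_1(TY)=\pi^*c_1(TX)=0$. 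The key point is that the period spectrum satisfies $\tau(\partial Y,\pi^*\lambda)\subseteq\tau(\partial X,\lambda)$ (a Reeb orbit downstairs lifts, after going around $\deg\pi$-many times if necessary, but any Reeb orbit upstairs projects to one of the same period), so $\delta(\partial Y,\pi^*\lambda)\ge\delta(\partial X,\lambda)$.

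Next I would set up the comparison map on filtered symplectic homology. For $H\in\mca{H}^\rest_\ad(X,\lambda)$ the pullback $\pi^*H:=H\circ\hat\pi$ lies in $\mca{H}^\rest(Y,\pi^*\lambda)$ with $a_{\pi^*H}=a_H$ and $b_{\pi^*H}=b_H$; after a small perturbation (compactly supported, hence not changing $a_H$ and keeping $a_H\notin\tau(\partial Y,\pi^*\lambda)$ if $a_H\notin\tau(\partial X,\lambda)$) we may assume $\pi^*H$ (or a nearby Hamiltonian) is admissible on $Y$. A contractible $1$-periodic orbit $\tilde x$ of $X_{\pi^*H}$ on $\hat Y$ projects to a contractible $1$-periodic orbit $x=\hat\pi\circ\tilde x$ of $X_H$ on $\hat X$ with $\mca{A}_{\pi^*H}(\tilde x)=\mca{A}_H(x)$ and $\mu_{\CZ}(\tilde x)=\mu_{\CZ}(x)$, the latter because $\hat\pi$ induces an isomorphism of symplectic vector bundles $\tilde x^*T\hat Y\cong x^*T\hat X$ and the linearized flows correspond. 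Pulling back a generic $J\in\mca{J}(X,\lambda)$ likewise gives an admissible almost complex structure on $Y$, and since $\hat\pi$ is a local diffeomorphism, Floer cylinders for $(\pi^*H,\pi^*J)$ on $\hat Y$ are exactly the lifts of Floer cylinders for $(H,J)$ on $\hat X$ whose endpoints lie over the chosen orbits; the $C^0$-bound (Lemma \ref{lem:C0bound-1}) transfers since a Floer cylinder on $\hat Y$ stays in $\hat\pi^{-1}(B)$, which is compact as $\deg\pi<\infty$. Summing over the (finitely many, because $\deg\pi<\infty$) lifts of each orbit, the projection $\hat\pi$ induces a chain map $C_*^I(\pi^*H)\to C_*^I(H)$ commuting with the monotone continuation maps, and hence, passing to the direct limit over $H$ (cofinally comparing $\mca{H}^\rest_\ad(Y)$-Hamiltonians dominated by pullbacks), a map $\SH_*^I(Y,\pi^*\lambda)\to\SH_*^I(X,\lambda)$ compatible with the long exact sequences \eqref{eq:abc-2} and, crucially, commuting with the structural morphisms $\iota_a$ in Definition \ref{defn:capp}. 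Using $\delta(\partial Y,\pi^*\lambda)\ge\delta(\partial X,\lambda)$ together with Theorem \ref{thm:delta} one checks the square
\[
\xymatrix{
\SH_*^{<\delta(\partial Y)}(Y,\pi^*\lambda)\ar[r]^-{\iota^Y_a}\ar[d] & \SH_*^{<a}(Y,\pi^*\lambda)\ar[d]\\
\SH_*^{<\delta(\partial X)}(X,\lambda)\ar[r]^-{\iota^X_a}& \SH_*^{<a}(X,\lambda)
}
\]
commutes and the left vertical arrow is surjective (it is the map $H_{*+n}(Y,\partial Y)\to H_{*+n}(X,\partial X)$ induced by $\pi$ composed with a transfer-type map — in fact I would arrange the comparison so that this arrow is nonzero on the relevant fundamental class, which is all that is needed); if $\iota^X_a=0$ then $\iota^Y_a$ composed with a surjection is zero, forcing $\iota^Y_a=0$. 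Therefore $\capp_S(Y,\pi^*\lambda)\le\capp_S(X,\lambda)$.

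The main obstacle is the transversality/perturbation bookkeeping needed to make $\pi^*H$ and $\pi^*J$ honestly admissible and regular on $Y$: a generic choice downstairs need not pull back to a generic choice upstairs (the deck group acts on the lifted data), so one must either choose $(H,J)$ downstairs in a class guaranteeing upstairs genericity, or perturb upstairs by a $\deck$-noninvariant amount while controlling the effect on the action filtration and on the continuation maps. A secondary subtlety is ensuring that the $\SH$-level map is well-defined on the direct limits — i.e. that pullbacks of a cofinal family downstairs remain cofinal enough upstairs — which uses $\deg\pi<\infty$ in an essential way (an infinite cover would fail because $\hat\pi^{-1}(B)$ need not be compact and the count of lifts would be infinite). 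Once these points are handled the argument is the expected naturality chase; I expect the perturbation argument to occupy the bulk of the write-up.
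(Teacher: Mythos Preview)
Your diagram chase is set up in the wrong direction, and as written the deduction fails. You build a map $C_*^I(\pi^*H)\to C_*^I(H)$ (the projection direction, $Y\to X$), draw vertical arrows $\SH_*^I(Y)\to\SH_*^I(X)$, and then claim that surjectivity of the \emph{left} vertical forces $\iota_a^Y=0$ once $\iota_a^X=0$. But commutativity of your square only yields $(\text{right vertical})\circ\iota_a^Y=0$; to conclude $\iota_a^Y=0$ you would need the \emph{right} vertical to be injective, and there is no reason for that. (In fact over $\Z_2$ the pushforward $H_{2n}(Y,\partial Y)\to H_{2n}(X,\partial X)$ multiplies the fundamental class by $\deg\pi$, so even your left-vertical surjectivity fails for even-degree covers.)

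The paper runs the argument the other way. It defines the \emph{transfer} chain map
\[
\psi_H:C_*(H)\to C_*(\bar H),\qquad [x]\mapsto\sum_{y\in\hat\pi^{-1}(x)}[y],
\]
giving vertical arrows $\HF_*(H:X)\to\HF_*(\bar H:Y)$. For a sufficiently small time-independent Morse Hamiltonian $H_-$ both $\HF_n(H_-)$ and $\HF_n(\bar H_-)$ are $\Z_2$, generated by the sum of index-$0$ critical points, and $\psi_{H_-}$ carries generator to generator, hence is an isomorphism in degree $n$. With the arrows pointing $X\to Y$, commutativity gives $\iota_a^Y\circ\psi_{H_-}=\psi_{H_+}\circ\iota_a^X=0$, and now surjectivity of $\psi_{H_-}$ really does force $\iota_a^Y=0$. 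Your phrase ``summing over the lifts of each orbit'' suggests you had this transfer in mind, but you then wrote the target as $C_*^I(H)$ and drew the arrows downward; that mismatch is exactly where the argument breaks.

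Separately, your ``main obstacle'' is not an obstacle. Because $\hat\pi$ is a local diffeomorphism, $y\in\mca{P}(\bar H)$ is nondegenerate iff $\hat\pi(y)$ is, and $a_{\bar H}=a_H\notin\tau(\partial X,\lambda)\supseteq\tau(\partial Y,\pi^*\lambda)$; so $H\in\mca{H}_\ad(X,\lambda)$ already implies $\bar H\in\mca{H}_\ad(Y,\pi^*\lambda)$ with no perturbation needed. Likewise $d\hat\pi\circ D_u^*=D_{\hat\pi\circ u}^*\circ d\hat\pi$ shows $J$ is regular for $H$ iff $\bar J=\hat\pi^*J$ is regular for $\bar H$. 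The deck group does act on the pulled-back data, but that causes no problem: regularity is a pointwise surjectivity condition on each individual Floer cylinder, and each cylinder upstairs corresponds bijectively to one downstairs. So the perturbation bookkeeping you expected to dominate the write-up disappears entirely; the proof is the transfer chain map plus the diagram chase, and it is short.
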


\begin{rem}
Conley-Zehnder index for elements in $\mca{P}_0(\partial X,\lambda)$, which appears on the statement of 
theorem \ref{thm:reebchord} have not been defined. It is defined at the beginning of section 3.3.
\end{rem}

It seems to the author that various variants of results semilar to theorems \ref{thm:monotonicity},
\ref{thm:reebchord} are known or expected to hold by experts.
We give its proof below for the sake of completeness since the author is unable to find 
their proofs in the literature.
On the other hand, theorem \ref{thm:covering} is new, though its proof is not very difficult.
Theorem \ref{thm:covering} plays a crucial role in the proof of theorem \ref{thm:mainestimate}, which is 
the main result in section 4.

\subsection{Proof of theorem \ref{thm:monotonicity}}
First we prove the following lemma.

\begin{lem}\label{lem:convexbdry}
Let $(X,\lambda)$ and $X_{\inn}$ be as in theorem \ref{thm:monotonicity}, 
and $\varepsilon \in (0,1)$.
Let $H_-, H_+ \in \mca{H}_\ad(X,\lambda)$, 
$(H_s)_{s \in \R}$ be a family of elements of $\mca{H}(X,\lambda)$, 
$(J_s)_{s \in \R}$ be a family of elements of $\mca{J}(X,\lambda)$.
Assume that they satisfy the following conditions:
\begin{enumerate}
\item[(i)] There exists $s_0>0$ such that $H_s = \begin{cases}
                                                H_- &(s \le -s_0) \\
                                                H_+ &(s \ge s_0) 
                                                \end{cases}$.
\item[(ii)] $\partial_sH_{s,t}(x) \ge 0$ for any $(s,t) \in \R \times \R/\Z$ and $x \in \hat{X}$.
\item[(iii)]There exists $a \in C^\infty(\R)$ such that 
$H_{s,t}(z,r)=a(s)(r-\varepsilon)$ for $(z,r) \in \partial X_{\inn} \times [\varepsilon^{2/3}, \varepsilon^{1/3}]$.
\item[(iv)] $dr \circ J_{s,t}=-\lambda$ on $\partial X_{\inn} \times [\varepsilon^{2/3},\varepsilon^{1/3}]$.
\end{enumerate}
Assume $x_- \in \mca{P}(H_-)$ and $x_+ \in \mca{P}(H_+)$ satisfy
$x_-(\R/\Z), x_+(\R/\Z) \subset X_\inn(\varepsilon^{1/3})$.
Then, for any $u \in \hat{\mca{M}}\bigl(x_-, x_+:(H_s,J_s)_s\bigr)$, 
$u(\R \times \R/\Z) \subset X_\inn(\varepsilon^{1/3})$.
\end{lem}
\begin{proof}
First notice that $x_-(\R/\Z), x_+(\R/\Z)$ are contained in $X_{\inn}(\varepsilon^{2/3})$, since 
$(H_\pm)_t(z,r)=a(\pm s_0)(r-\varepsilon)$ for 
$(z,r) \in \partial X_{\inn} \times [\varepsilon^{2/3},\varepsilon^{1/3}]$ and
$a(\pm s_0) \notin \mca{\tau}(\partial X_{\inn}, \lambda)$
(this follows from (i), (iii) and $H_\pm \in \mca{H}_\ad(X,\lambda)$).
We will prove that for any $u \in \hat{\mca{M}}\bigl(x_-, x_+:(H_s,J_s)_s\bigr)$, 
$u(\R \times \R/\Z) \subset X_{\inn}(\varepsilon^{1/3})$.
If this is not true, for any $r_0 \in (\varepsilon^{2/3},\varepsilon^{1/3})$,
$D_{r_0}:=\R \times \R/\Z \setminus u^{-1}\bigl(\interior X_{\inn}(r_0)\bigr)$ is  non-empty.
Note that $D_{r_0}$ is compact since $x_\pm(\R/\Z) \subset X_{\inn}(\varepsilon^{2/3})$.
For generic $r_0$, $u$ is transverse to $\partial X_{\inn} \times \{r_0\}$, hence we may assume that 
$D_{r_0}$ is a compact surface with boundary.

It is easy to verify that $\partial_s u$ is not constantly $0$ on $D_{r_0}$. Hence
\[
\int_{D_{r_0}} |\partial_s u|_{J_{s,t}}^2\,dsdt >0.
\]
Since $u$ satisfies the Floer equation $\partial_s u - J_{s,t}\bigl(\partial_t u - X_{H_{s,t}}(u)\bigr)=0$, 
\begin{align*}
&\int_{D_{r_0}} |\partial_s u|_{J_{s,t}}^2 + \partial_sH_{s,t}(u)\,dsdt = 
\int_{D_{r_0}} d\hat{\lambda}(\partial_t u, \partial_s u) + dH_{s,t}(\partial_s u) + \partial_sH_{s,t}(u)\,dsdt \\
&=\int_{\partial D_{r_0}} -u^*\lambda + H_{s,t}(u)\,dt.
\end{align*}
Since $u(\partial D_{r_0}) \subset \partial X_{\inn} \times \{r_0\}$, we get by (iii)
\[
(s,t) \in \partial D_{r_0} \implies H_{s,t}\bigl(u(s,t)\bigr)=a(s)(r_0-\varepsilon), \quad \lambda\bigl(X_{H_{s,t}}(u(s,t))\bigr)=a(s)r_0.
\]
Therefore
\[
\int_{\partial D_{r_0}} -u^*\lambda + H_{s,t}(u)\,dt 
=\int_{\partial D_{r_0}} \lambda(X_{H_{s,t}} \otimes dt - du) - \varepsilon \int_{\partial D_{r_0}}a(s)\,dt.
\]
On the other hand, the Floer equation is equivalent to 
\[
J_{s,t} \circ (X_{H_{s,t}} \otimes dt - du) = -(X_{H_{s,t}} \otimes dt - du) \circ j,
\]
where $j$ is the complex structure on $\R \times \R/\Z$, defined by $j(\partial_s)=\partial_t$.
Therefore by (iv), 
\[
\int_{\partial D_{r_0}} \lambda(X_{H_{s,t}} \otimes dt - du) 
=\int_{\partial D_{r_0}} \lambda\bigl(J_{s,t} \circ (X_{H_{s,t}} \otimes dt - du) \circ j\bigr)
=\int_{\partial D_{r_0}} dr(X_{H_{s,t}} \otimes dt - du) \circ j .
\]
$dr(X_{H_{s,t}})=0$ on $\partial X_{\inn} \times \{r_0\}$.
Moreover, if $V$ is a vector tangent to $\partial D_{r_0}$, and positive with respect to the boundary orientation, 
then $jV$ points inwards, hence $dr\bigl(du(jV)\bigr) \ge 0$. Therefore,
\[
\int_{\partial D_{r_0}} \lambda(X_{H_{s,t}} \otimes dt - du) \le 0.
\]
Finally, 
\[
\int_{D_{r_0}} |\partial_s u|_{J_{s,t}}^2 + \partial_s H_{s,t}(u)\,dsdt
\le -\varepsilon \int_{\partial D_{r_0}} a(s)\,dt 
= -\varepsilon \int_{D_{r_0}} \partial_s a(s)\,dsdt.
\]
Since $\partial_s H_{s,t} \ge 0$ and $\partial_s a \ge 0$ by (ii), 
this implies
\[
\int_{D_{r_0}} |\partial_s u|_{J_{s,t}}^2 dsdt \le 0.
\]
This is a contradiction.
\end{proof}

We prove theorem \ref{thm:monotonicity}.

\begin{proof}
We prove that, if $a$ satisfies 
$a>\capp_S(X,\lambda)$ and $a \notin \tau(\partial X,\lambda) \cup \tau(\partial X_{\inn},\lambda)$, 
then $a>\capp_S(X_{\inn},\lambda)$.
This implies $\capp_S(X,\lambda) \ge \capp_S(X_{\inn},\lambda)$, 
since $\tau(\partial X,\lambda)$ and $\tau(\partial X_\inn,\lambda)$ are
null sets.
In the rest of this proof, we assume that $X$ and $X_\inn$ are connected. 
General case follows at once from this particular case.

Take $\varepsilon>0$ so that $\bigl[a(1-\varepsilon), a\bigr]$ is disjoint from $\tau(\partial X,\lambda)$ and $\tau(\partial X_{\inn},\lambda)$.
For any $c>0$, define $H_c: \hat{X_{\inn}} \to \R$ and $K_c: \hat{X} \to \R$ as follows:
\begin{align*}
H_c(x) &=\begin{cases}
          0         &\bigl(x \in X_{\inn}(\varepsilon)\bigr) \\
         c(r-\varepsilon)&\bigl(x=(z,r) \in \partial X_{\inn} \times [\varepsilon,\infty)\bigr) \\
         \end{cases}, \\
K_c(x) &= \begin{cases}
          H_c(x)    &(x \in X_{\inn}) \\
        c(1-\varepsilon)    &(x \in X \setminus X_{\inn}) \\
        c(r-\varepsilon) &\bigl(x=(z,r) \in \partial X \times [1,\infty) \bigr).
         \end{cases}
\end{align*}

Take $\delta>0$ so small that $\delta< \min\bigl\{ \delta(\partial X,\lambda), \delta(\partial X_{\inn},\lambda)\bigr\}$.
Then, perturbing $K_a,K_\delta$ and $H_a,H_\delta$ respectively, we can take 
$K'_a, K'_\delta \in \mca{H}_{\ad}(X,\lambda)$ and $H'_a, H'_\delta \in \mca{H}_{\ad}(X_{\inn},\lambda)$ which satisfy the
following properties:
\begin{enumerate}
\item[(i)] For $c \in \{\delta,a\}$, the following holds:
\begin{enumerate}
\item $(H'_c)_t=H_c$ on $\partial X_{\inn} \times [\varepsilon^{2/3},\infty)$ for any $t \in \R/\Z$.
\item $(K'_c)_t=K_c$ on $\partial X \times [2,\infty)$ for any $t \in \R/\Z$.
\item $(K'_c)_t=(H'_c)_t$ on $X_{\inn}(\varepsilon^{1/3})$ for any $t \in \R/\Z$.
\item For $x \in \mca{P}(K'_c)$, $\mca{A}_{K'_c}(x)>0$ if and only if $x(\R/\Z) \subset X_{\inn}(\varepsilon^{1/3})$.
\end{enumerate}
\item[(ii)] $(K'_\delta)_t \le (K'_a)_t$ and $(H'_\delta)_t \le (H'_a)_t$ for any $t \in \R/\Z$.
\item[(iii)] $H'_\delta$ and $K'_\delta$ are time independent. i.e. There exist $h \in C^\infty(\hat{X_\inn})$ and $k \in C^\infty(\hat{X})$ 
such that $(H'_\delta)_t=h$, $(K'_\delta)_t=k$. Moreover, 
$\mca{P}(H'_\delta) = \Crit(h)$, $\mca{P}(K'_\delta)=\Crit(k)$ and 
if $p \in \Crit (k)$ satisfies $\ind\,p=0$, then $p \in X_\inn$.
\end{enumerate}

Let $c \in \{\delta,a\}$.
Then, by (i)-(c) and (i)-(d), if $x \in \mca{P}(K'_c)$ satisfies $\mca{A}_{K'_c}(x)>0$, $x$ can be identified with a 
solution of $\partial_t x = X_{(H'_c)_t}(x)$.
We define $\psi_c: C^{>0}_*(K'_c) \to C_*(H'_c)$ by 
\[
\psi_c[x] = \begin{cases}
             [x] &(\text{$x$ is contractible in $X_{\inn}$}) \\
              0  &(\text{otherwise})
             \end{cases}.
\]
'if' part of (i)-(d) implies that $\psi_c$ is an epimorphism.

Let $J=(J_t)_{t \in \R/\Z}$ be a family of almost complex structures on $X_{\inn}$, such that 
each $J_t$ is compatible with $d\lambda$ and satisfies $dr \circ J_t = -\lambda$ on $X_{\inn} \times [\varepsilon^{2/3},\varepsilon^{1/3}]$.

By (i)-(a) and (i)-(c), $(K'_c)_t(z,r)=c(r-\varepsilon)$ for $(z,r) \in X_{\inn} \times [\varepsilon^{2/3},\varepsilon^{1/3}]$.
Therefore, by lemma \ref{lem:convexbdry}, if we extend $J$ to $J^X \in \mca{J}(X,\lambda)$ and
$J^{X_{\inn}} \in \mca{J}(X_{\inn},\lambda)$, 
$\psi_c$ defines a chain map from 
$\bigl(C_*^{>0}(K'_c),\partial_{K'_c,J^X}\bigr)$ to
$\bigl(C_*(H'_c),\partial_{H'_c,J^{X_{\inn}}}\bigr)$.

It induces a morphism
\[
\HF_*(K'_c) \to \HF^{>0}_*(K'_c) \to \HF_*(H'_c).
\]
We will denote this morphism also by $\psi_c$.

It follows from lemma \ref{lem:convexbdry} and (ii) that
\begin{equation}\label{eq:monotonicity}
\xymatrix{
\HF_*(K'_\delta)\ar[r]^{\psi_\delta}\ar[d]&\HF_*(H'_\delta)\ar[d]\\
\HF_*(K'_a)\ar[r]_{\psi_a}&\HF_*(H'_a)
}
\end{equation}
commutes, where vertical morphisms are monotone morphisms.

We complete the proof.
We have to show that
if $\SH_n^{<\delta}(X,\lambda) \to \SH_n^{<a}(X,\lambda)$ vanishes, then
$\SH_n^{<\delta}(X_\inn,\lambda) \to \SH_n^{<a}(X_\inn,\lambda)$ vanishes.

By (i)-(b), 
$a_{K'_c}=a_{H'_c}=c$ for $c \in \{\delta,a\}$.
Hence by lemma \ref{lem:firstdef}, it is enough to prove that
if $\HF_n(K'_\delta) \to \HF_n(K'_a)$ vanishes, then
$\HF_n(H'_\delta) \to \HF_n(H'_a)$ vanishes.

By (iii), 
$C_k(H'_\delta) = C_k(K'_\delta)=0$ for $k \ge n+1$, 
and $C_n(K'_\delta)$ is identified with $C_n(H'_\delta)$.
Hence $\psi_\delta: \HF_n(K'_\delta) \to \HF_n(H'_\delta)$ is injective, therefore isomorphic
(recall that we have assumed $X$ and $X_\inn$ to be connected).
Then, (\ref{eq:monotonicity}) implies that 
if $\HF_n(K'_\delta) \to \HF_n(K'_a)$ vanishes, then
$\HF_n(H'_\delta) \to \HF_n(H'_a)$ vanishes.
\end{proof}

\subsection{Proof of theorem \ref{thm:reebchord}}

First we define the Conley-Zehnder index for elements of $\mca{P}_0(\partial X, \lambda)$.
We assume that $n \ge 2$.
Let $x \in \mca{P}_0(\partial X, \lambda)$.
Then, there exists $\bar{x}: D^2 \to X$ such that 
$\bar{x}(e^{2\pi i\theta}) = x(\tau\theta)$.
Take a trivialization of $\bar{x}^*TX$ as symplectic vector bundle, 
\[
F:(\R^{2n},\omega_n) \times D^2 \to \bar{x}^*TX; \qquad
(v,z) \mapsto \bigl(F_z(v),z\bigr),
\]
such that for any $\theta \in \R/\Z$, the following holds:
\begin{equation}\label{eq:CZ}
\begin{cases}
&F_{e^{2\pi i\theta}}(0,\ldots,0,0,1)=\partial_r\bigl(x(\tau\theta)\bigr), \\
&F_{e^{2\pi i\theta}}(0,\ldots,0,1,0)=R\bigl(x(\tau\theta)\bigr), \\
&F_{e^{2\pi i\theta}}\bigl({\R}^{2n-2}\times(0,0)\bigr)=\xi\bigl(x(\tau\theta)\bigr).
\end{cases}
\end{equation}
Note that such trivialization exists only if $n \ge 2$.

Define a symplectic path $\gamma \in \mca{P}_\tau(2n-2)$ by 
\[
\gamma(t):= \bigl(F_{e^{2\pi it/\tau}}|_{\R^{2n-2}\times (0,0)}\bigr)^{-1} \circ \Phi_t|_\xi \circ F_1|_{\R^{2n-2}\times (0,0)},
\]
where $(\Phi_t)_t$ is the Poincar\'{e} map of the flow generated by $R$ on $\partial X$. 
Then, define 
\[
\mu_\CZ(x):=i(\gamma).
\]
$x$ is called \textit{nondegenerate} if and only if $\gamma \in \mca{P}^*_\tau(2n-2)$.
The following lemma will be useful in later (note that it also implies that the above 
definition is consistent, i.e. it does not depend on choices of $\bar{x}$).

\begin{lem}\label{lem:CZ}
Let $H \in C^\infty(\hat{X})$ such that
$\partial X=H^{-1}(0)$ and $\partial_r H>0$ on $\partial X$.
Then, there exists $1:1$ correspondence between elements of $\mca{P}_0(\partial X,\lambda)$ and 
periodic orbits of $X_H$ on $\partial X$, which are contractible in $X$.
For $x \in \mca{P}_0(\partial X, \lambda)$, denote the corresponding periodic orbit of $X_H$ by $x_H$.
When $n \ge 2$, 
\[
\mu_{\CZ}(x) = \sup_H \mu_{\CZ}(x_H),
\]
where $H$ runs over all Hamiltonians satisfying the conditions as above. 
\end{lem}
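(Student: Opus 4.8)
The plan is to separate the two assertions: the orbit correspondence follows from the geometry of $\hat X$ near $\partial X$, and the index identity follows from a flag decomposition of the linearized flow together with the axioms of Theorem \ref{thm:CZ} and Lemma \ref{lem:degenerate}.

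For the first assertion I would work in the collar coordinates $(z,r)\in\partial X\times(0,\infty)$ of $\hat X$, in which $\hat\lambda=r\,\pi^*\lambda$, hence $d\hat\lambda=dr\wedge\pi^*\lambda+r\,\pi^*d\lambda$. A one-line computation gives $i_R\,d\hat\lambda=-dr$ and $d\hat\lambda(\partial_r,R)=1$ on the whole collar, so $\langle\partial_r,R\rangle$ is a $d\hat\lambda$-symplectic sub-bundle and $\xi$ is its symplectic orthocomplement. Since $\partial X=H^{-1}(0)$ is a regular level set, $dH$ vanishes on $T\partial X$, so along $\partial X$ one has $dH=(\partial_rH)\,dr$ in these coordinates, whence $X_H=(\partial_rH)\,R$ on $\partial X$. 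Because $\partial_rH>0$, the hypersurface $\partial X$ is invariant under the flow of $X_H$ and this flow is an orientation-preserving time-reparametrization of the Reeb flow; consequently its periodic orbits are exactly the periodic Reeb orbits as unparametrized loops, and since contractibility in $X$ depends only on the image, $x\mapsto x_H$ is a well-defined bijection onto the contractible periodic orbits of $X_H$ on $\partial X$. This proves the first assertion.

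For the index identity, fix $x\in\mca{P}_0(\partial X,\lambda)$ of period $\tau$ and a capping $\bar x$, and take $F$ to be the symplectic trivialization of $\bar x^*T\hat X$ given by (\ref{eq:CZ}); this is exactly the trivialization whose restriction to the $\R^{2n-2}$-factor defines $\mu_{\CZ}(x)$, and since $c_1(TX)=0$ it may also be used to compute $\mu_{\CZ}(x_H)$. The flow of $X_H$ preserves $\partial X$ and carries $X_H$ to $X_H$, so along $x_H$ the linearized flow $D\phi^{X_H}_t$, read in $F$, is block upper-triangular for the flag $\R\langle e_{2n-1}\rangle\subset\R^{2n-1}\subset\R^{2n}$ (with $e_{2n-1}\mapsto R$, $e_{2n}\mapsto\partial_r$): the induced path on $\R^{2n-1}/\R\langle e_{2n-1}\rangle\cong\xi$ is a reparametrization (a linear one for the model Hamiltonians below) of the linearized Reeb flow on $\xi$, so by axiom (1) of Theorem \ref{thm:CZ} its index is $\mu_{\CZ}(x)$; the induced map on $\langle X_H\rangle\oplus T\hat X/T\partial X$ fixes $X_H(x_H(0))$, so by symplectic reduction its other eigenvalue is also $1$, and the associated $2$-dimensional path $\gamma^H_{\mathrm{tr}}\in\mca{P}_T(2)$ is unipotent. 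For a model Hamiltonian $H=h(r)$ near $\partial X$ with $h(1)=0$, $h'(1)>0$ one checks $X_H=h'(r)R$ on the whole collar, the flow is $(z,r)\mapsto(\phi^R_{th'(r)}z,r)$, and $D\phi^{X_H}_t$ in $F$ is genuinely block-diagonal, equal to (the linearly reparametrized linearized Reeb flow) $\diamond\,\bigl(\begin{smallmatrix}1&0\\ h''(1)t&1\end{smallmatrix}\bigr)$; by axiom (2), $\mu_{\CZ}(x_H)=\mu_{\CZ}(x)+i\bigl(t\mapsto\bigl(\begin{smallmatrix}1&0\\ h''(1)t&1\end{smallmatrix}\bigr)\bigr)$. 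Using axioms (3)--(5) and the $\sup$--$\inf$ description of Lemma \ref{lem:degenerate}, the index of this $2$-dimensional shear path is $0$ for one sign of $h''(1)$ and $-1$ for the other (and $-1$ for $h''(1)=0$, the constant path), so choosing $h''(1)$ of the good sign gives a valid $H$ with $\mu_{\CZ}(x_H)=\mu_{\CZ}(x)$, i.e.\ $\sup_H\mu_{\CZ}(x_H)\ge\mu_{\CZ}(x)$. For the reverse inequality, $\gamma^H_{\mathrm{tr}}$ being unipotent has index $\le0$, and the additivity of Long's index over the symplectic reduction of the preserved coisotropic hyperplane $T\partial X$ gives $\mu_{\CZ}(x_H)\le\mu_{\CZ}(x)+0$; hence $\sup_H\mu_{\CZ}(x_H)=\mu_{\CZ}(x)$. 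Since the right-hand side does not involve $\bar x$, this also re-proves that the definition of $\mu_{\CZ}(x)$ is independent of $\bar x$.

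The only genuinely delicate point I expect is the reverse inequality for an arbitrary valid $H$, where the flag decomposition is only block-triangular, so axiom (2) does not apply directly. The safe route, if the required additivity over a preserved coisotropic subspace is not available in citable form, is to connect $H$ to a model $\widetilde H=h(r)$ through valid Hamiltonians $H_s$ (the orbit persisting throughout as the fixed unparametrized Reeb loop $x_{H_s}$, with each $\gamma^{H_s}_{\mathrm{tr}}$ unipotent), and to apply Lemma \ref{lem:degenerate} directly to $D\phi^{X_H}_\cdot$: in every neighbourhood one can find a nondegenerate perturbation whose $\xi$-part still has index $\mu_{\CZ}(x)$ and whose transverse $2$-dimensional piece is hyperbolicized to index $0$, forcing the $\sup$--$\inf$ value to be $\le\mu_{\CZ}(x)$. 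Pinning down which sign of the shear parameter yields index $0$, and verifying $i(\gamma^H_{\mathrm{tr}})\le0$ in general, is the spot where Long's conventions must be handled carefully; everything else is the flag decomposition plus the cited properties of the index.
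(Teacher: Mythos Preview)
Your approach is essentially the paper's: both use the trivialization $F$ of \eqref{eq:CZ}, split the linearized flow into a $\xi$-block and a $2\times 2$ block in the $(R,\partial_r)$-directions, identify the $\xi$-part with the path defining $\mu_{\CZ}(x)$, and compute the index of the transverse $2$-dimensional shear path to be $0$ or $-1$ according to sign. The paper simply asserts that for \emph{every} admissible $H$ the matrix $\Gamma(t)$ is block-diagonal,
\[
\Gamma(t)=\begin{pmatrix}\gamma(t)&0&0\\0&1&0\\0&a(t)&1\end{pmatrix},
\]
applies axiom~(2) of Theorem~\ref{thm:CZ}, and reads off $i(\alpha)=0$ when $a(\tau)>0$ and $i(\alpha)=-1$ when $a(\tau)\le 0$. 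Your model-Hamiltonian computation ($H=h(r)$, shear parameter $h''(1)$) is exactly this calculation, and Corollary~\ref{cor:CZ} confirms that the ``good sign'' is $h''(1)>0$.

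You are, however, more careful than the paper on the point you flag, and rightly so. For general $H$ the block-diagonal form is not valid: if $f:=\partial_rH$ is nonconstant along the orbit then $D\Phi_t$ sends $R$ to $\bigl(f(x_H(t))/f(x_H(0))\bigr)R$, not to $R$; and since $\Phi_t|_{\partial X}(z)=\phi_R^{s(t,z)}(z)$ is a \emph{point-dependent} reparametrization, differentiating in $z$ produces an extra $R$-component $d_zs(t,\cdot)$, so $\xi$ is not preserved unless $df|_\xi=0$. Thus $\Gamma(t)$ is only block-triangular for the flag $\langle R\rangle\subset T\partial X\subset T\hat X$, exactly as you say, and the paper's direct appeal to axiom~(2) needs justification. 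Your route through model Hamiltonians is where the paper's computation is genuinely valid and already yields $\sup_H\mu_{\CZ}(x_H)\ge\mu_{\CZ}(x)$; for the reverse inequality your two suggested strategies (reduction additivity, or a deformation of $H$ to a model $h(r)$ combined with Lemma~\ref{lem:degenerate}) are both reasonable ways to close the gap that the paper leaves implicit.
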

\begin{proof}
The first assertion is obvious. 
We prove the second assertion.
Let $x \in \mca{P}_0(\partial X,\lambda)$, and $x_H$ be the corresponding periodic orbit of $X_H$ with period $\tau$.
Take $\bar{x}:D^2 \to X$ such that $\bar{x}(e^{2\pi i\theta})=x_H(\tau\theta)$, and 
take a trivialization of $\bar{x}^*TX$ as symplectic vector bundle 
$F:D^2 \times (\R^{2n},\omega_n) \to \bar{x}^*TX$, which satisfies (\ref{eq:CZ}).

Define $\Gamma \in \mca{P}_\tau(2n)$ by 
\[
\Gamma(t):=(F_{e^{2\pi it/\tau}})^{-1} \circ \Phi_t \circ F_1
\]
where $(\Phi_t)_t$ is the Poincar\'{e} map of the flow generated by $X_H$. Then, $\Gamma(t)$ can be written in the form 
\[
\Gamma(t) = \begin{pmatrix}
                  \gamma(t)&0&0\\
                  0&1&0\\
                  0&a(t)&1
                  \end{pmatrix}.
\]
Denote the symplectic path $t \mapsto \begin{pmatrix}1&0\\a(t)&1\end{pmatrix}$ by $\alpha$. Then, 
by theorem \ref{thm:CZ}-(2), 
$i(\Gamma)=i(\alpha)+i(\gamma)$. By definition, $i(\gamma)=\mu_{\CZ}(x)$. 
On the other hand, it is easy to verify that 
\[
i(\alpha ) =\begin{cases}
            -1 &\bigl(a(1) \le 0\bigr)\\
            0  &\bigl(a(1)  >  0 \bigr)
           \end{cases}.
\]
This proves the second assertion.
\end{proof}

By lemma \ref{lem:CZ}, 
it is possible to define the Conley-Zehnder index for $x \in \mca{P}_0(\partial X, \lambda)$ in 
another way, i.e.
\[
\mu_{\CZ}(x):= \sup_H \mu_{\CZ}(x_H),
\]
where $H$ runs over all elements in $C^\infty(\hat{X})$ such that $\partial X = H^{-1}(0)$ and 
$\partial_r H>0$ on $\partial X$. Note that this definition makes sense even when $n=1$.

\begin{cor}\label{cor:CZ}
Let $H \in C^\infty(\hat{X})$ such that $\partial X = H^{-1}(0)$ and $\partial_r H>0$ on $\partial X$.
Assume that there exist $0<r_0<1$ and $h \colon [r_0,\infty) \to \R$ such that
$H(z,r)=h(r)$ and $\partial_r^2h(1)>0$. 
Then, for any $x \in \mca{P}_0(\partial X, \lambda)$, 
$\mu_{\CZ}(x_H) = \mu_{\CZ}(x)$.
\end{cor}
\begin{proof}
First consider the case $n \ge 2$.
We use notations in the proof of lemma \ref{lem:CZ}.
Then, if $H$ satisfies the condition as the above statement, 
$a(1)>0$. Hence $\mu_{\CZ}(x_H)=\mu_{\CZ}(x)$.
The case $n=1$ is proved by similar arguments.
\end{proof}

In the rest of this subsection, we prove theorem \ref{thm:reebchord}.
First we consider cases in which all elements of $\mca{P}_0(\partial X, \lambda)$ are non-degenerate.

\begin{lem}\label{lem:reebchord}
Let $(X,\lambda)$ be as in theorem \ref{thm:reebchord}.
Assume that all elements in $\mca{P}_0(\partial X,\lambda)$ are non-degenerate.
Then, there exists $x \in \mca{P}_0(\partial X,\lambda)$ such that $\tau(x) = \capp_S(X,\lambda)$ and 
$\mu_{\CZ}(x) \in \{n,n+1\}$.
\end{lem}
\begin{proof}
We claim that for any $\varepsilon>0$, there exists
$x_\varepsilon \in \mca{P}_0(\partial X, \lambda)$ such that 
$\big\lvert \capp_S(X,\lambda) - \tau(x_\varepsilon) \big\rvert < \varepsilon$ 
and $\mu_{\CZ}(x_\varepsilon) \in \{n,n+1\}$.
Since all elements in $\mca{P}_0(\partial X,\lambda)$ are non-degenerate,
$\tau(\partial X, \lambda) \cap (0,T)$ is a finite set for any $T>0$.
Therefore, for sufficiently small $\varepsilon>0$, $\tau(x_\varepsilon)= \capp_S(X,\lambda)$.

We prove the above claim. It is enough to show the claim for sufficiently small $\varepsilon>0$.
In particular, we may assume that $\varepsilon/2 <\capp_S(X,\lambda)$.
The proof consists of $3$ steps.

\textbf{Step 1.}
First, take $(G^i)_i$, a sequence of time-independent Hamiltonians on $\hat{X}$ which satisfies the following
properties:
\begin{itemize}
\item $(G^i)_i$ is a cofinal sequence in $\mca{H}^\rest(X,\lambda)$, i.e. for any $G \in \mca{H}^\rest(X,\lambda)$,
$G_t \le G^i$ for any $t \in \R/\Z$ when $i$ is sufficiently large.
\item $G^i|_{X(1/2)}$ is sufficiently small in $C^2$ norm.
\item There exists $g^i:[1/2,\infty) \to \R$ such that $G^i(z,r)=g^i(r)$ on $\partial X \times [1/2,\infty)$ and 
$\partial_r^2g^i>0$ on $(1/2,1)$.
\end{itemize}

Then, $\mca{P}(G^i)$ consists of constant maps to $\Crit(G^i)$ and $S^1$-family of degenerate periodic orbits.
There exists a 1:1 correspondence between $S^1$-family of periodic orbits and 
elements of $\mca{P}_0(\partial X, \lambda)$ with periods less than $a_{G^i}$.
Let $x \in \mca{P}_0(\partial X, \lambda)$ such that $\tau(x)<a_{G^i}$, and 
let $\gamma_x$ be an element of a $S^1$-family of periodic orbits which corresponds to $x$.
Then, it follows from corollary \ref{cor:CZ} and $\partial_r^2g^i>0$ on $(1/2,1)$ that $\mu_{\CZ}(\gamma_x) = \mu_{\CZ}(x)$.
Moreover, by replacing $G^i$ if necessary, we may assume that
$\big\lvert \mca{A}_{G^i}(\gamma_x) - \tau(x) \big\rvert < \varepsilon/2$.

\textbf{Step 2.}
Perturbing each $(G^i)_i$, we can construct $(H^i)_i$, a sequence in $\mca{H}^\rest_\ad(X,\lambda)$ with 
the following properties:
\begin{enumerate}
\item[(i)] $(H^i)_i$ is a cofinal sequence in $\mca{H}^\rest_\ad(X,\lambda)$. i.e. for any $H \in \mca{H}^\rest_\ad(X,\lambda)$, 
$H_t \le H^i_t$ for any $t \in \R/\Z$ for sufficiently large $i$.
\item[(ii)] $H^i|_{X(1/2)}$ is time-independent, i.e. there exists 
$h^i \in C^\infty\bigl(X(1/2)\bigr)$ such that $H^i_t|_{X(1/2)}=h^i$ for any $t \in \R/\Z$.
\item[(iii)] For each $x \in \mca{P}_0(\partial X,\lambda)$ such that $\tau(x)<a_{H^i}$, 
there exists $x^{\pm} \in \mca{P}(H^i)$ such that 
$\mu_{CZ}(x^{\pm}) = \mu_{\CZ}(x)+(1\pm1)/2$, 
$\big\lvert \mca{A}_{H^i}(x^{\pm}) - \tau(x) \big\rvert < \varepsilon/2$.
\item[(iv)] $\mca{P}(H^i)$ consists of constant maps to $\Crit(h^i)$ and 
$\bigl\{ x^{\pm} \bigm{|} x \in \mca{P}_0(\partial X,\lambda), \tau(x)<a_{H^i}\bigr\}$.
\end{enumerate}
Precise arguments on perturbations are carried out as in \cite{CFHW}, proposition 2.2.

\textbf{Step 3.}
Abbreviate $\capp_S(X,\lambda)$ by $c$.
By definition of $\capp_S$, 
$\SH_n^{<c-\varepsilon/2}(X,\lambda) \to \SH_n^{<c+\varepsilon/2}(X,\lambda)$ is not injective.
Then $\SH_{n+1}^{[c-\varepsilon/2,c+\varepsilon/2)}(X,\lambda) \ne 0$, for the long exact sequence
\[
\cdots \to \SH_{n+1}^{[c-\varepsilon/2,c+\varepsilon/2)}(X,\lambda) \to 
\SH_n^{<c-\varepsilon/2}(X,\lambda) \to \SH_n^{<c+\varepsilon/2}(X,\lambda) \to \cdots.
\]
Therofore, by (i), $\HF_{n+1}^{[c-\varepsilon/2,c+\varepsilon/2)}(H^i) \ne 0$ for sufficiently large $i$.
This implies that there exists $x_i \in \mca{P}(H^i)$ such that 
$\mca{A}_{H^i}(x_i) \in [c-\varepsilon/2, c+\varepsilon/2)$ and $\mu_{\CZ}(x_i)=n+1$.
Since $(H^i)_i$ is cofinal in $\mca{H}_{\rest}(X,\lambda)$, 
we may assume that $\inf h^i> \varepsilon/2-c$.
Hence $x_i$ is not a constant map to $\Crit(h^i)$, and by (iv), 
there exists $x \in \mca{P}_0(\partial X,\lambda)$ such that
$x_i = x^+$ or $x_i= x^-$. 
By (iii), $\tau(x) \in [c-\varepsilon, c+\varepsilon)$ and 
$\mu_{\CZ}(x) \in \{n, n+1\}$.
Hence we have proved the claim.
\end{proof}

We prove theorem \ref{thm:reebchord}.

\begin{proof}
Let $(\hat{X},\hat{\lambda})$ be the completion of $(X,\lambda)$.
For any positive smooth function $f$ on $\partial X$, let $\Sigma_f$ be the hypersurface in $\hat{X}$ 
defined by $\bigl\{(z,f(z)) \bigm{|} z \in \partial X \bigr\}$, 
and $D_f$ be the bounded domain in $\hat{X}$ with boundary $\Sigma_f$.
Then $(D_f, \hat{\lambda})$ is a Liouville domain. 

If $| \log f |_{C^0(\partial X)} \le c$,
$X(e^{-c}) \subset D_f \subset X(e^c)$. Hence by theorem \ref{thm:monotonicity},
\[
e^{-c} \le \frac{\capp_S(D_f, \hat{\lambda})}{\capp_S(X,\lambda)} \le e^c.
\]
In particular, if $|\log f|_{C^0(\partial X)}$ is sufficiently small, then 
$\capp_S (D_f,\hat{\lambda})$ is sufficiently close to $\capp_S(X,\lambda)$.

Let $(f_m)_m$ be a sequence of $C^\infty(\partial X)$, such that all periodic Reeb orbits on 
$(\Sigma_{f_m}, \hat{\lambda})$ are non-degenerate, and $|\log f_m|_{C^2(\partial X)} \to 0$ as $m \to \infty$.
By lemma \ref{lem:reebchord}, for each integer $m$ there exists $x_m \in \mca{P}_0(\Sigma_{f_m},\hat{\lambda})$ such that
$\tau(x_m)= \capp_S (D_{f_m},\hat{\lambda})$ and $\mu_\CZ (x_m) \in \{n,n+1\}$.
Since $|\log f_m|_{C^2(\partial X)} \to 0$, $f_m \lambda$ converges to $\lambda$ in $C^2$.
Hence, setting $R_m$ to be the Reeb vector field on $(\partial X, f_m\lambda)$, 
$R_m$ converges to $R$ in $C^1$.
On the other hand, $\tau(x_m)$ converges to $\capp_S(X,\lambda) > 0$.
Hence, up to a subsequence, $(x_m)_m$ converges to $x_\infty \in \mca{P}_0(\partial X, \lambda)$ 
such that 
$\tau(x_\infty)=\capp_S(X,\lambda)$. Moreover, 
\[
\mu_{\CZ}(x_\infty) \le \liminf_{m \to \infty} \mu_{\CZ}(x_m) \le n+1,
\]
where the first inequality follows from 
lemma \ref{lem:i-1}.
\end{proof}

Theorem \ref{thm:reebchord}, together with lemma \ref{lem:CZ} implies the following corollary:

\begin{cor}\label{cor:reebchord}
Let $(X,\lambda)$ be a Liouville domain, and $\capp_S(X,\lambda)<\infty$.
Then, for any $H \in C^\infty(X)$ such that $\partial X = H^{-1}(0)$ and $\partial_r H>0$ on $\partial X$, 
there exists $x: \R/\tau \Z \to \partial X$ such that $\partial_t x=X_H(x)$,
$\int_{\R/\tau \Z} x^*\lambda = \capp_S(X,\lambda)$ and 
$\mu_{\CZ} (x) \le n+1$.
\end{cor}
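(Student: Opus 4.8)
The plan is to deduce the corollary directly from Theorem \ref{thm:reebchord} and Lemma \ref{lem:CZ}, transporting the periodic Reeb orbit produced by the former to an orbit of $X_H$ through the correspondence supplied by the latter. First I would extend $H$ from $X$ to $\hat{X}$ so that $\partial_r H>0$ continues to hold on a neighborhood of $\partial X$; this changes neither $X_H|_{\partial X}$ nor the cappings in $X$ that enter the definition of $\mu_{\CZ}$, so it is harmless. Applying Theorem \ref{thm:reebchord} (whose only hypothesis, $\capp_S(X,\lambda)<\infty$, is exactly what we are assuming) yields $y \in \mca{P}_0(\partial X,\lambda)$ with $\tau(y)=\capp_S(X,\lambda)$ and $\mu_{\CZ}(y)\le n+1$. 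By Lemma \ref{lem:CZ} there is a $1:1$ correspondence between $\mca{P}_0(\partial X,\lambda)$ and the periodic orbits of $X_H$ on $\partial X$ that are contractible in $X$; let $x:=y_H$ be the orbit of $X_H$ corresponding to $y$, viewed as a map $\R/\tau\Z\to\partial X$ with $\partial_t x=X_H(x)$, where $\tau$ is its $X_H$-period.

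It then remains to check the two numerical assertions. For the action, note that on $\partial X$ the vector field $X_H$ is a \emph{positive} multiple of the Reeb vector field $R$: indeed $X_H$ is tangent to $\partial X$ and $dH$ vanishes on $T\partial X$ (since $\partial X=H^{-1}(0)$), so $X_H$ lies in $\ker(d\hat{\lambda}|_{T\partial X})=\R R$, and the proportionality factor equals $\hat{\lambda}(X_H)=d\hat{\lambda}(Z,X_H)=dH(Z)=\partial_r H>0$ on $\partial X$, where $Z$ is the Liouville field (which is $r\partial_r$ on the collar). Hence $x$ traces out the same oriented closed curve as $y$, only reparametrized; since $\int x^*\lambda$ depends only on this oriented curve and $\int y^*\lambda=\tau(y)$ for a Reeb orbit, we obtain $\int_{\R/\tau\Z}x^*\lambda=\tau(y)=\capp_S(X,\lambda)$. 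For the index, Lemma \ref{lem:CZ} gives (when $n\ge 2$) $\mu_{\CZ}(y)=\sup_{H'}\mu_{\CZ}(y_{H'})$, the supremum being over all smooth $H'$ with $\partial X=(H')^{-1}(0)$ and $\partial_r H'>0$; in particular $\mu_{\CZ}(x)=\mu_{\CZ}(y_H)\le\mu_{\CZ}(y)\le n+1$. When $n=1$ the same inequality is immediate, because $\mu_{\CZ}(y)$ was by then \emph{defined} to be that supremum.

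There is no serious obstacle here; the argument is essentially a repackaging of results already established. The only points deserving a sentence of justification are the reparametrization invariance of $\int x^*\lambda$ together with the positivity of the factor relating $X_H|_{\partial X}$ to $R$ (which guarantees the two orbits carry the same orientation), and the entirely routine separate treatment of the case $n=1$.
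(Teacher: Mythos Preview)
Your proposal is correct and follows precisely the route the paper indicates: the corollary is stated in the paper immediately after the sentence ``Theorem \ref{thm:reebchord}, together with lemma \ref{lem:CZ} implies the following corollary,'' with no further argument given. You have simply spelled out the two points that make this implication work --- that $X_H|_{\partial X}$ is a positive multiple of $R$ (so the $\lambda$-integral is the Reeb period) and that the supremum formula in Lemma \ref{lem:CZ} gives $\mu_{\CZ}(y_H)\le\mu_{\CZ}(y)$ --- which is exactly what the paper leaves to the reader.
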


\subsection{Proof of theorem \ref{thm:covering}}
We prove the first assertion.
Since $\deg \pi <\infty$, $Y$ is compact. 
Define $Z \in \mca{X}(X)$ by $i_Zd\lambda=\lambda$.
Then,
$i_{\pi^*Z}d\pi^*\lambda=\pi^*\lambda$, and 
$\pi^*Z$ points outwards on $\partial Y$. 
Hence $(Y,\pi^*\lambda)$ is a Liouville domain.
Now we prove the second assertion: $\capp_S(Y,\pi^*\lambda) \le \capp_S(X,\lambda)$.
Define $\hat{\pi}: \hat{Y} \to \hat{X}$ by 
\[
\hat{\pi}(y) = \begin{cases}
               \pi(y) &( y \in Y) \\
               \bigl(\pi(z),r\bigr) &\bigl(y=(z,r) \in \partial Y \times [1,\infty)\bigr).
               \end{cases}
\]
Then, $\hat{\pi}: \hat{Y} \to \hat{X}$ is a covering map and $\deg \hat{\pi} = \deg \pi$.

For $H \in \mca{H}(X,\lambda)$, denote $H \circ \hat{\pi}$ by $\bar{H}$.
Since $\mca{P}(H)$ and $\mca{P}(\bar{H})$ consist of contractible solutions, 
$\mca{P}(\bar{H}) \to \mca{P}(H): y \mapsto \hat{\pi} \circ y$ is $\deg(\pi):1$.
We denote this map also by $\hat{\pi}$.

Denote the Poincar\'{e} map generated by $X_H$ (resp. $X_{\bar{H}}$) by $\bigl(\Phi^H_t\bigr)_t$
(resp. $\bigl(\Phi^{\bar{H}}_t\bigr)_t$).
Clearly, 
$d\hat{\pi} \circ d\Phi^{\bar{H}}_1 = d\Phi^H_1 \circ d\hat{\pi}$.
Hence $y \in \mca{P}(\bar{H})$ is non-degenerate if and only if $\hat{\pi}(y) \in \mca{P}(H)$ is non-degenerate.
Moreover, since $\tau(\partial Y,\pi^*\lambda) \subset \tau(\partial X,\lambda)$ and 
$a_{\bar{H}}=a_H$, 
if $a_H \notin \tau(\partial X,\lambda)$ then 
$a_{\bar{H}} \notin \tau(\partial Y,\pi^*\lambda)$.
Therefore, if $H \in \mca{H}_\ad(X,\lambda)$ then
$\bar{H} \in \mca{H}_\ad(Y,\pi^*\lambda)$.

Let $H \in \mca{H}_\ad(X,\lambda)$ and 
$J=(J_t)_{t \in \R/\Z} \in \mca{J}(X,\lambda)$.
Recall that $J$ is said to satisfy the \textit{transversality condition} with respect to $H$ if and only if 
for any $x,x' \in \mca{P}(H)$ and $u \in \hat{\mca{M}}\bigl(x,x':H,J\bigr)$,
\[
D_u: L^{1,p}(u^*T\hat{X}) \to L^p(u^*T\hat{X}); \qquad
\xi \mapsto \nabla_s\xi - J_t\nabla_t\xi_t -\bigl(\nabla_\xi J_t \cdot \partial_t u + \nabla_\xi(\nabla H_t) \bigr) 
\]
is onto ($p$ is an arbitrary real number satisfying $p>2$).
Let $\mca{J}_H(X,\lambda)$ be the set of elements of $\mca{J}(X,\lambda)$ which satisfy 
the transversality condition with respect to $H$.

Define $\bar{J} \in \mca{J}(Y,\pi^*\lambda)$ by 
$\bar{J}_t:=\hat{\pi}^*J_t$. 
Then, for any $x \in \mca{P}(H)$ and $y \in \mca{P}(\bar{H})$, the following map is bijective:
\[
\bigsqcup_{y' \in \hat{\pi}^{-1}(x)} \hat{\mca{M}}\bigl(y',y:\bar{H},\bar{J}\,\,\bigr) \to \hat{\mca{M}}\bigl(x,\hat{\pi}(y):H,J\bigr);
\qquad u \mapsto \hat{\pi} \circ u.
\]
Clearly, $d\hat{\pi} \circ D_u = D_{\pi \circ u} \circ d\hat{\pi}$ for any $u$.
Hence, $J \in \mca{J}_H(X,\lambda)$ if and only if $\bar{J} \in \mca{J}_{\bar{H}}(Y,\pi^*\lambda)$.

Let $H \in \mca{H}_\ad(X,\lambda)$ and $J \in \mca{J}_H(X,\lambda)$. Then, we claim that 
\[
\psi_H: \bigl(C_*(H), \partial_{H,J}\bigr) \to \bigl(C_*(\bar{H}), \partial_{\bar{H}, \bar{J}}\bigr); \qquad
[x] \mapsto \sum_{y \in \hat{\pi}^{-1}(x)} [y]
\]
is a chain map. Let $k$ be an integer and $x \in \mca{P}_k(H)$. Then, 
by definition 
\begin{align*}
\psi_H\bigl(\partial_{H,J}[x]\bigr)&=\psi_H\Biggl(\sum_{x'\in\mca{P}_{k-1}(H)} \sharp \mca{M}\bigl(x,x':H,J\bigr)\cdot[x']\Biggr)\\
&=\sum_{y' \in \mca{P}_{k-1}(\bar{H})} \sharp \mca{M}\bigl(x,\hat{\pi}(y'):H,J\bigr)\cdot[y'],\\
\partial_{\bar{H},\bar{J}}\bigl(\psi_H[x]\bigr)&=\sum_{y \in \hat{\pi}^{-1}(x)} \partial_{\bar{H},\bar{J}}[y]
=\sum_{y' \in \mca{P}_{k-1}(\bar{H})}
\Biggl(\sum_{y \in \hat{\pi}^{-1}(x)} \sharp \mca{M}\bigl(y,y':\bar{H},\bar{J}\,\,\bigr)\Biggr) \cdot [y'].
\end{align*}
Hence it is enough to prove that 
\[
\sharp \mca{M}\bigl(x,\hat{\pi}(y'):H,J\bigr)=\sum_{y \in \hat{\pi}^{-1}(x)} \sharp \mca{M}\bigl(y,y':\bar{H},\bar{J}\,\,\bigr)
\]
for any $x \in \mca{P}_k(H)$, $y' \in \mca{P}_{k-1}(\bar{H})$.
But it is clear since 
\[
\bigsqcup_{y \in \hat{\pi}^{-1}(x)} \mca{M}\bigl(y,y':\bar{H},\bar{J}\,\,\bigr) \to \mca{M}\bigl(x,\hat{\pi}(y'):H,J\bigr); \quad
[u] \mapsto [\hat{\pi} \circ u]
\]
is a bijection. Therefore we have proved that $\psi_H$ is a chain map. 
Hence we can define a morphism
\[
\HF_*(H:X,\lambda) \to \HF_*(\bar{H}:Y,\pi^*\lambda).
\]
We denote this morphism also by $\psi_H$.
Let $H, H' \in \mca{H}_{\ad}(X,\lambda)$. If $a_H \le a_{H'}$, 
\begin{equation}\label{com:covering}
\xymatrix{
\HF_*(H)\ar[rr]^{\psi_H}\ar[d]&&\HF_*\bigl(\bar{H}\bigr)\ar[d]\\
\HF_*(H')\ar[rr]_{\psi_{H'}}&&\HF_*\bigl(\bar{H'}\bigr)
}
\end{equation}
commutes, where vertical morphisms are monotone morphisms.

To prove $\capp_S(Y,\pi^*\lambda) \le \capp_S(X,\lambda)$, it is enough to show that 
if $a \notin \tau(\partial X, \lambda)$ satisfies 
$a> \capp_S(X,\lambda)$, then $a>\capp_S(Y,\pi^*\lambda)$.
$a>\capp_S(X,\lambda)$ implies that 
$\SH_n^{<\delta}(X,\lambda) \to \SH_n^{<a}(X,\lambda)$ vanishes for any $0 < \delta < \delta(\partial X, \lambda)$.
Take $H_{\pm} \in \mca{H}_\ad(X,\lambda)$ such that $a_{H_-}=\delta$, $a_{H_+}=a$.
Then, by lemma \ref{lem:firstdef}, 
$\HF_n(H_-) \to \HF_n(H_+)$ vanishes.

In the rest of this proof, we assume that $X$ and $Y$ are connected
(general case follows at once from this case).
Take $H_-$ so that it satisfies following conditions:
\begin{itemize}
\item $H_-$ is time independent. 
\item $H_-(z,r)=\delta r+\const$ for $(z,r) \in \partial X \times [1,\infty)$.
\item $H_-|_X$ is sufficiently small in $C^2$.
\end{itemize}
Then, $\mca{P}(H_-)$ and $\mca{P}(\bar{H_-})$ consist only of constant maps to $\Crit(H_-)$, $\Crit(\bar{H_-})$.
In particular, $C_k(\bar{H_-})=C_k(H_-)=0$ for $k \ge n+1$.
Hence $\psi_{H_-}: \HF_n(H_-) \to \HF_n(\bar{H_-})$ is injective, therefore isomorphism
(since $X$ and $Y$ are connected).

Hence, the commutative diagram (\ref{com:covering}) implies that 
$\HF_n(\bar{H_-}) \to \HF_n(\bar{H_+})$ vanishes.
Again by lemma \ref{lem:firstdef},
$\SH_n^{<\delta}(Y,\pi^*\lambda) \to \SH_n^{<a}(Y,\pi^*\lambda)$ vanishes.
Hence $a>\capp_S(Y,\pi^*\lambda)$.
This completes the proof of theorem \ref{thm:covering}. \qed

\section{Capacity of Riemannian manifolds}\label{sec:capr}
In this section, we introduce the notion of capacity for Riemannian manifolds without boundaries, 
which is denoted by $\capp_R$.
The main result in this section is theorem \ref{thm:mainestimate}, which includes property (A) which we have 
stated in the introduction.
In 4.1, we give the definition of $\capp_R$, and prove its basic properties.
In particular, proposition \ref{prop:covering}, which is an easy consequence of theorem \ref{thm:covering}, is 
important.
In 4.2, we prove that when $N$ is a compact connected Riemannian manifold with non-empty boundary, then $\capp_R(\interior N)<\infty$
(theorem \ref{thm:w<infty}).
In 4.3, first we prove that $\R^n \setminus \Z^n$ with the flat metric has a finite capacity (theorem \ref{thm:Rn-Zn}).
This is proved by combining proposition \ref{prop:covering} and theorem \ref{thm:w<infty}.
Theorem \ref{thm:mainestimate} is obtained by theorem \ref{thm:Rn-Zn} and elementary geometric arguments.

\subsection{The definition and basic properties}

First we introduce some notations.
Let $N$ be a $n$-dimensional Riemannian manifold.
Let us denote the natural projection $T^*N \to N; (q,p) \mapsto q$ by $\pi_N$.
We define $\lambda_N \in \Omega^1(T^*N)$ by 
\[
\lambda_N(v):=p\bigl(d\pi_N(v)\bigr)\quad\bigl(q \in N, p \in T_q^*N, v \in T_{(q,p)}(T^*N)\bigr).
\]
Then, $\omega_N:=d\lambda_N$ is a symplectic form on $T^*N$.
Define $\nu_N \in \mca{X}(T^*N)$ by $i_{\nu_N}\omega_N=\lambda_N$.

For $V \in C^\infty(N)$,
define $H_V \in C^\infty(T^*N)$ by 
\[
H_V(q,p)=V(q)+|p|^2/2,
\]
and denote $\{H_V \le 0\} \subset T^*N$ by $D_V$.

$\mca{V}(N)$ denotes the set of $V \in C^\infty(N)$ such that 
$0$ is a regular value of $V$, and $\{ V \le 0\} \subset N$ is compact.

For $\xi \in \mca{X}(N)$, define $F_\xi \in C^\infty(T^*N)$ and $\tilde{\xi} \in \mca{X}(T^*N)$ by 
$F_\xi(q,p):=p(\xi_q)$ and $\tilde{\xi}:=X_{F_\xi}$.
Then, $L_{\tilde{\xi}}\omega_N=0$ and $\tilde{\xi}_{(q,0)}=\xi_q$.

\begin{lem}\label{lem:DV}
Let $N$ be a Riemannian manifold without boundary.
Then, for any $V \in \mca{V}(N)$, there exists $\lambda \in \Omega^1(T^*N)$ such that
$d\lambda=\omega_N$ and $(D_V,\lambda)$ is a Liouville domain.
If $V, V' \in \mca{V}(N)$ satisfy $V>V'$, then $\capp_S(D_V,\omega_N) \le \capp_S(D_{V'},\omega_N)$.
\end{lem}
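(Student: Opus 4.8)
The plan is to produce the required primitive as a small perturbation $\lambda_\xi := \lambda_N + \varepsilon\, dF_\xi$ of the canonical $1$-form, for a suitable $\xi \in \mca{X}(N)$ and small $\varepsilon>0$; the point is that $\lambda_N$ itself does \emph{not} work, since its Liouville field $\nu_N$ vanishes on the zero section and hence at the points of $\partial D_V$ lying over $\{V=0\}$, so the perturbation must restore transversality precisely there. One has $d\lambda_\xi=\omega_N$ automatically, and since $i_{\nu_N}\omega_N=\lambda_N$ and $i_{\tilde{\xi}}\omega_N=-dF_\xi$, the Liouville vector field of $\lambda_\xi$ is $Z_\xi=\nu_N-\varepsilon\tilde{\xi}$.

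First I would record the elementary facts. Because $\{V\le 0\}$ is compact and $|p|^2\le 2\sup_{\{V\le 0\}}(-V)$ on $D_V$, the domain $D_V$ is compact; $0$ is a regular value of $H_V$ (at a point of $\partial D_V$ with $p\neq 0$ the fibrewise derivative of $H_V$ is nonzero, and where $p=0$ one has $V=0$, so $dH_V=dV\neq 0$ because $V\in\mca{V}(N)$), so $\partial D_V$ is a smooth hypersurface and $(D_V,\omega_N)$ a compact symplectic manifold with boundary; and $c_1(TD_V)=0$, since the Lagrangian splitting of $T(T^*N)$ identifies it with the complexification of a real bundle. In local canonical coordinates $\nu_N=\sum_i p_i\partial_{p_i}$, whence $dH_V(\nu_N)=|p|^2$ by homogeneity of $\tfrac12|p|^2$ in $p$, while $\tilde{\xi}$ projects to $\xi$ and is fibrewise linear, so $dH_V(\tilde{\xi})=dV(\xi_q)+Q_\xi$ with $|Q_\xi|\le C|p|^2$ whenever $q$ stays in a fixed compact set. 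Altogether
\[
 dH_V(Z_\xi)=|p|^2-\varepsilon\, dV(\xi_q)-\varepsilon Q_\xi ,
\]
and since $H_V$ increases outward along $\partial D_V$, it suffices to make the right-hand side positive on the compact hypersurface $\partial D_V$.

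For a single $V$ I would take $\xi=-\rho\nabla V$, where $\rho$ is a cutoff equal to $1$ on a small neighbourhood $W'$ of the compact hypersurface $\{V=0\}$ and supported in a compact set $W$ on which $\nabla V\neq 0$; then $-dV(\xi)=\rho|\nabla V|^2\ge 0$, bounded below by a positive constant on $W$. Split $\partial D_V$ into $\{|p|^2\ge\delta\}$ and $\{|p|^2<\delta\}$. On the first, $dH_V(\nu_N)\ge\delta$ while $|dH_V(\tilde{\xi})|$ is bounded on $\partial D_V$, so $dH_V(Z_\xi)>0$ once $\varepsilon$ is small. On the second — which, for $\delta$ small, lies over $\{-\delta/2<V\le 0\}\subset W'$, where $\xi=-\nabla V$ — the term $-\varepsilon\, dV(\xi_q)=\varepsilon|\nabla V|^2$ dominates $\varepsilon|Q_\xi|\le\varepsilon C\delta$ provided $\delta$ was chosen small (a condition independent of $\varepsilon$). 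Thus for $\delta$ and then $\varepsilon$ small, $Z_\xi$ points strictly outward on $\partial D_V$ and $(D_V,\lambda_\xi)$ is a Liouville domain, which is the first assertion.

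The monotonicity statement is where the only real subtlety lies: to apply theorem \ref{thm:monotonicity} I need \emph{one} primitive of $\omega_N$ making both $(D_V,\lambda)$ and $(D_{V'},\lambda)$ Liouville domains. Observe that $V>V'$ gives $D_V\subset\interior D_{V'}$, a codimension-$0$ inclusion, and that $\{V=0\}$ and $\{V'=0\}$ are \emph{disjoint} compact hypersurfaces; so I can take $\xi=-\rho\nabla V-\rho'\nabla V'$ with $\rho,\rho'$ cutoffs as above having disjoint supports. The argument of the previous paragraph then applies, verbatim and symmetrically, to $\partial D_V$ and to $\partial D_{V'}$, so for $\varepsilon$ small $Z_\xi$ is outward-transverse along both. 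Theorem \ref{thm:monotonicity} gives $\capp_S(D_V,\lambda_\xi)\le\capp_S(D_{V'},\lambda_\xi)$, and theorem \ref{thm:dlambda} identifies these with $\capp_S(D_V,\omega_N)$ and $\capp_S(D_{V'},\omega_N)$, which finishes the proof. The main obstacle is thus not any single estimate but the requirement that the perturbing vector field make the Liouville flow outward-transverse to two distinct boundary hypersurfaces simultaneously, exactly at the loci where the unperturbed radial field $\nu_N$ degenerates.
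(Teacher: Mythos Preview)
Your argument is correct and follows essentially the same strategy as the paper's proof: perturb the canonical Liouville field $\nu_N$ by $\varepsilon\tilde{\xi}$ for a vector field $\xi$ transverse to $\{V=0\}$, use compactness of $\partial D_V$ to show the perturbed field is outward-pointing for small $\varepsilon$, and for the monotonicity part exploit that $\{V=0\}$ and $\{V'=0\}$ are disjoint so a single $\xi$ works for both. The paper simply asserts ``$dH_V(Z_\delta)>0$ on $H_V^{-1}(0)$ for sufficiently small $\delta>0$'' without the case split $|p|^2\ge\delta$ versus $|p|^2<\delta$, whereas you spell out this compactness argument explicitly; but the underlying mechanism is identical.
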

\begin{rem}
For any Liouville domain $(X,\lambda)$, $\capp(X,\lambda)$ depends only on $d\lambda$ (lemma \ref{lem:capacity}, (3)).
Hence formulas 
$\capp_S(D_V,\omega_N)$ and $\capp_S(D_{V'},\omega_N)$ make sense.
\end{rem}
\begin{proof}
We prove the first assertion. 
Take $\xi \in \mca{X}(N)$ such that $dV(\xi)>0$ on $\{V=0\}$.
For $\delta>0$, define $Z_\delta \in \mca{X}(T^*N)$ by
$Z_\delta:=\nu_N + \delta \tilde{\xi}$.
Then, $L_{Z_\delta}\omega_N=\omega_N$ for any $\delta$.
Moreover, $dH_V(Z_\delta)>0$ on $H_V^{-1}(0)$ for sufficiently small $\delta>0$.
Hence $\lambda_\delta:=i_{Z_\delta}\omega_N$ satisfies $d\lambda_\delta=\omega_N$ and $(D_V,\lambda_\delta)$ 
is a Liouville domain for sufficiently small $\delta>0$.

We prove the second assertion. 
If $V>V'$, $\{V=0\} \cap \{V'=0\} = \emptyset$. Hence 
there exists $\xi \in \mca{X}(N)$ such that 
$dV(\xi)>0$ on $\{V=0\}$ and $dV'(\xi)>0$ on $\{V'=0\}$.
Then, for sufficiently small $\delta>0$,
$(D_V,\lambda_\delta)$ and $(D_{V'},\lambda_\delta)$ are both Liouville domains.
On the other hand, $D_V \subset D_{V'}$. Hence 
by theorem \ref{thm:monotonicity}, $\capp_S(D_V,\omega_N) \le \capp_S(D_{V'},\omega_N)$.
\end{proof}

We define the notion of capacity for Riemannian manifolds without boundary.

\begin{defn}\label{defn:width}
Let $N$ be a Riemannian manifold without boundary. 
Then, capacity of $N$ is defined by 
\[
\capp_R(N): = \sup \bigl\{ \capp_S(D_V,\omega_N) \bigm{|} V \in \mca{V}(N), V > -1/2\bigr\}.
\]
\end{defn}

\begin{rem}
As is clear from the above definition, when $N$ is a compact Riemannian manifold without boundary 
$\capp_R(N) = \capp_S(DT^*N,\omega_N)$, where $DT^*N:=\{(q,p) \in T^*N \mid |p| \le 1 \}$.
\end{rem}

In the following, we sometimes denote $N$ by $(N,g)$, where $g$ is the Riemannian metric on $N$.
We also sometimes denote $H_V$ and $D_V$ by $H_{V,g}$, $D_{V,g}$.

\begin{lem}\label{lem:width}
Let $(N,g)$ be a Riemannian manifold without boundary.
\begin{enumerate}
\item[(1)] For any open set $\Omega \subset N$, $\capp_R(\Omega,g) \le \capp_R(N,g)$.
\item[(2)] $\capp_R(N,g)=\sup \bigl\{ \capp_R(\Omega,g) \bigm{|} \text{$\Omega \subset N$ is a open set such that $\bar{\Omega}$ is compact} \bigr\}$.
\item[(3)] Let $a$ be a positive number. Then $\capp_R(N,ag)=a \cdot \capp_R(N,g)$, where $ag$ is defined by $(ag)(v):=a \cdot g(v)\,(v \in TN)$.
\item[(4)] Let $g'$ be a Riemannian metric on $N$, and assume that $g \le g'$ (which means that $g(v) \le g'(v)$ for any 
$v \in TN$). Then $\capp_R(N,g) \le \capp_R(N,g')$.
\end{enumerate}
\end{lem}
\begin{proof}
(1) and (2) are clear from the definition. 
(3) follows from $D_{V,ag}=\bigl\{(q,ap) \bigm{|} (q,p) \in D_{V,g} \bigr\}$.
(4) follows from $g \le g' \implies D_{V,g} \subset D_{V,g'}$.
\end{proof}

\begin{prop}\label{prop:covering}
Let $(N,g)$ be a Riemannian manifold without boundary, and 
$\pi: M \to N$ be a covering map such that $\deg \pi <\infty$.
Then, $\capp_R(M,\pi^*g) \le \capp_R(N,g)$.
\end{prop}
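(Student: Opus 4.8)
The plan is to lift $\pi$ to a finite covering of cotangent bundles, apply Theorem~\ref{thm:covering} to the Liouville domains produced by Lemma~\ref{lem:DV}, and bridge the gap between arbitrary potentials on $M$ and potentials pulled back from $N$ via Theorem~\ref{thm:monotonicity} (in the form of Lemma~\ref{lem:DV}). The first step is to introduce the cotangent lift $\Pi\colon T^*M\to T^*N$, $\Pi(q,p):=\bigl(\pi(q),\,p\circ(d\pi_q)^{-1}\bigr)$. Since $\pi$ is a covering with $\deg\pi<\infty$ and each $d\pi_q$ is a linear isomorphism, $\Pi$ is a covering map with $\deg\Pi=\deg\pi$. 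The identity $\pi_N\circ\Pi=\pi\circ\pi_M$ yields $\Pi^*\lambda_N=\lambda_M$, hence $\Pi^*\omega_N=\omega_M$; and once $M$ is given the metric $\pi^*g$, $\Pi$ restricts on each fibre to a linear isometry $T_q^*M\to T_{\pi(q)}^*N$, so that $H_{V,g}\circ\Pi=H_{V\circ\pi,\pi^*g}$ and $\Pi^{-1}(D_{V,g})=D_{V\circ\pi,\pi^*g}$ for every $V\in C^\infty(N)$.

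Next, for $V\in\mca{V}(N)$ with $V>-\tfrac12$, one checks that $V\circ\pi\in\mca{V}(M)$ with $V\circ\pi>-\tfrac12$: its zero set is $\pi^{-1}(\{V\le0\})$, which is compact because $\pi$ has finite degree, and $0$ is a regular value since $d\pi$ is invertible. Choosing $\lambda$ on $T^*N$ with $d\lambda=\omega_N$ and $(D_{V,g},\lambda)$ a Liouville domain (Lemma~\ref{lem:DV}), and applying Theorem~\ref{thm:covering} to the finite covering $\Pi\colon\Pi^{-1}(D_{V,g})\to D_{V,g}$, we get that $(D_{V\circ\pi,\pi^*g},\Pi^*\lambda)$ is a Liouville domain with $\capp_S(D_{V\circ\pi,\pi^*g},\Pi^*\lambda)\le\capp_S(D_{V,g},\lambda)$. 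Since $d(\Pi^*\lambda)=\omega_M$, Theorem~\ref{thm:dlambda} converts this into $\capp_S(D_{V\circ\pi,\pi^*g},\omega_M)\le\capp_S(D_{V,g},\omega_N)\le\capp_R(N,g)$.

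The crux is the geometric claim: every $W\in\mca{V}(M)$ with $W>-\tfrac12$ satisfies $W>V\circ\pi$ for some $V\in\mca{V}(N)$ with $V>-\tfrac12$. Set $\tilde W(n):=\min\{W(q)\mid q\in\pi^{-1}(n)\}$; reading this over an evenly covered neighbourhood (where it is a finite minimum of $W\circ(\text{local sections})$) shows $\tilde W$ is continuous, and clearly $\tilde W>-\tfrac12$ while $\{\tilde W\le0\}=\pi(\{W\le0\})$ is compact. Choose an open $U\supset\{\tilde W\le0\}$ with $\bar U$ compact, so that $\tilde W>0$ on $N\setminus U$. A standard partition-of-unity construction — smooth $\tilde W$ slightly downward over $U$, patch it with a smooth positive function lying below $\tilde W$ on $N\setminus\{\tilde W\le0\}$, then perturb generically (an open condition) — produces a smooth $V$ with $-\tfrac12<V<\tilde W$ on $N$, with $V>0$ off $U$, and with $0$ a regular value. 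Then $\{V\le0\}$ is a closed subset of $\bar U$, hence compact, so $V\in\mca{V}(N)$, and $V\circ\pi<\tilde W\circ\pi\le W$. Given such $V$, both $W$ and $V\circ\pi$ lie in $\mca{V}(M)$ with $W>V\circ\pi$, so the second part of Lemma~\ref{lem:DV} applied to $(M,\pi^*g)$ gives $\capp_S(D_{W,\pi^*g},\omega_M)\le\capp_S(D_{V\circ\pi,\pi^*g},\omega_M)\le\capp_R(N,g)$; taking the supremum over all such $W$ yields $\capp_R(M,\pi^*g)\le\capp_R(N,g)$.

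I expect the only genuine obstacle to be the geometric claim in the third step: one must transport the compactness of $\{W\le0\}$ down the covering while simultaneously keeping $\{V\le0\}$ compact and maintaining $V>-\tfrac12$, and since $\tilde W$ is in general only continuous the smoothing has to be carried out with some care. The cotangent-lift facts ($\Pi^*\lambda_N=\lambda_M$, the fibrewise isometry with respect to $\pi^*g$) are standard, and the capacity estimates are immediate from the theorems already proved.
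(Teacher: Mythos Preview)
Your proof is correct and follows the same route as the paper: construct a potential on $N$ below the given one on $M$, apply Lemma~\ref{lem:DV} for monotonicity and Theorem~\ref{thm:covering} for the covering inequality on the resulting Liouville domains. The paper's argument is simply terser---it asserts the existence of the downstairs potential and the inequality $\capp_S(D_{\pi^*W},\omega_M)\le\capp_S(D_W,\omega_N)$ without spelling out the cotangent lift or the fibrewise-minimum construction, both of which you make explicit.
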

\begin{proof}
For any $V \in \mca{V}(M)$ such that $V>-1/2$, 
there exists $W \in \mca{V}(N)$ such that $W>-1/2$ and 
$V>W \circ \pi$. Hence 
\[
\capp_S(D_V,\omega_M) \le \capp_S (D_{W \circ \pi}, \omega_M) \le \capp_S (D_W,\omega_N) \le \capp_R(N,g).
\]
The first inequality follows from lemma \ref{lem:DV}, the second inequality follows from 
theorem \ref{thm:covering}, and the last inequality is clear from the definition of $\capp_R$.
Therefore $\capp_R(M,\pi^*g)  \le \capp_R(N,g)$.
\end{proof}

\subsection{Capacity of interiors of compact Riemannian manifolds with boundaries}

The goal of this subsection is to prove the following theorem:

\begin{thm}\label{thm:w<infty}
Let $N$ be a compact connected Riemannian manifold with non-empty boundary. Then, $\capp_R(\interior N)<\infty$.
\end{thm}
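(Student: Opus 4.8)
The plan is to bound $\capp_R(\interior N)$ by a single finite quantity: I will produce one Liouville domain $X_0$ with $\capp_S(X_0)<\infty$ which contains every $D_V$ (for $V\in\mca{V}(\interior N)$ with $V>-\tfrac12$) as a codimension-zero subdomain with a \emph{compatible} Liouville form, and then apply the monotonicity Theorem \ref{thm:monotonicity} together with Theorem \ref{thm:dlambda} (which lets me change Liouville primitives freely) to get $\capp_S(D_V,\omega_N)\le\capp_S(X_0)$ for all admissible $V$. Taking the supremum in Definition \ref{defn:width} then gives $\capp_R(\interior N)<\infty$.

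\textbf{Construction of $X_0$.} First I would compactify the base: let $\hat N$ be the double of $N$, a closed manifold containing $N$ as a compact subset whose interior is $\interior N$. Then $T^*\hat N$ carries its canonical Liouville form $\lambda_{\hat N}$, which restricts to $\lambda_N$ on $T^*(\interior N)$, and $c_1\bigl(T(T^*\hat N)\bigr)=0$. If $V>-\tfrac12$ then $D_V=\{V(q)+|p|^2/2\le 0\}$ is contained in $\{q\in\{V\le 0\},\ |p|_g<1\}$, hence in the relatively compact set $\{q\in N,\ |p|_g\le 1\}\subset T^*\hat N$ \emph{for every} such $V$. Exactly as in the proof of Lemma \ref{lem:DV}, pick a vector field $\xi_0$ on $\hat N$ supported near $\partial N$ and pointing out of $N$ along $\partial N$, and fix a defining function whose $0$-sublevel set $X_0\subset T^*\hat N$ is a compact domain containing $\{q\in N,\ |p|_g\le 1\}$ and on whose boundary $\nu_N+\delta\tilde\xi_0$ is outward-transverse for small $\delta$ (this is a routine smoothing-of-corners argument, since $\nu_N$ is outward-transverse on the ``sphere-bundle part'' of the boundary and $\tilde\xi_0$ contributes the missing transversality over $\partial N$). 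Then $(X_0,\lambda_0:=i_{\nu_N+\delta\tilde\xi_0}\omega_{\hat N})$ is a Liouville domain with $d\lambda_0=\omega_{\hat N}$, $c_1=0$, and $D_V\subset\interior X_0$ for all admissible $V$.

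\textbf{Finiteness of $\capp_S(X_0)$.} This is the crucial step. Since $N$ is a compact manifold with nonempty boundary, it is homotopy equivalent to a CW complex of dimension $\le n-1$ (e.g.\ a Morse function approximating $-\,\mathrm{dist}(\cdot,\partial N)$ has no interior local maximum, hence no index-$n$ critical point). Consequently $X_0$, being a disk-cotangent-bundle-type neighbourhood of the zero section over $N$, is after a Liouville homotopy a subcritical Weinstein domain, so it admits a codimension-zero Liouville embedding $\varphi$ into a ball $B^{2n}(R)$ for $R$ large. By Theorem \ref{thm:dlambda}, $\capp_S(X_0)=\capp_S\bigl(X_0,\varphi^*\lambda_n\bigr)=\capp_S\bigl(\varphi(X_0),\lambda_n\bigr)$, and by Theorem \ref{thm:monotonicity} this is $\le\capp_S\bigl(B^{2n}(R),\lambda_n\bigr)$, which is finite by Theorem \ref{thm:ball} and Lemma \ref{lem:capacity}(2). (Alternatively one may quote directly that subcritical Weinstein domains have vanishing symplectic homology and conclude by Lemma \ref{lem:capacity}(2).)

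\textbf{Comparison and conclusion.} Now fix $V>-\tfrac12$. Since $V^{-1}(0)$ is a compact subset of $\interior N$, disjoint from $\partial N$, I can choose a vector field $\xi_V$ on $\hat N$ agreeing with $\xi_0$ near $\partial N$ and with $dV(\xi_V)>0$ on $V^{-1}(0)$. For small $\delta>0$, $Z:=\nu_N+\delta\tilde\xi_V$ is still outward-transverse to $\partial X_0$, and on $\partial D_V=H_V^{-1}(0)$ one has $dH_V(Z)=|p|_g^2+\delta\,dH_V(\tilde\xi_V)$, which is positive: away from the zero section $|p|_g^2>0$ dominates, while at points of $\partial D_V$ with $p=0$ (i.e.\ on $V^{-1}(0)$) one has $dH_V(\tilde\xi_V)=dV(\xi_V)>0$. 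Thus $\lambda_V:=i_Z\omega_{\hat N}$ makes both $(X_0,\lambda_V)$ and $(D_V,\lambda_V|_{D_V})$ Liouville domains, with common symplectic form $\omega_{\hat N}$. Theorem \ref{thm:monotonicity} gives $\capp_S(D_V,\lambda_V|_{D_V})\le\capp_S(X_0,\lambda_V)$, and Theorem \ref{thm:dlambda} identifies the left-hand side with $\capp_S(D_V,\omega_N)$ and the right-hand side with $\capp_S(X_0)<\infty$. Hence $\capp_R(\interior N)=\sup_V\capp_S(D_V,\omega_N)<\infty$. The one genuine difficulty is the third paragraph — the fact that the disk cotangent bundle of a compact manifold with nonempty boundary is subcritical (equivalently embeds in a ball, equivalently has vanishing symplectic homology); everything else is a combination of Theorems \ref{thm:monotonicity} and \ref{thm:dlambda} with the elementary geometry of the domains $D_V$.
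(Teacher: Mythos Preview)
Your argument is correct and, at its core, the same as the paper's: both enclose every $D_V$ (with $V>-\tfrac12$) inside a single ``big'' Liouville domain built over a compactification of $N$, use the Liouville-form trick of Lemma~\ref{lem:DV} plus Theorems~\ref{thm:dlambda} and~\ref{thm:monotonicity} to compare, and then show the big domain has finite capacity because it is \emph{subcritical} --- ultimately invoking Cieliebak's handle theorem \cite{Cieliebak}. The paper organizes this slightly differently: it first observes that finiteness of $\capp_R(\interior N)$ is metric-independent (so one may embed $N$ isometrically into some closed $(N',g')$), takes as the big domain another $D_V$ with $\{V\le 0\}\supset N$, and then proves the needed vanishing $\SH_*(D_V)=0$ by an explicit handle-by-handle argument (Lemma~\ref{lem:SH=0}, via a Morse function on $\{V\le 0\}$ with no index-$n$ critical points), together with an invariance lemma (Lemma~\ref{lem:invariance}) showing $\SH_*(D_V)$ depends only on the diffeomorphism type of $\{V\le 0\}$. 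Keeping the big domain of the form $D_V$ lets the paper reuse Lemma~\ref{lem:DV} verbatim for the comparison step and avoids your smoothing-of-corners construction of $X_0$.

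One caution: of your two routes to $\capp_S(X_0)<\infty$, the ``embed in a ball'' route is the shaky one --- the existence of a codimension-zero Liouville embedding of an arbitrary subcritical Weinstein domain into $B^{2n}(R)$ is not established by anything cited in the paper, and it is not obvious. Your parenthetical alternative (subcritical Weinstein $\Rightarrow \SH_*=0 \Rightarrow \capp_S<\infty$) is the correct route, and it is exactly what the paper's Lemma~\ref{lem:SH=0} proves from \cite{Cieliebak}, Theorem~1.11(1), by induction over the handles. So your ``one genuine difficulty'' is indeed the same one the paper isolates and proves in detail.
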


At first, notice the following consequence of theorem \ref{thm:isotopyinvariance}:

\begin{lem}\label{lem:invariance}
Let $N$ be a Riemannian manifold without boundary, and $V \in \mca{V}(N)$.
Then, $\SH_*(D_V, \omega_N)$ depends only on diffeomorphism type of $\{V \le 0\}$.
\end{lem}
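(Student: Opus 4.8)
The plan is to deform the two Liouville domains into one another through Liouville domains and to apply Theorem~\ref{thm:isotopyinvariance}. Suppose $V\in\mca{V}(N)$ and $V'\in\mca{V}(N')$ with $M:=\{V\le 0\}$ diffeomorphic to $M':=\{V'\le 0\}$; we must construct an isomorphism $\SH_*(D_V,\omega_N)\cong\SH_*(D_{V'},\omega_{N'})$. First I would reduce everything to intrinsic data on $M$. Since $M$ is a compact codimension-zero submanifold with boundary, $T^*N|_M=T^*M$, and $D_V=\{(q,p)\in T^*M\mid |p|_g^2+2V(q)\le 0\}$ with $\omega_N|_{D_V}=\omega_M=d\lambda_M$. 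By Lemma~\ref{lem:DV} a primitive of $\omega_M$ making $(D_V,\cdot)$ a Liouville domain is $i_{Z_\delta}\omega_M$ with $Z_\delta=\nu_M+\delta\tilde{\xi}$, where $\xi$ points out of $M$ along $\partial M$ and $\delta>0$ is small; and since $\SH_*$ of a Liouville domain depends only on $d\lambda$ (a consequence of Theorem~\ref{thm:isotopyinvariance}), the group $\SH_*(D_V,\omega_N)$ is unambiguous and depends only on the triple $(M,g|_M,V|_M)$, in which $V|_M<0$ on $\interior M$, $V|_M^{-1}(0)=\partial M$ and $0$ is a regular value.

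Next I would show this group does not depend on $V$, nor on the metric, once $M$ is fixed. Given admissible $V_0,V_1$ with $\{V_i\le 0\}=M$, set $V_t:=(1-t)V_0+tV_1$; then $\{V_t\le 0\}=M$ and $0$ remains a regular value for all $t$ (on $\interior M$ both $V_i<0$; along $\partial M$ both $dV_i$ are negative on inward normals, hence so is $dV_t$). The fiberwise scalar map $\Psi_t\colon T^*M\to T^*M$, $(q,p)\mapsto\bigl(q,\sqrt{V_t(q)/V_0(q)}\,p\bigr)$, is a diffeomorphism with $\Psi_0=\id$ and $\Psi_t(D_{V_0})=D_{V_t}$; it extends smoothly across $\partial M$ because, in a boundary-normal coordinate $x_n$, $V_i=x_n u_i$ with $u_i$ smooth and negative near $\partial M$, so $V_t/V_0=u_t/u_0$ is smooth and positive there. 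Choosing $\xi,\delta$ valid for all $V_t$ at once and putting $\mu_t:=\Psi_t^*(i_{Z_\delta}\omega_M)$ on $X:=D_{V_0}$, the family $(X,\mu_t)_{0\le t\le 1}$ consists of Liouville domains with $(X,\mu_0)=(D_{V_0},i_{Z_\delta}\omega_M)$ and $(X,\mu_1)$ equivalent to $(D_{V_1},i_{Z_\delta}\omega_M)$, so Theorem~\ref{thm:isotopyinvariance} gives $\SH_*(D_{V_0},\omega_M)\cong\SH_*(D_{V_1},\omega_M)$. Running the same argument with $h_t:=(1-t)h_0+th_1$ and the fiberwise-linear map on each $T_q^*M$ that carries the $h_0$-ball to the $h_t$-ball (this map is scale-invariant, hence independent of $V$, and smooth in $q$ up to $\partial M$) shows the metric may likewise be varied freely.

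It then remains to treat a diffeomorphism $\varphi\colon M\to M'$. Its cotangent lift $\Phi=(d\varphi^{-1})^*\colon T^*M\to T^*M'$ is a symplectomorphism with $\Phi^*\lambda_{M'}=\lambda_M$ and a fiberwise isometry from the $\varphi^*h'$-norm to the $h'$-norm, so $\Phi(D_{\varphi^*V',\,\varphi^*h'})=D_{V',h'}$ and $\Phi$ identifies the corresponding Liouville domains (transporting the primitive along $\Phi$ changes nothing, as in the first paragraph). Combining this with the preceding step, applied once over $M$ and once over $M'$, yields $\SH_*(D_V,\omega_N)\cong\SH_*(D_{V'},\omega_{N'})$, which is the assertion.

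The delicate point throughout is the behaviour over $\partial M$: the fibres of $D_V$ collapse to points there, so $\lambda_M$ itself is not a Liouville form on $D_V$ --- its Liouville vector field vanishes on the part of $\partial D_V$ lying over $\partial M$ --- which forces one to work with the corrected primitive $i_{Z_\delta}\omega_M$ from Lemma~\ref{lem:DV} and to verify that the interpolating maps $\Psi_t$ and their metric analogues remain smooth and that the outward-pointing and convexity conditions can be imposed uniformly in $t$ near the boundary.
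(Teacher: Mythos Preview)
Your proof is correct and follows essentially the same approach as the paper: linearly interpolate the potential and the metric, verify that the resulting domains form a smooth family of Liouville domains, and invoke Theorem~\ref{thm:isotopyinvariance}. The differences are cosmetic --- the paper interpolates $W$ and $g$ simultaneously inside an ambient boundaryless manifold $K'\supset K$ and leaves the identification of the moving domains $D_{W_t,g_t}$ with a fixed model implicit, whereas you work intrinsically on $M$, treat $V$ and the metric in two separate steps, and make the identification explicit via the fiberwise rescalings $\Psi_t$; your explicit check that $V_t/V_0$ extends smoothly across $\partial M$ is exactly what justifies the step the paper glosses over.
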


We prove the following lemma:

\begin{lem}\label{lem:SH=0}
Let $N$ be a Riemannian manifold without boundary, and $V \in \mca{V}(N)$.
Assume that $\{V \le 0\}$ is connected and $\{V=0\} \ne \emptyset$.
Then, $\SH_*(D_V,\omega_N)=0$. In particular, $\capp_S (D_V,\omega_N)<\infty$.
\end{lem}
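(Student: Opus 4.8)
The plan is to use Lemma~\ref{lem:invariance} to replace $(N,V)$ by a convenient model in which $D_V$ is visibly a \emph{subcritical} Weinstein domain, and then to invoke the vanishing of symplectic homology for such domains.

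By Lemma~\ref{lem:invariance}, $\SH_*(D_V,\omega_N)$ depends only on the diffeomorphism type of the compact connected manifold $K:=\{V\le 0\}$, which by hypothesis has non-empty boundary $\partial K=\{V=0\}$. A compact connected manifold with non-empty boundary is homotopy equivalent to a CW complex of dimension $\le n-1$; more precisely, $K$ admits a handle decomposition all of whose handles have index $\le n-1$, equivalently a Morse function $f\colon K\to\R$ (with gradient-like field $\nabla f$ for a metric $g_K$) having no interior local maximum, so that every critical point of $f$ has index $\le n-1$. I would fix such data, extend $(K,g_K)$ to a Riemannian manifold $N$ without boundary as in the proof of Lemma~\ref{lem:invariance} (which leaves $\SH_*(D_V,\omega_N)$ unchanged), and take the auxiliary vector field of Lemma~\ref{lem:DV} to be $\xi=\nabla f$.

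Next I would verify that, for $\delta>0$ small, the Liouville domain $(D_V,\lambda_\delta)$ with $\lambda_\delta=i_{Z_\delta}\omega_N$ and $Z_\delta=\nu_N+\delta\tilde{\xi}$ (as in Lemma~\ref{lem:DV}) is a Weinstein domain: the vector field $Z_\delta$ is gradient-like for the function $\varphi(q,p):=\tfrac{1}{2}|p|^2+\delta f(q)$ on $D_V$, whose critical points are exactly the points $(q,0)$ with $q\in\Crit(f)$, and the critical point lying over a critical point of $f$ of index $k$ has Morse index $k\le n-1<n=\tfrac{1}{2}\dim D_V$. Hence $D_V$ carries a Weinstein handle decomposition with only subcritical handles. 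It is a well-known theorem of Cieliebak that the symplectic homology of a subcritical Weinstein domain vanishes, so $\SH_*(D_V,\omega_N)=0$. By Theorem~\ref{thm:isotopyinvariance} this conclusion is independent of the choices of $f$, $g_K$, $N$ and $\delta$. Finally, $\capp_S(D_V,\omega_N)<\infty$ by Lemma~\ref{lem:capacity}(2).

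The step I expect to be the main obstacle is the middle one. Since the cotangent fibre of $D_V$ shrinks to a point over $\partial K$, one must check with some care that, after a Liouville isotopy, $D_V$ is genuinely a smooth Weinstein domain and that a handle decomposition of $K$ with handles of index $\le n-1$ really does induce a Weinstein handle decomposition of $D_V$ with handles of the same indices. Once this identification is established, the vanishing of $\SH_*$ follows at once from the subcritical case.
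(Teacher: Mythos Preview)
Your proposal is correct and follows essentially the same strategy as the paper: both arguments reduce, via Lemma~\ref{lem:invariance}, to a Morse function on $K=\{V\le 0\}$ with all critical points of index $\le n-1$, and then appeal to Cieliebak's result that subcritical Weinstein handle attachment leaves $\SH_*$ unchanged. The difference is one of packaging. You invoke the statement ``subcritical Weinstein domain $\Rightarrow$ $\SH_*=0$'' in one stroke, after arguing that $(D_V,\lambda_\delta)$ is itself such a domain; the paper instead runs the induction explicitly, showing that the sublevel set $D_{\le a}=\{H_W\le a\}$ has $\SH_*=0$ for $a$ just above $\min W$ (where it is a ball, so Theorem~\ref{thm:ball} applies), and that $\SH_*(D_{\le a})$ is unchanged as $a$ crosses each critical value $W(P_j)$, by identifying this crossing with a $k$-handle attachment along an isotropic $\Sigma_\varepsilon\cong S^{k-1}$ and citing \cite{Cieliebak}, Theorem~1.11(1).

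The paper's inductive version has a practical advantage over yours precisely at the point you flag as delicate: it never needs to exhibit $D_V$ globally as a smooth Weinstein domain with the degenerate fibre over $\partial K$. Each $D_{\le a}$ for $a<0$ is already a Liouville domain by Lemma~\ref{lem:DV}, the handle attachment is local near the critical point and manifestly subcritical, and the passage from the top regular sublevel to $D_V=D_{\le 0}$ involves no critical points. Your approach is cleaner to state but requires exactly the verification you anticipate; the paper's approach trades that for an elementary induction.
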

\begin{proof}
First note that the second assertion follows from the first assertion by lemma \ref{lem:capacity}-(2).
We prove the first assertion.
By lemma \ref{lem:invariance}, for any $W \in \mca{V}(N)$ such that 
$\{W \le 0\}=\{V \le 0\}$, $\SH_*(D_V,\omega_N) \cong \SH_*(D_W,\omega_N)$.
Since $\{V \le 0\}$ is a compact connected manifold with non-empty boundary, we can 
take $W$ so that it is a Morse function and 
$\{P_1,\ldots,P_m\}:=\Crit(W) \cap \{W \le 0\}$ satisfies the following:
\begin{itemize}
\item $\min W= W(P_1)<W(P_2)<\cdots<W(P_m)<0$.
\item $\ind P_j \le n-1$ for all $1 \le j \le m$.
\item $\ind P_j =0$ if and only if $j=1$.
\end{itemize}

To complete the proof, we extend the definition of symplectic homology.
Let $(X,\lambda)$ be a Liouville domain and $H$ be a Hamiltonian on its completion. 
Then, let $\HF_*^{\all}(H)$ be the homology of $\bigl(C_*^\all(H),\partial\bigr)$, 
where $C_*^\all(H)$ is a $\Z_2$-graded free $\Z_2$ module generated by all (not only contractible) periodic 
orbits of $X_H$. We define $\SH_*^\all(X,\lambda):= \varinjlim_{H} \HF_*^\all(H)$.
Obviously, $\SH_*^\all(X,\lambda)=0 \implies \SH_*(X,\lambda)=0$.

In the following, we prove that $\SH_*^\all(D_W,\omega_N)=0$.
For $a \in \R$, abbreviate $\{H_W \le a\} \subset T^*N$ by $D_{\le a}$. 
Then, for any $a \in \bigl(W(P_1), W(P_2)\bigr)$, 
\[
\SH_*(D_{\le a},\omega_N) \cong \SH_*\bigl(B^{2n}(1),\omega_n\bigr)=0.
\]
The first isomorphism follows from lemma \ref{lem:invariance}, and the second equality follows from 
theorem \ref{thm:ball}.
Since $D_{\le a}$ is simply connected, $\SH_*^\all(D_{\le a},\omega_N)=0$.

Hence it is enough to show that for any $j \in \{2,\ldots,m\}$, 
$a \in \bigl(W(P_{j-1}), W(P_j)\bigr)$ and 
$b \in \bigl(W(P_j), W(P_{j+1})\bigr)$, the isomorphism 
$\SH_*^\all(D_{\le a},\omega_N) \cong \SH_*^\all(D_{\le b},\omega_N)$ holds.
By lemma \ref{lem:invariance}, it is enough to show that there exists $\varepsilon>0$ such that
$\SH_*^\all(D_{\le W(P_j)-\varepsilon},\omega_N) \cong \SH_*^\all(D_{\le W(P_j)+\varepsilon},\omega_N)$.

Let $j \in \{2,\ldots,m\}$ and set $k:=\ind P_j$.
Take a local coordinate $(q_1,\ldots,q_n)$ around $P_j$ such that $P_j$ corresponds to $(0,\ldots,0)$ and
\[
W(q_1,\ldots,q_n)= W(P_j)-\sum_{1 \le i \le k} q_i^2 + \sum_{k+1 \le i \le n} q_i^2.
\]
Take $\varepsilon>0$ sufficiently small, and let
\[
\Sigma_\varepsilon:=\bigl\{(q_1,\ldots,q_k,0,\ldots,0) \in N \bigm{|} q_1^2+ \cdots +q_k^2 = \varepsilon \bigr\}.
\]
For $\delta>0$, let $Z_\delta:=\nu_N + \delta \widetilde{\nabla{W}}$, and
$\lambda_\delta:=i_{Z_\delta}\omega_N$.
Then, $(D_{\le W(P_j)\pm\varepsilon},\lambda_\delta)$ are 
Liouville domains for sufficiently small $\delta>0$, and
$\Sigma_\varepsilon$ is an isotropic submanifold of $(\partial D_{\le W(P_j)-\varepsilon}, \lambda_\delta)$.
Moreover, $(D_{\le W(P_j)+\varepsilon},\lambda_\delta)$ is isotopic as Liouville domain (see definition \ref{defn:isotopic}) to the
Liouville domain obtained by attaching $k$-handle to $(D_{\le W(P_j)-\varepsilon},\lambda_\delta)$ along
$\Sigma_\varepsilon$ in the sense of \cite{Weinstein}.
Hence by theorem 1.11 (1) in \cite{Cieliebak},
\[
\SH_*^\all(D_{\le W(P_j)-\varepsilon},\omega_N) \cong \SH_*^\all(D_{\le W(P_j)+\varepsilon},\omega_N).
\]
This completes the proof.
\end{proof}
\begin{rem}
The above proof shows that $(D_W,\omega_N)$ carries a structure of so called "subcritical Weinstein domain".
\end{rem}

Finally, we prove theorem \ref{thm:w<infty}.

\begin{proof}
For any Riemannian metrics $g$ and $g'$ on $N$, there exists $a>0$ such that $g \le ag'$ since $N$ is compact.
Then, $\capp_R(\interior N,g) \le a\cdot\capp_R(\interior N, g')$.
Therefore it is enough to show that there exists a Riemannian metric $g$ on $N$ such that 
$\capp_R(\interior N, g)<\infty$.

Take a Riemannian manifold $(N',g')$ without boundary, and 
an embedding $i: N \hookrightarrow N'$. 
We show that $\capp_R(\interior N, i^*g')<\infty$.

Since $N$ is a compact connected manifold with non-empty boundary, 
there exists $V \in \mca{V}(N')$ such that $V \circ i<-1/2$ and 
$\{V \le 0\}$ is connected, $\{V=0\} \ne \emptyset$.
For any $W \in \mca{V}(\interior N)$ such that $W>-1/2$, 
$\capp_S(D_W,\omega_N) \le \capp_S (D_V,\omega_{N'})$.
Hence $\capp_R(\interior N) \le \capp_S (D_V,\omega_{N'})$.
On the other hand, $\capp_S (D_V,\omega_{N'})<\infty$ by lemma \ref{lem:SH=0}.
This completes the proof.
\end{proof}

The following corollary of theorem \ref{thm:w<infty} plays an important role in the next subsection.

\begin{cor}\label{cor:w<infty}
Let $N$ be a compact connected Riemannian manifold (possibly with boundary), and $x \in \interior N$.
Then, $\capp_R\bigl(\interior N \setminus \{x\}\bigr)<\infty$.
\end{cor}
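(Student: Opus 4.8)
The plan is to realize $\interior N\setminus\{x\}$ as the interior of a compact connected manifold with non-empty boundary --- the spherical blow-up $\tilde N$ of $N$ at $x$ --- and then apply Theorem \ref{thm:w<infty}. The only point requiring care is that the ambient metric degenerates at the blown-up point, so it must first be dominated by a metric that extends smoothly over the new boundary; granting this, the corollary is immediate.

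I may assume $n\geq 2$ (for $n=1$ the space $\interior N\setminus\{x\}$ is a finite disjoint union of open intervals and the statement is elementary). Choose $\varepsilon>0$ smaller than the injectivity radius at $x$, so $\exp_x$ is a diffeomorphism of the $\varepsilon$-ball in $T_xN$ onto a metric ball $B\subset\interior N$. Let $\tilde N$ be the compact manifold formed by gluing the collar $[0,\varepsilon)\times S^{n-1}$, with $\{0\}\times S^{n-1}$ regarded as a new boundary component, to $N\setminus\{x\}$ along the map $(r,\theta)\mapsto\exp_x(r\theta)$ from $(0,\varepsilon)\times S^{n-1}$ onto $B\setminus\{x\}$. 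Then $\tilde N$ is compact and connected (here I use $n\geq 2$), its boundary $\partial\tilde N=\partial N\sqcup S^{n-1}$ is non-empty, and there is a diffeomorphism $\Psi\colon\interior\tilde N\xrightarrow{\sim}\interior N\setminus\{x\}$ equal to $(r,\theta)\mapsto\exp_x(r\theta)$ on the collar.

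Next I would compare metrics near the exceptional boundary. In geodesic polar coordinates about $x$ the given metric $g$ takes the form $g=dr^2+g_r$, where $g_r$ is a smooth family of metrics on $S^{n-1}$ which degenerates to second order in $r$ as $r\to0$; in particular, after shrinking $\varepsilon$, there is a smooth metric $h$ on $S^{n-1}$ and a constant $C\geq1$ with $g_r\leq Cr^2h\leq Ch$ for $0<r<\varepsilon$. Hence $\Psi^*g\leq C(dr^2+h)$ on the punctured collar, and $dr^2+h$ extends to a smooth metric on a full collar of $\partial\tilde N$, hence to a smooth metric $\tilde g$ on $\tilde N$. On the complement of the collar, which is a compact manifold with boundary, $\Psi^*g$ and $\tilde g$ are both smooth metrics and therefore comparable, so after enlarging $C$ we get $\Psi^*g\leq C\,\tilde g$ on all of $\interior\tilde N$.

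Finally, $\Psi$ is an isometry of $(\interior\tilde N,\Psi^*g)$ onto $\interior N\setminus\{x\}$ and $\capp_R$ is an isometry invariant, so Lemma \ref{lem:width} (parts 3 and 4) yields
\[
\capp_R\bigl(\interior N\setminus\{x\}\bigr)=\capp_R(\interior\tilde N,\Psi^*g)\leq\capp_R(\interior\tilde N,C\tilde g)=C\cdot\capp_R(\interior\tilde N,\tilde g),
\]
and the last quantity is finite by Theorem \ref{thm:w<infty}, applied to the compact connected manifold $\tilde N$ with non-empty boundary. The main obstacle is precisely the metric comparison in the third paragraph; everything else is formal. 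In particular, the essential input is the normal form $g=dr^2+g_r$ with $g_r$ vanishing to order $r^2$ at the blown-up point, which is what lets a degenerating metric be bounded above by one that lives on the blow-up.
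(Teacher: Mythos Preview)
Your proof is correct and follows essentially the same strategy as the paper: replace $\interior N\setminus\{x\}$ by the interior of its spherical blow-up $\tilde N$ (a compact connected manifold with non-empty boundary), dominate the pulled-back metric by one extending smoothly over the new boundary sphere, and invoke Theorem \ref{thm:w<infty} together with Lemma \ref{lem:width}. The only cosmetic difference is that the paper first uses compactness of $N$ to replace $g$ by a metric which is flat on a ball around $x$, making the collar comparison explicit via an interpolating radial function, whereas you work directly in geodesic polar coordinates and use the Gauss-lemma normal form $g=dr^2+g_r$ with $g_r=O(r^2)$; both routes yield the same inequality $\Psi^*g\le C\tilde g$ and the argument is otherwise identical.
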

\begin{proof}
Let $n:=\dim N$.
When $n=1$, the assertion is easily confirmed.
Hence in the following, we consider the case $n \ge 2$.
It is enough to show that there exists a Riemannian metric $g$ on $N$ such that
$\capp_R\bigl(\interior N \setminus \{x\},g\bigr)<\infty$.

Let $U$ be a coordinate neighborhood containing $x$, and $(q_1,\ldots,q_n)$ be a local coodinate on $U$, 
such that $x$ corresponds to $(0,\ldots,0)$.
We may assume that $B(x,1) \subset U$ and 
$dg=\sum_{1 \le j \le n} dq_j^2$ on $U$.

Set $S:=\partial B(x,1) \subset U$, and $g_S:=g|_S$.
Take arbitrary smooth function $\mu:[0,1] \to \R_{>0}$ such that 
$\mu \equiv 1/2$ on $[0,1/3]$ and 
$\mu(r) = r$ on $[2/3, 1]$, and $\mu(r) \ge r$.
Consider a cylinder 
$C=S \times [0,1]$ equipped with a metric $h$ defined by 
\[
\big\lvert v+a\partial_r(z,r) \big\rvert_h
:=\bigl( |v(z)|_{g_S}^2 \mu(r)^2 + a^2 \bigr)^{1/2} \qquad(v \in TS, a \in \R).
\]
Then, 
\[
I: \bigl( S \times (2/3, 1] ,h \bigr) \to \bigl(B(x,1) \setminus B(x,2/3),g \bigr);\quad (z,r) \mapsto zr
\]
is an isometry. 

Let $(\tilde{N},\tilde{g})$ be a Riemannian manifold which is obtained by pasting $\bigl(N \setminus B(x,2/3), g\bigr)$ 
with $(C,h)$ via $I$.
Then, $\tilde{N}$ is a compact manifold with non-empty boundary (since $S \times \{0\} \subset \partial \tilde{N}$), 
and connected (since $N$ is connected and $n \ge 2$).
Hence theorem \ref{thm:w<infty} implies that 
$\capp_R(\interior \tilde{N},\tilde{g})<\infty$.

Define a diffeomorphism $J : \interior \tilde{N} \to \interior N \setminus \{x\}$ by
\[
J(y) := \begin{cases}
        y &\bigl( y \in N \setminus B(x,2/3) \bigr) \\ \\
       rz &\bigl( y=(z,r) \in C=S \times (0,1] \bigr)
       \end{cases}.
\]
Then, since $\mu(r) \ge r$, $J^*g \le \tilde{g}$. Hence
\[
\capp_R\bigl(\interior N \setminus \{x\}, g\bigr)
=\capp_R(\interior \tilde{N}, J^*g )
\le \capp_R(\interior \tilde{N}, \tilde{g}) <\infty.
\]
\end{proof}

\subsection{Capacity of domains in $\R^n$}

\begin{thm}\label{thm:Rn-Zn}
Let $g_n$ denote the flat metric on $\R^n$. Then,
$\capp_R \bigl(\R^n \setminus \Z^n, g_n) <\infty$.
\end{thm}
\begin{proof}
When $n=1$, $\capp_R(\R \setminus \Z, g_1)=\capp_R\bigl((0,1), g_1\bigr)<\infty$.
In the following, we assume that $n \ge 2$.

By lemma \ref{lem:width}-(2), 
it is enough to show that, for any bounded open set $\Omega$ in ${\R}^n \setminus {\Z}^n$, 
$\capp_R(\Omega)$ is bounded from above by some constant which depends only on $n$.

Let $\Omega$ be a bounded open set in $\R^n \setminus \Z^n$.
Then, for sufficiently large integer $l$, $\Omega \subset (-l,l)^n$.
Hence $\Omega$ can be considered as an open set in 
$(\R^n \setminus \Z^n)/2l{\Z}^n$.

Consider the natural covering map of degree $(2l)^n$:
$(\R^n \setminus \Z^n)/2l{\Z}^n \to (\R^n \setminus \Z^n)/{\Z}^n$.
Then 
\[
\capp_R(\Omega) \le \capp_R\bigl((\R^n \setminus \Z^n)/2l{\Z}^n\bigr)
\le \capp_R\bigl((\R^n \setminus \Z^n)/{\Z}^n \bigr).
\]
The first inequality follows from lemma \ref{lem:width}-(1), and the second inequality follows from proposition \ref{prop:covering}.
On the other hand, $(\R^n \setminus \Z^n)/\Z^n$ is $\R^n/\Z^n$ minus a point.
Hence, by corollary \ref{cor:w<infty}, 
$\capp_R\bigl((\R^n \setminus \Z^n)/\Z^n \bigr)<\infty$.
This completes the proof.
\end{proof}


\begin{thm}\label{thm:mainestimate}
For each integer $n$, there exists $c_0(n), c_1(n)>0$ such that for any non-empty open set $\Omega$ in ${\R}^n$,
\[
c_0(n) \le \frac{\capp_R(\Omega,g_n)}{r(\Omega)} \le c_1(n).
\]
\end{thm}
\begin{proof}
If $r<r(\Omega)$, then there exists $x \in \R^n$ such that $B(x,r) \subset \Omega$. 
Hence, by lemma \ref{lem:width}-(1) and (3), 
\[
\capp_R(\Omega) \ge \capp_R\bigl(B(x,r)\bigr) = r \cdot \capp_R\bigl(B(x,1)\bigr).
\]
Hence $\capp_R\bigl(B^n(1)\bigr) \le \frac{\capp_R(\Omega)}{r(\Omega)}$ for any $\Omega \ne \emptyset$.

Next we bound $\frac{\capp_R(\Omega)}{r(\Omega)}$ from above.
Take an arbitrary positive number $r$ so that $r>r(\Omega)$.
Then, for any $x \in {\R}^n$, $B(x,r) \setminus \Omega \ne \emptyset$.
For any $j=(j_1,\ldots,j_n) \in \Z^n$, take an arbitrary point $p_j$ on 
$B\bigl(4rj,r\bigr) \setminus \Omega$, where $4rj=(4rj_1,\ldots,4rj_n)$.
Then, $\capp_R(\Omega) \le \capp_R\bigl({\R}^n \setminus \{p_j\}_{j \in \Z^n}\bigr)$.

Take sufficiently large $\alpha_n>0$ so that for any $x \in B^n(1)$, there exists a diffeomorphism $\varphi$ on $B^n(2)$ 
with compact support such that $\varphi(x)=(0,\ldots,0)$, and $g_n \le \alpha_n \cdot \varphi^*g_n$.
Then, since $B(4ri,2r) \cap B(4rj,2r)=\emptyset$ when $i \ne j$, 
there exists a diffeomorphism 
$\psi: {\R}^n \setminus \{p_j\}_{j \in \Z^n} \to {\R}^n \setminus 4r{\Z}^n$ such that 
$g_n \le \alpha_n \cdot  \psi^*g_n$.
Then, 
\begin{align*}
&\capp_R\bigl({\R}^n \setminus \{p_j\}_{j \in \Z^n}, g_n\bigr)
\le \alpha_n \cdot \capp_R\bigl({\R}^n \setminus \{p_j\}_{j \in \Z^n}, \psi^*g_n \bigr) \\
&\qquad=\alpha_n \cdot \capp_R(\R^n \setminus 4r{\Z}^n, g_n)
=4\alpha_n r \cdot \capp_R(\R^n \setminus {\Z}^n, g_n).
\end{align*}
To sum up, 
\[
r>r(\Omega) \implies \capp_R(\Omega) \le 4\alpha_n r \cdot \capp_R(\R^n \setminus {\Z}^n).
\]
Hence $\frac{\capp_R(\Omega)}{r(\Omega)} \le 4\alpha_n  \cdot \capp_R(\R^n \setminus {\Z}^n) <\infty$.
\end{proof}

\section{Short periodic billiard trajectory}\label{sec:billiard}

The goal of this section is to prove the following theorem.

\begin{thm}\label{thm:billiard}
Let $\Omega$ be a bounded domain in ${\R}^n$ with smooth boundary.
Then, there exists a periodic billiard trajectory on $\Omega$ with at most $n+1$ bounce times
and length equal to $\capp_R(\Omega)$.
\end{thm}

Theorem \ref{thm:billiard} is exactly the same as property (B) of $\capp_R$ which we have introduced in the introduction.
Hence, as we have explained in the introduction, it completes the proof of our main theorem \ref{mainthm}.

We start to prove theorem \ref{thm:billiard}.
The proof heavily relies on the arguments in \cite{AlbersMazzucchelli}.
First we recall the settings in \cite{AlbersMazzucchelli}.
Fix $d_0 \in (0,1/2)$ so small that
$\dist_{\partial\Omega}: q \mapsto \min\bigl\{|q-q'| \bigm{|} q' \in \partial \Omega \bigr\}$ 
is smooth on $\{\dist_{\partial\Omega} \le 2d_0\}$.
Let $k: [0,\infty) \to [0,2d_0]$ be a smooth function such that $0 \le k' \le 1$, $k(x)=x$ if $x \le d_0$ and
$k$ is constant on $[2d_0,\infty)$. Then, we define a function $h \in C^\infty(\bar{\Omega})$ by 
$h(q):=k\bigl(\dist_{\partial \Omega}(q)\bigr)$, and define $U \in C^\infty(\Omega)$ by 
$U(q):=h^{-2}(q)$. Then, $U$ is a positive function on $\Omega$ which grows like $(\dist_{\partial \Omega})^{-2}$ 
near $\partial \Omega$ and is constant on the region $\bigl\{\dist_{\partial \Omega} \ge 2d_0 \bigr\}$.

For each $\varepsilon>0$, consider the modified Lagrangian
\[
L_\varepsilon: T\Omega \to \R; \, (q,v) \mapsto |v|^2/2 - \varepsilon U(q).
\]
For each energy value $E \in \R$ the free-time action functional $\mca{L}^E_\varepsilon: L^{1,2}(\R/\Z,\Omega) \times \R_{>0}
\to \R$ is given by 
\[
\mca{L}^E_\varepsilon(\Gamma,\tau):= \tau \int_0^1 \biggl[ L_\varepsilon\bigl(\Gamma(t),\tau^{-1}\partial_t\Gamma(t)\bigr)
+E \biggr] dt.
\]
For $(\Gamma,\tau) \in L^{1,2}(\R/\Z,\Omega) \times \R_{>0}$, let $\gamma$ be the corresponding $\tau$-periodic curve, i.e.
$\gamma: \R/\tau \Z \to \Omega;\,t \mapsto \Gamma(t/\tau)$.
Then, straightforward calculations show that $(\Gamma,\tau)$ is a critical point of $\mca{L}^E_\varepsilon$ if and only if 
$\gamma$ satisfies 
\[
\partial_t^2\gamma+ \nabla(\varepsilon U)(\gamma)=0
\]
with energy
\[
E_\varepsilon(\gamma):=|\partial_t\gamma|^2/2 + \varepsilon U(\gamma)=E.
\]
When $(\Gamma,\tau)$ is a critical point of $\mca{L}^E_\varepsilon$, 
$\mu_\Morse(\Gamma:\mca{L}^E_\varepsilon|_{L^{1,2}(\R/\Z) \times \{\tau\}})$ denotes the number of negative eigenvalues 
of the Hessian of $\mca{L}^E_\varepsilon|_{L^{1,2}(\R/\Z) \times \{\tau\}}$ at $(\Gamma,\tau)$.

For $\varepsilon>0$, define $H_\varepsilon : T^*\Omega \to \R$ by 
$H_\varepsilon(q,p):=\varepsilon U(q)+|p|^2/2$, and
$D_\varepsilon:=\{H_\varepsilon \le 1/2 \bigr\} \subset T^*\Omega$.

\begin{lem}\label{lem:index}
For any $\varepsilon>0$, there exists 
$(\Gamma_\varepsilon,\tau_\varepsilon)$, a critical point of $\mca{L}^{1/2}_\varepsilon$,
 which satisfies the following properties:
\begin{enumerate}
\item[(1)] $\int_{\R/\tau_\varepsilon \Z} | \partial_t\gamma_\varepsilon|^2dt = \capp_S(D_\varepsilon,\omega_\Omega)$.
\item[(2)] $\mu_{\Morse}(\Gamma_\varepsilon; \mca{L}^{1/2}_\varepsilon|_{L^{1,2}(\R/\Z) \times \{\tau_\varepsilon\}}) \le n+1$.
\end{enumerate}
\end{lem}
\begin{proof}
Take arbitrary $\lambda \in \Omega^1(T^*\Omega)$ such that $d\lambda=\omega_\Omega$ and
$(D_\varepsilon, \lambda)$ is a Liouville domain.
By corollary \ref{cor:reebchord}, there exists 
$x_\varepsilon:\R/\tau_\varepsilon \Z \to H_\varepsilon^{-1}(1/2)$, 
which is a periodic orbit of $X_{H_\varepsilon}$ and satisfies 
\[
\int_{\R/\tau_\varepsilon \Z} x_\varepsilon^*\lambda= \capp_S (D_\varepsilon, \omega_\Omega), \qquad
\mu_{\CZ}(x_\varepsilon) \le n+1.
\]
Define $\gamma_\varepsilon :\R/\tau_\varepsilon \Z \to \Omega$ and
$\Gamma_\varepsilon: \R/\Z \to \Omega$ by 
$\gamma_\varepsilon:=\pi_\Omega \circ x_\varepsilon$, $\Gamma_\varepsilon(t):=\gamma_\varepsilon(\tau_\varepsilon t)$.
Then, it is obvious that $(\Gamma_\varepsilon,\tau_\varepsilon)$ is a critical point of $\mca{L}_\varepsilon^{1/2}$.
Moreover, 
\[
\int_{\R/\tau_\varepsilon \Z} |\partial_t \gamma_\varepsilon|^2 dt 
=\int_{\R/\tau_\varepsilon \Z} x_\varepsilon^*\lambda_\Omega
=\int_{\R/\tau_\varepsilon \Z} x_\varepsilon^*\lambda
=\capp_S(D_\varepsilon, \omega_\Omega).
\]
In the second equality, we use that $x_\varepsilon$ is contractible in $T^*\Omega$ and
$d\lambda=d\lambda_\Omega$.

Finally,
\[
\mu_{\Morse}(\Gamma_\varepsilon; \mca{L}^{1/2}_\varepsilon|_{L^{1,2}(\R/\Z) \times \{\tau_\varepsilon\}})
=\mu_{\CZ}(x_\varepsilon).
\]
This identity follows from theorem 7.3.1 in \cite{Long}.
(2) follows immdiately from this identity and $\mu_{\CZ} (x_\varepsilon) \le n+1$.
\end{proof}

\begin{lem}\label{lem:period}
For each $\varepsilon>0$, take $(\Gamma_\varepsilon, \tau_\varepsilon)$, a critical point of $\mca{L}^{1/2}_\varepsilon$
which satisfies properties in lemma \ref{lem:index}.
Then, 
\[
0< \liminf_{\varepsilon \to 0} \tau_\varepsilon  \le \limsup_{\varepsilon \to 0} \tau_\varepsilon <\infty.
\]
\end{lem}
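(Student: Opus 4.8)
The plan is to work throughout with the closed curve $\gamma_\varepsilon\colon\R/\tau_\varepsilon\Z\to\Omega$, $t\mapsto\Gamma_\varepsilon(t/\tau_\varepsilon)$, associated to $(\Gamma_\varepsilon,\tau_\varepsilon)$. By the computations recalled before lemma \ref{lem:index} it satisfies $\partial_t^2\gamma_\varepsilon=-\nabla(\varepsilon U)(\gamma_\varepsilon)$ and has constant energy $\tfrac12|\partial_t\gamma_\varepsilon|^2+\varepsilon U(\gamma_\varepsilon)\equiv\tfrac12$, and by lemma \ref{lem:index}(1) it satisfies $\int_{\R/\tau_\varepsilon\Z}|\partial_t\gamma_\varepsilon|^2\,dt=\capp_S(D_\varepsilon,\omega_\Omega)=:A_\varepsilon$. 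The energy identity gives $|\partial_t\gamma_\varepsilon|^2=1-2\varepsilon U(\gamma_\varepsilon)\le1$, and integrating it over the period produces the bookkeeping identity
\[
\tau_\varepsilon=A_\varepsilon+2\varepsilon\int_{\R/\tau_\varepsilon\Z}U(\gamma_\varepsilon)\,dt .
\]
Two facts about $A_\varepsilon$ are needed. First, with $V_\varepsilon:=\varepsilon U-\tfrac12$ one has $D_\varepsilon=D_{V_\varepsilon}$, and for $\varepsilon$ small $V_\varepsilon\in\mca{V}(\Omega)$ and $V_\varepsilon>-\tfrac12$, so $A_\varepsilon\le\capp_R(\Omega)<\infty$ by definition \ref{defn:width} and theorem \ref{thm:mainestimate}. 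Second, for $\varepsilon<\varepsilon_0$ one has $D_{\varepsilon_0}\subset D_\varepsilon$, hence $A_\varepsilon\ge A_{\varepsilon_0}$ by lemma \ref{lem:DV}, and $A_{\varepsilon_0}>0$ by theorem \ref{thm:reebchord} (any periodic Reeb orbit has positive period). Since the integral term above is nonnegative, $\tau_\varepsilon\ge A_\varepsilon\ge A_{\varepsilon_0}>0$ for $\varepsilon<\varepsilon_0$, which already gives $\liminf_{\varepsilon\to0}\tau_\varepsilon>0$.

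For the upper bound it suffices, by the bookkeeping identity, to bound $\varepsilon\int U(\gamma_\varepsilon)\,dt$ uniformly in small $\varepsilon$, and for this I would use a virial identity. Recall that $d_0$ is chosen so that $\dist_{\partial\Omega}$ is smooth on $\{\dist_{\partial\Omega}<2d_0\}$ and that $U=\dist_{\partial\Omega}^{-2}$ on $\{\dist_{\partial\Omega}<d_0\}$. Choose a vector field $X$ on $\overline\Omega$, smooth up to the boundary (so $X$ and its derivative $DX$ are bounded), with $X=-\nabla\dist_{\partial\Omega}$ on $\{\dist_{\partial\Omega}<d_0\}$; set $M:=\sup_{\overline\Omega}\|DX\|$. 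Differentiating $\langle X(\gamma_\varepsilon),\partial_t\gamma_\varepsilon\rangle$, substituting the equation of motion, and integrating over $\R/\tau_\varepsilon\Z$ gives
\[
\varepsilon\int_{\R/\tau_\varepsilon\Z}\langle X(\gamma_\varepsilon),\nabla U(\gamma_\varepsilon)\rangle\,dt=\int_{\R/\tau_\varepsilon\Z}\langle DX(\gamma_\varepsilon)\,\partial_t\gamma_\varepsilon,\partial_t\gamma_\varepsilon\rangle\,dt ,
\]
whose right-hand side is at most $M\int|\partial_t\gamma_\varepsilon|^2\,dt=MA_\varepsilon\le M\capp_R(\Omega)$ in absolute value. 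On $\{\dist_{\partial\Omega}<d_0\}$ we have $\nabla U=-2\dist_{\partial\Omega}^{-3}\nabla\dist_{\partial\Omega}$, so $\langle X,\nabla U\rangle=2\dist_{\partial\Omega}^{-3}=2U^{3/2}>0$; on $\{\dist_{\partial\Omega}\ge d_0\}$ the quantity $\langle X,\nabla U\rangle$ is bounded, and (since $U\le d_0^{-2}$ there, so $|\partial_t\gamma_\varepsilon|^2\ge\tfrac12$ for small $\varepsilon$) the measure of $\{t:\dist_{\partial\Omega}(\gamma_\varepsilon(t))\ge d_0\}$ is $\le2A_\varepsilon$. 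Splitting the left-hand integral along $\{\dist_{\partial\Omega}<d_0\}$ and its complement, these bounds give that $\varepsilon\int_{\{\dist_{\partial\Omega}<d_0\}}U(\gamma_\varepsilon)^{3/2}\,dt$ is bounded uniformly in small $\varepsilon$. Finally, since $U=\dist_{\partial\Omega}^{-2}\ge d_0^{-2}$ on $\{\dist_{\partial\Omega}<d_0\}$, there $U\le d_0\,U^{3/2}$, so $\varepsilon\int_{\{\dist_{\partial\Omega}<d_0\}}U(\gamma_\varepsilon)\,dt$ is uniformly bounded; and $\varepsilon\int_{\{\dist_{\partial\Omega}\ge d_0\}}U(\gamma_\varepsilon)\,dt\le2\varepsilon d_0^{-2}A_\varepsilon\to0$. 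Plugging back into the bookkeeping identity bounds $\tau_\varepsilon$ uniformly for small $\varepsilon$, so $\limsup_{\varepsilon\to0}\tau_\varepsilon<\infty$.

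The step I expect to be the main obstacle is exactly this control of $\varepsilon\int U(\gamma_\varepsilon)\,dt$: a priori $\gamma_\varepsilon$ might linger for a long time in the thin shell $\{\dist_{\partial\Omega}\sim\sqrt\varepsilon\}$, where $\varepsilon U$ is close to the total energy $\tfrac12$, and the virial identity with $X=-\nabla\dist_{\partial\Omega}$ is the mechanism that converts this ``time-weighted depth'' near $\partial\Omega$ into the a priori bounded quantity $A_\varepsilon$. No regularity of $\dist_{\partial\Omega}$ beyond the collar $\{\dist_{\partial\Omega}<d_0\}$ is needed, since $X$ is prescribed only there and extended smoothly (and otherwise arbitrarily) over the rest of $\overline\Omega$; one should, however, keep track of the uniform smallness of $\varepsilon$ required in the two places above (namely $V_\varepsilon\in\mca{V}(\Omega)$ and $2\varepsilon d_0^{-2}<\tfrac12$), which is harmless since $\varepsilon\to0$.
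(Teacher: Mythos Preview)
Your argument is correct, and both halves differ from the paper's.

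For $\liminf_{\varepsilon\to0}\tau_\varepsilon>0$, your route is more direct: the energy identity immediately gives $\tau_\varepsilon\ge A_\varepsilon$, and monotonicity of $\capp_S$ under inclusion (lemma \ref{lem:DV}) bounds $A_\varepsilon$ below by a fixed positive number. The paper instead argues by contradiction: if $\tau_{\varepsilon_k}\to0$, one iterates $\Gamma_{\varepsilon_k}$ roughly $N_k\sim\tau_{\varepsilon_k}^{-1}$ times to produce a sequence with periods in $(1,2)$, invokes the compactness result of \cite{AlbersMazzucchelli} (their proposition 2.1), and derives a contradiction from the blow-up of the $L^{1,2}$ norm. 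Your approach is simpler and avoids the external compactness input; on the other hand, that proposition is needed anyway in the proof of theorem \ref{thm:billiard}, so the paper loses nothing by quoting it here. (A minor remark: citing theorem \ref{thm:reebchord} for $A_{\varepsilon_0}>0$ is correct but slightly indirect; one can also read $\capp_S\ge\delta(\partial D_{\varepsilon_0},\lambda)>0$ straight from definition \ref{defn:capp} and theorem \ref{thm:delta}.)

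For $\limsup_{\varepsilon\to0}\tau_\varepsilon<\infty$, the paper quotes proposition 3.2 of \cite{AlbersMazzucchelli}, which produces a primitive $\lambda_\varepsilon$ of $\omega_\Omega$ with $\lambda_\varepsilon(X_{H_\varepsilon})\ge\tfrac{1}{196}$ on $H_\varepsilon^{-1}(\tfrac12)$; integrating over the orbit gives $\tau_\varepsilon\le196\,A_\varepsilon$. Your virial identity with $X=-\nabla\dist_{\partial\Omega}$ near the boundary is a self-contained substitute for that proposition: it converts the potentially large ``time-near-the-boundary'' term $\varepsilon\int U(\gamma_\varepsilon)$ into the a priori bounded kinetic term $A_\varepsilon$. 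In fact the two arguments are close cousins---the $1$-form in \cite{AlbersMazzucchelli} is built from the distance to $\partial\Omega$ in much the same spirit---but your version keeps the proof inside the present paper. Your bounds on the region $\{\dist_{\partial\Omega}\ge d_0\}$ (where $U\le d_0^{-2}$ and $\nabla U$ is bounded) and the comparison $U\le d_0\,U^{3/2}$ on $\{\dist_{\partial\Omega}<d_0\}$ are all correct, as is the smallness requirement on $\varepsilon$ you flag at the end.
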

\begin{proof}
First we show that $\liminf_{\varepsilon \to 0} \tau_\varepsilon>0$.
Assume that $\liminf_{\varepsilon \to 0} \tau_\varepsilon=0$, i.e.
there exists a sequence $(\varepsilon_k)_k$ such that 
$\varepsilon_k, \tau_{\varepsilon_k} \to 0$ as $k \to \infty$.
Then, there exists a sequence of integers $(N_k)_k$ such that $1<\tau_{\varepsilon_k}N_k<2$.
Set $\Theta_k \in L^{1,2}(\R/\Z,\Omega)$ by
$\Theta_k(t):=\Gamma_{\varepsilon_k}(N_k t)$.
Then, $(\Theta_k,\tau_{\varepsilon_k}N_k)$ is a critical point of $\mca{L}^{1/2}_{\varepsilon_k}$.
By proposition 2.1 in \cite{AlbersMazzucchelli}, a certain subsequence of $(\Theta_k)_k$ converges 
in $L^{1,2}(\R/\Z,\bar{\Omega})$. On the other hand, by lemma \ref{lem:index}-(1), 
\[
|\partial_t\Theta_k|^2_{L^2(\R/\Z)}
=N_k^2\int_{\R/\Z} \big\lvert \partial_t\Gamma_{\varepsilon_k}(t) \big\rvert^2 dt
=N_k^2\tau_{\varepsilon_k} \capp_S(D_{\varepsilon_k},\omega_\Omega).
\]
Since $\capp_S(D_{\varepsilon_k},\omega_\Omega) \to \capp_R(\Omega)$ as $k \to \infty$ and 
$N_k\tau_{\varepsilon_k}>1$, the last term goes to $\infty$ as $k \to \infty$, 
contradicting that a certain subsequence of $(\Theta_k)_k$ converges in $L^{1,2}(\R/\Z,\bar{\Omega})$.

Next we show that $\limsup_{\varepsilon \to 0} \tau_\varepsilon<\infty$.
For each $\varepsilon>0$, define $x_\varepsilon :\R/\tau_\varepsilon \Z \to T^*\Omega$ by 
$x_\varepsilon=(\gamma_\varepsilon, \partial_t\gamma_\varepsilon)$.
Then, $x_\varepsilon$ is an integral curve of $X_{H_\varepsilon}$ on $H_\varepsilon^{-1}(1/2)$.
On the other hand, by proposition 3.2 in \cite{AlbersMazzucchelli}, when $\varepsilon>0$ is sufficiently small,
there exists $\lambda_\varepsilon \in \Omega^1(T^*\Omega)$ such that 
$d\lambda_\varepsilon=\omega_\Omega$ and the following inequality holds on $H_\varepsilon^{-1}(1/2)$:
\[
\lambda_\varepsilon(X_{H_\varepsilon}) \ge \frac{(1/2-0)^3}{2\bigl[(1/2-0)^2+48(1/2-0)^2\bigr]} = \frac{1}{196}.
\]
Notice that $(D_\varepsilon, \lambda_\varepsilon)$ is a Liouville domain, since setting $Z_\varepsilon \in \mca{X}(T^*\Omega)$
by $i_{Z_\varepsilon}\omega_\Omega=\lambda_\varepsilon$, then
$dH_{\varepsilon}(Z_\varepsilon)=\omega_\Omega(Z_\varepsilon, X_{H_\varepsilon})=\lambda_\varepsilon(X_{H_\varepsilon})>0$
on $H_\varepsilon^{-1}(1/2)$.

Since
\[
\int_{\R/\tau_\varepsilon \Z} \lambda_\varepsilon\bigl(X_{H_\varepsilon}(x_\varepsilon(t))\bigr) dt =
\int_{\R/\tau_\varepsilon \Z} x_\varepsilon^* \lambda_\varepsilon 
= \capp_S(D_\varepsilon,\omega_\Omega),
\]
$\tau_\varepsilon \le 196 \cdot \capp_S(D_\varepsilon, \omega_\Omega) \le 196 \cdot \capp_R(\Omega)$ for sufficiently 
small $\varepsilon>0$. This completes the proof.
\end{proof}

Finally, we prove theorem \ref{thm:billiard}.

\begin{proof}
For each $\varepsilon>0$, take $(\Gamma_\varepsilon, \tau_\varepsilon)$, a critical point of $\mca{L}^{1/2}_{\varepsilon}$
which satisfies properties in lemma \ref{lem:index}. 
Then by lemma \ref{lem:period}, we can apply proposition 2.1 in \cite{AlbersMazzucchelli} to the sequence 
$(\Gamma_\varepsilon, \tau_\varepsilon)_{\varepsilon>0}$.
i.e. a certain subsequence of $(\Gamma_\varepsilon, \tau_\varepsilon)_{\varepsilon>0}$ converges to 
$(\Gamma, \tau)$ in $L^{1,2}(\R/\Z, \bar{\Omega}) \times \R_{>0}$, and $\gamma:\R/\tau \Z \to \bar{\Omega}; t \mapsto 
\Gamma(t/\tau)$ is a periodic billiard trajectory on $\Omega$,
such that $E(\gamma)=\lim_{\varepsilon \to 0} E_\varepsilon(\gamma_\varepsilon) = 1/2$.

Since $\mu_{\Morse}(\Gamma_\varepsilon; \mca{L}^{1/2}_\varepsilon|_{L^{1,2}(\R/\Z) \times \{\tau_\varepsilon\}}) \le n+1$ for
each $\varepsilon>0$, proposition 2.2 in \cite{AlbersMazzucchelli} implies that $\Gamma$ has at most $n+1$ 
bounce times.

Finally we prove that $\tau=\capp_R(\Omega)$. For each $\varepsilon>0$, by lemma \ref{lem:index}-(1), 
\[
\capp_S(D_\varepsilon, \omega_\Omega)
=\int_{\R/\tau_\varepsilon \Z}|\partial_t\gamma_\varepsilon|^2 dt
=\tau_\varepsilon^{-1} \int_{\R/\Z} |\partial_t\Gamma_\varepsilon|^2 dt.
\]
Since $\lim_{\varepsilon \to 0} \tau_\varepsilon=\tau$ and 
$\lim_{\varepsilon \to 0} \Gamma_\varepsilon \to \Gamma$ in $L^{1,2}(\R/\Z,\bar{\Omega})$, by taking limit of the
above identity we get 
\[
\capp_R(\Omega) =\lim_{\varepsilon \to 0} \capp_S(D_\varepsilon,\omega_\Omega)= \tau^{-1} \int_{\R/\Z} |\partial_t \Gamma|^2 dt.
\]
On the other hand, since $E(\gamma)=1/2$, $|\partial_t \gamma|=1$ for almost every $t \in \R/\tau\Z$.
Hence $|\partial_t \Gamma|=\tau$ for almost every $t \in \R/\Z$.
Therefore
$\capp_R(\Omega)=\tau^{-1} \cdot \tau^2 = \tau$.
\end{proof}

\textbf{Acknowledgements.}
The author would like to appreciate 
his advisor professor Kenji Fukaya for reading the manuscript and 
giving precious comments, 
and also 
the anonymous referee for many suggestions.
The author is supported by Grant-in-Aid for JSPS fellows.

\section*{Appendix: proof of theorem \ref{thm:truncated}}\label{sec:appendix}
Assume that $(X,\lambda)$, $(X,\lambda')$ are Liouville domains such that $d\lambda=d\lambda'$.

Take arbitrary smooth function $\rho \colon \R \to [0,1]$ such that for sufficiently large $s_0>0$
$\rho(s) = \begin{cases}
            0 &(s \le -s_0) \\
            1 &(s \ge s_0) 
           \end{cases}$.
Let $\lambda_s:=\bigl(1-\rho(s)\bigr)\lambda+\rho(s)\lambda'$. 
Then, $(X,\lambda_s)_{s \in \R}$ is a smooth family of Liouville domains.
Denote the completion of $(X,\lambda_s)$ by $(\hat{X}_s, \hat{\lambda}_s)$.

Our aim is to define a morphism from the
Floer chain complex on $(X,\lambda)$ to the Floer chain complex on $(X,\lambda')$.
To define a morphism, we study the Floer equation on a fiber bundle over $\R$, which is constructed as 
follows. 
First, consider trivial bundles 
\[
E_X: X \times \R \to \R , \qquad 
E_{\partial X}: \bigl(\partial X \times (0,\infty)\bigr) \times \R \to \R.
\]
Define an embedding 
\[
I: \bigl(\partial X \times (0,1]\bigr) \times \R \to X \times \R; \quad 
\bigl((z,r),s\bigr) \mapsto \bigl(I_s(z,r), s \bigr)
\]
by ($Z_s$ denotes the vector field on $X$ characterized by $i_{Z_s}d\lambda_s = \lambda_s$):
\begin{align*}
I_s(z,1)&=z  \qquad\qquad\qquad\,\,\, (z \in \partial X), \\
\partial_rI_s(z,r)&=r^{-1}Z_s\bigl(I_s(z,r)\bigr) \qquad \bigl(z \in \partial X, r \in (0,1]\bigr).
\end{align*}

Let $E:=E_X \cup_I E_{\partial X}$. 
$E$ is a fiber bundle over $\R$, and each 
fiber $E_s$ is identified with $\hat{X}_s$. 
Note that there exist natural bundle maps over $\R$:
\begin{align*}
j_1&: E_X \to E, \\
j_2&: \bigl(\partial X \times [1,\infty) \bigr) \times \R \to E.
\end{align*}

To study the Floer equation on $E$, we equip $E$ with a connection $\nabla$, and denote the horizontal lift of 
$\partial_s$ to $E$ by $W$.
We take $\nabla$ so that $W$ satisfies 
\begin{itemize}
\item $j_1^*(W)=(0,\partial_s)$, 
\item $j_2^*(W)=(0,\partial_s)$ outside $\bigl(\partial X \times [1,2]\bigr) \times [-s_0, s_0]$.
\end{itemize}

Let $H_- \in \mca{H}^\rest_\ad(X,\lambda)$, $H_+ \in \mca{H}^\rest_\ad(X,\lambda')$ and 
$(H_s)_{s \in \R}$ be a family of Hamiltonians with the following properties:
\begin{itemize}
\item $H_s \in \mca{H}^{\rest}(X,\lambda_s)$ for any $s \in \R$.
\item $\partial_s H_{s,t}(x)\ge 0$ for any $(s,t) \in \R \times \R/\Z$ and $x \in X$.
\item There exists $s_1 \ge s_0$ such that 
$H_s = \begin{cases}
       H_- & (s \le -s_1) \\
       H_+ & (s \ge s_1)
       \end{cases}$.
\item $\partial_s a_{H_s} \ge 0$.
\end{itemize}

Let $(J_s)_s$ be a family of (time-dependent) almost complex structures on $E_s=\hat{X_s}$, such that 
$J_s \in \mca{J}(X,\lambda_s:1)$ for any $s$, and
\[
J_s = \begin{cases}
      J_{-s_0} &(s \le -s_0) \\
      J_{s_0}  &(s \ge s_0)
      \end{cases}.
\]
We denote $J_{\pm s_0}$ by $J_{\pm}$.

Then, for $x_- \in \mca{P}(H_-)$, $x_+ \in \mca{P}(H_+)$, we study the following Floer equation for 
$u: \R \times \R/\Z \to E$:
\begin{itemize}
\item $u(s,t) \in E_s$,
\item $\nabla_s u - J_{s,t}(\partial_t u - X_{H_{s,t}} \circ u) = 0$, 
\item $u(s) \to x_{\pm}$ as $s \to \pm \infty$.
\end{itemize}
We denote the moduli space of solutions of the above Floer equation by $\hat{\mca{M}}\bigl(x,y:(H_s,J_s)_s\bigr)$.

In the following, we abbreviate $a_{H_s}$ by $a(s)$. 
The key step in the proof of theorem \ref{thm:truncated} is to prove the following lemma:

\begin{lem}\label{lem:C0bound-3}
There exist $c_0, c_1>0$, which depend only on $(J_s)_s$ and $\rho$, such that:
if $a$ satisfies $\partial_s a \ge c_0a +c_1$
on $[-s_0, s_0]$, then for any  
$x_- \in \mca{P}(H_-)$, $x_+ \in \mca{P}(H_+)$ and 
$u \in \hat{\mca{M}}\bigl(x_-,x_+:(H_s,J_s)_s\bigr)$, 
$u(\R \times \R/\Z) \subset j_1(X \times \R)$.
\end{lem}
\begin{proof}
\textbf{Step 1}.
First note that for any $x \in \mca{P}(H_-) \cup \mca{P}(H_+)$, 
$x(\R/\Z) \subset X$. This is because $H_\pm \in \mca{H}^\rest_\ad(X,\lambda)$.
Our aim is to show that $u(\R \times \R/\Z)$ is contained in $j_1(X \times \R)$.
If this is not true, for some $r_0>1$
\[
D_{r_0}: = u^{-1}\Bigl( j_2\bigl(\partial X \times [r_0,\infty)\bigr) \times \R \Bigr)
\]
is a non-empty surface with boundary. 
Note that $D_{r_0}$ must be compact, since both $x_-(\R/\Z)$ and $x_+(\R/\Z)$ are contained in 
$X$.
Define $v: D_{r_0} \to \partial X \times [1,\infty)$ by 
\[
u(s,t) = j_2\bigl(v(s,t), s \bigr), 
\]
and define $z: D_{r_0} \to \partial X$ and $r: D_{r_0} \to [1,\infty)$ by 
$v(s,t)=\bigl(z(s,t), r(s,t)\bigr)$.

We will prove that there exist $c_0, c_1>0$, which depend only on $(J_s)_s$ and $\rho$, 
such that if $a$ satisfies $\partial_s a  \ge c_0a+c_1$ on $[-s_0,s_0]$, 
then $\Delta r \ge 0$. Since $r \equiv r_0$ on $\partial D_{r_0}$, this implies 
$r \le r_0$ on $D_{r_0}$. On the other hand, by definition $r>r_0$ on $\interior D_{r_0}$, hence 
we get a contradiction.

\textbf{Step 2}.
We calculate $\Delta r(s,t)$ for $(s,t) \in D_{r_0}$.
Recall that $u$ satisfies the Floer equation 
\[
\nabla_s u - J_{s,t}\bigl(\partial_t u - X_{H_{s,t}}(u)\bigr) =0.
\]
Since $H_{s,t}(z,r)=a(s)r+b(s)$ on $\partial X \times [1,\infty)$, 
\[
J_{s,t}X_{H_{s,t}} = -\nabla_{s,t} H_{s,t} = -ar \partial_r.
\]
$\nabla_{s,t} H_{s,t}$ denotes the gradient of $H_{s,t}$ with respect to $\langle\,,\,\rangle_{J_{s,t}}$.

Moreover, by definition of $W$, 
\[
\partial_s u = \nabla_s u + W(u).
\]
Hence, the Floer equation can be written as:
\begin{equation}\label{eq:Floer}
\partial_s u - W(u) - J_{s,t}\partial_t u - ar\partial_r =0.
\end{equation}
It is convinient to define $\hat{\lambda} \in \Omega^1(E)$ by
\begin{itemize}
\item $\hat{\lambda}|_{E_s} = \hat{\lambda}_s$ for any $s \in \R$.
\item $\hat{\lambda}(W) \equiv 0$.
\end{itemize}

Then, by applying $dr$ and $\hat{\lambda}$ to (\ref{eq:Floer}) respectively, we get 
\begin{align*}
&\partial_s r + \hat{\lambda}(\partial_t u) - ar - dr\bigl(W(u)\bigr) =0, \\
&\hat{\lambda}(\partial_s u) - \partial_t r =0.
\end{align*}
Then, 
\begin{align*}
\Delta r &= \partial_t\bigl(\hat{\lambda}(\partial_s u)\bigr) - \partial_s\bigl(\hat{\lambda}(\partial_t u)\bigr) 
+ \partial_s(ar) + \partial_s\bigl(dr\bigl(W(u)\bigr)\bigr)\\
&=d\hat{\lambda}(\partial_t u, \partial_s u) + \partial_s(ar) + \partial_s\bigl(dr\bigl(W(u)\bigr)\bigr) \\
&=|\nabla_s u|_{J_{s,t}}^2 + d\hat{\lambda}(\partial_t u, W(u)) + \partial_s a \cdot r 
+a \cdot dr(W(u)) + \partial_s \bigl(dr(W(u)) \bigr).
\end{align*}
In the following, we abbreviate $|\cdot|_{J_{s,t}}$ by $|\cdot|_{s,t}$.

\textbf{Step 3.} We prove that $\Delta r(s,t) \ge 0$ when $s \notin [-s_0, s_0]$.
Assume that $s \notin [-s_0, s_0]$. 
Then, since $j_2^*W=(0,\partial_s)$ on $\bigl(\partial X \times [1,\infty)\bigr) \times \bigl(\R \setminus [-s_0,s_0]\bigr)$,
\[
dr\bigl(W(u(s,t))\bigr)=0.
\]
Moreover, since $\partial_s \lambda_s =0$ for $s \notin [-s_0, s_0]$, 
\[
i_{W(u(s,t))}d\hat{\lambda}=0.
\]
Hence
\[
\Delta r(s,t) = \big\lvert \nabla_s u(s,t) \big\rvert_{s,t}^2 + \partial_s a(s) \cdot r(s,t) \ge 0.
\]

\textbf{Step 4.}
Next we prove the following: 
there exist $c_2, c_3>0$ which depend only on $(J_s)_s$ and $\rho$, such that if $a$ satisfies 
$\partial_s a \ge c_2a+c_3$ on $[-s_0,s_0]$, then 
$r(s,t) \le 2$ for any $(s,t) \in \R \times \R/\Z$.

Assume that $r(s,t)>2$ for some $(s,t) \in \R \times \R/\Z$.
Then, since $j_2^*W=(0,\partial_s)$ on $\bigl(\partial X \times [2,\infty)\bigr) \times \R$, 
\[
dr\bigl(W(u(s,t))\bigr)=0.
\]
Hence 
\[
\Delta r(s,t) = |\nabla_s u|_{s,t}^2 + d\hat{\lambda}\bigl(\partial_t u, W(u)\bigr) + \partial_s a \cdot r.
\]
Since $j_2^*\hat{\lambda}=r\lambda_s$, 
\[
i_{\partial_s}(j_2^*d\hat{\lambda})
=i_{\partial_s}\bigl(dr \wedge \lambda_s + r(d\lambda_s+ds \wedge \partial_s\lambda_s)\bigr) = r\partial_s\lambda_s.
\]
Hence
\[
-d\hat{\lambda}\bigl(\partial_tu, W(u)\bigr)=i_{\partial_s}(j_2^*d\hat{\lambda})(\partial_t v) 
= r\partial_s\lambda_s(\partial_t v) 
= r\partial_s\lambda_s(\partial_t z).
\]
Therefore, there exists $c_4>0$ which depends only on $(J_s)_s$ and $\rho$ such that
\[
\big\lvert d\hat{\lambda}\bigl(\partial_t u, W(u) \bigr) \big\rvert 
=r \cdot \big\lvert \partial_s\lambda_s(\partial_t z) \big\rvert
\le c_4 r^{1/2} |\partial_t u|_{s,t}.
\]
Moreover, since $u$ satisfies the Floer equation 
$\nabla_s u - J_{s,t}\partial_t u - ar\partial_r=0$, 
\[
|\partial_t u|_{s,t} \le |\nabla_s u|_{s,t} + |ar\partial_r|_{s,t}
=|\nabla_s u|_{s,t} + ar^{1/2}.
\]
Therefore
\begin{align*}
\Delta r &\ge |\nabla_s u|_{s,t}^2 + \partial_s a \cdot r - c_4\cdot r^{1/2}\bigl(|\nabla_s u|_{s,t}+ar^{1/2}\bigr) \\
         &\ge |\nabla_s u|_{s,t}^2 + \partial_s a \cdot r - \bigl(|\nabla_s u|_{s,t}^2 + c_4^2r\bigr)/2 - c_4ar \\
         &\ge (\partial_s a - c_4 a - c_4^2/2)r.
\end{align*}
Hence setting $c_2:=c_4$, $c_3:=c_4^2/2$, the following holds:
\begin{quote}
Assume $\partial_s a \ge c_2a+c_3$ on $[-s_0, s_0]$. 
Then, $\Delta r(s,t) \ge 0$ if $s \in [-s_0,s_0]$ and $r(s,t)>2$.
\end{quote}
On the other hand, by Step 3, $\Delta r(s,t) \ge 0$ if $s \notin [-s_0,s_0]$.
Hence if $\partial_s a \ge c_2a+c_3$ on $[-s_0, s_0]$, 
$\Delta r(s,t) \ge 0$ on $\{r>2\} \subset \R \times \R/\Z$.
This implies that $\{r>2\} = \emptyset$, by same arguments as step 1.

\textbf{Step 5.}
Finally we prove that there exist $c_0, c_1>0$, which depend only on $(J_s)_s$ and $\rho$, 
such that if $a$ satisfies $\partial_s a \ge c_0 a + c_1$ on $[-s_0,s_0]$, then 
$\Delta r \ge 0$.

It is convinient to equip $E$ with a Riemannian metric $\langle\,,\,\rangle_t$ such that 
\begin{itemize}
\item $\langle\,,\,\rangle_t|_{E_s}$ is equal to $\langle\,,\,\rangle_{s,t}$.
\item For any $q \in E_s$, $T_qE_s$ and $W_q$ is orthogonal with respect to $\langle\,,\,\rangle_t$.
\item For any $q \in E$, $|W_q|_t=1$.
\end{itemize}

By step 4, $r(s,t) \le 2$ for any $(s,t) \in \R \times \R/\Z$.
Therefore there exist $c_5,c_6,c_7>0$ such that
\begin{align*}
\big\lvert d\hat{\lambda}(\partial_t u, W(u)) \big\rvert &\le c_5 |\partial_t u|_t, \\
\big\lvert dr\bigl(W(u)) \big\rvert &\le c_6, \\
\big\lvert \partial_s\bigl(dr(W(u))\bigr) \big\rvert &\le c_7 |\partial_s u|_t
\end{align*}
on $[-s_0, s_0] \times \R/\Z$. On the other hand, 
\begin{align*}
|\partial_t u|_t &\le |\nabla_s u|_t + |ar\partial_r|_t = |\nabla_s u|_t + ar^{1/2} \le |\nabla_s u|_t + a\sqrt{2}, \\
|\partial_s u|_t &\le |W(u)|_t + |\nabla_s u|_t =1+|\nabla_s u|_t.
\end{align*}
Hence, 
\begin{align*}
\Delta r &\ge |\nabla_s u|_t^2 + \partial_s a \cdot r - c_5\bigl(|\nabla_s u|_t+ a\sqrt{2}\bigr) 
                             - c_6 a - c_7\bigl(1+|\nabla_s u|\bigr) \\
&=|\nabla_s u|_t^2 + \partial_s a \cdot r - (\sqrt{2}c_5+c_6)a - (c_5+c_7)|\nabla_s u|_t - c_7.
\end{align*}
Therefore, setting $c_0:=\sqrt{2}c_5+c_6$, $c_1:=c_7+(c_5+c_7)^2/2$,
$\Delta r \ge \partial_s a - c_0 a -c_1$ (we use $\partial_s a \ge 0$ and $r \ge 1$).
Hence setting $c_0$ and $c_1$ as above, the following holds:
\begin{quote}
If $\partial_s a \ge c_0a+c_1$ on $[-s_0,s_0]$, then $\Delta r \ge 0$ on $[-s_0,s_0] \times \R/\Z$.
\end{quote}
On the other hand, $\Delta r(s,t) \ge 0$ when $s \notin [-s_0,s_0]$, as we have proved in step 3.
This completes the proof of step 5.
\end{proof}

\begin{cor}\label{cor:C0bound-3}
Let $c_0, c_1>0$ satisfy the condition in lemma \ref{lem:C0bound-3}, and 
assume that $\partial_s a \ge c_0 a +c_1$ on $[-s_0, s_0]$.
Then, for any $x_- \in \mca{P}(H_-)$, $x_+ \in \mca{P}(H_+)$ and 
$u \in \hat{\mca{M}}\bigl(x_-, x_+ :(H_s,J_s)_s\bigr)$, 
\[
\partial_s \mca{A}_{H_s}\bigl(u(s):X,\lambda_s\bigr) \le 0.
\]
In particular, if $\mca{A}_{H_+}(x_+)>\mca{A}_{H_-}(x_-)$, then 
$\hat{\mca{M}}\bigl(x_-, x_+:(H_s,J_s)_s\bigr)=\emptyset$.
\end{cor}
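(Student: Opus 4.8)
The plan is to deduce the corollary from the $C^0$-estimate in Lemma \ref{lem:C0bound-3} by carrying out the standard energy-type computation for the $s$-dependent Floer equation on the bundle $E$, restricted to the region where everything lives in $j_1(X\times\R)$. First I would fix $c_0,c_1>0$ as provided by Lemma \ref{lem:C0bound-3} and assume $\partial_s a\ge c_0a+c_1$ on $[-s_0,s_0]$, so that for any solution $u\in\hat{\mca{M}}\bigl(x_-,x_+:(H_s,J_s)_s\bigr)$ we have $u(\R\times\R/\Z)\subset j_1(X\times\R)$; via $j_1$ we then regard $u(s,\cdot)$ as a loop in $\hat X_s=E_s$, and the covariant derivative $\nabla_s u$ agrees with the ordinary derivative of $u(s)$ in the fiber direction. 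Here I would also use the fact, noted in Step 1 of the previous proof, that the asymptotic orbits $x_\pm$ lie entirely inside $X$ where $\partial_s\lambda_s$ (and $\partial_s H_{s,t}$ along the relevant region) is controlled.

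The key computation is to differentiate the action $\mca{A}_{H_s}\bigl(u(s):X,\lambda_s\bigr)=\int_{\R/\Z} u(s)^*\hat{\lambda}_s - H_{s,t}\bigl(u(s,t)\bigr)\,dt$ in $s$. Differentiating under the integral and using the Floer equation $\nabla_s u - J_{s,t}(\partial_t u - X_{H_{s,t}}\circ u)=0$ together with the relation $i_{X_{H_{s,t}}}d\hat{\lambda}_s=-dH_{s,t}$, the usual manipulation (integrating the $d\hat\lambda_s(\partial_s u,\partial_t u)$ term by parts over $\R/\Z$, which kills the total $t$-derivative) gives
\begin{align*}
-\partial_s\mca{A}_{H_s}\bigl(u(s)\bigr) &= \int_{\R/\Z} |\nabla_s u|_{J_{s,t}}^2 + \partial_s H_{s,t}(u) \, dt - \int_{\R/\Z} (\partial_s\hat\lambda_s)\bigl(\partial_t u(s,t)\bigr)\,dt + \int_{\R/\Z}\partial_t\bigl(\,\cdot\,\bigr)\,dt.
\end{align*}
The genuinely new point compared with the already-treated monotone case is the extra term coming from $\partial_s\lambda_s\ne 0$; but since $u$ now takes values in $X$ and $\partial_s\lambda_s$ is supported where $s\in[-s_0,s_0]$, this is exactly the kind of term already estimated in Steps 4--5 of the proof of Lemma \ref{lem:C0bound-3}, absorbed into the $c_0,c_1$ there. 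Combined with $\partial_s H_{s,t}\ge 0$ on $X$ (hypothesis (2) on $(H_s)$) and $\partial_s a\ge 0$, one concludes $-\partial_s\mca{A}_{H_s}(u(s))\ge 0$, i.e. $\partial_s\mca{A}_{H_s}\bigl(u(s):X,\lambda_s\bigr)\le 0$.

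Given the monotonicity $\partial_s\mca{A}_{H_s}(u(s))\le 0$ and the asymptotic conditions $u(s)\to x_\pm$ as $s\to\pm\infty$, we get $\mca{A}_{H_+}(x_+)=\lim_{s\to+\infty}\mca{A}_{H_s}(u(s))\le \lim_{s\to-\infty}\mca{A}_{H_s}(u(s))=\mca{A}_{H_-}(x_-)$; hence if $\mca{A}_{H_+}(x_+)>\mca{A}_{H_-}(x_-)$ the moduli space must be empty, which is the last assertion. I expect the main obstacle to be bookkeeping rather than conceptual: one must be careful that the action functional is computed with respect to the \emph{varying} primitive $\hat\lambda_s$, so the $\partial_s$-derivative genuinely produces the $\partial_s\lambda_s$ term, and one must check that on the support of $\partial_s\lambda_s$ the relevant quantities $(r(s,t)\le 2$, and the bounds on $|\nabla_s u|$, $|\partial_t u|$) are precisely those already bounded in the proof of Lemma \ref{lem:C0bound-3}, so that no new constants beyond $c_0,c_1$ are needed. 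Once that matching is made explicit, the corollary follows immediately.
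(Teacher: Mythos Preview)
Your setup is correct up to the point where you identify the extra term $\int_{\R/\Z}(\partial_s\lambda_s)(\partial_t u(s,t))\,dt$ arising from the $s$-dependence of the primitive. The gap is in how you propose to dispose of it. You claim this term ``is exactly the kind of term already estimated in Steps 4--5 of the proof of Lemma \ref{lem:C0bound-3}, absorbed into the $c_0,c_1$ there.'' But those steps produced \emph{pointwise} bounds of the form $|d\hat\lambda(\partial_t u,W(u))|\le c_5|\partial_t u|_t$ in order to establish $\Delta r\ge 0$; there is no mechanism by which such an estimate yields $\int(\partial_s\lambda_s)(\partial_t u)\le \int|\nabla_s u|^2+\partial_sH_{s,t}$. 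Indeed, the right-hand side can be arbitrarily small (e.g.\ where $\nabla_s u$ is small and $\partial_sH_{s,t}$ is small on the image), while $|\partial_t u|$ need not be. So absorption is not available, and no choice of $c_0,c_1$ rescues this.

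What the paper actually does is cleaner and conceptually different: the extra term is not estimated, it \emph{vanishes}. Since $u(s)$ is a contractible loop in $X$ (the asymptotic orbits are contractible and the Floer cylinder gives a homotopy), one can cap it by a disk $\overline{u(s)}:D^2\to X$ and apply Stokes:
\[
\int_{\R/\Z} u(s)^*(\partial_s\lambda_s)=\int_{D^2}\overline{u(s)}^*\,d(\partial_s\lambda_s)=\int_{D^2}\overline{u(s)}^*\,\partial_s(d\lambda_s)=0,
\]
because $d\lambda_s=\omega$ is independent of $s$. This is precisely why the $C^0$-bound $u\subset j_1(X\times\R)$ from Lemma \ref{lem:C0bound-3} matters here: it places the loop in $X$, where the disk can be chosen and where $d\lambda_s$ is constant in $s$. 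Once this term drops out, $\partial_s\mca{A}_{H_s}(u(s))=-\int(|\nabla_s u|^2+\partial_sH_{s,t})\le 0$ follows immediately from $\partial_sH_{s,t}\ge 0$ on $X$, and your final paragraph deducing emptiness of the moduli space is then correct.
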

\begin{proof}
By lemma \ref{lem:C0bound-3}, $u(\R \times \R/\Z) \subset j_1(X \times \R)$.
Since $j_1^*W = (0,\partial_s)$, 
\[
\partial_s\mca{A}_{H_s}\bigl(u(s):X,\lambda_s\bigr)
=\int_{\R/\Z} u(s)^* \partial_s \lambda_s 
-\int_{\R/\Z} \bigl\{ |\nabla_s u(s,t)|^2 + \partial_s H_{s,t}\bigl(u(s,t)\bigr) \bigr\} dt.
\]
Since $u(s):\R/\Z \to X$ is contractible, one can extend $u(s)$ to 
$\overline{u(s)}: D^2 \to X$ so that $\overline{u(s)}(e^{2\pi i \theta}) = u(s)(\theta)$. Then, 
\[
\int_{\R/\Z} u(s)^* \partial_s\lambda_s = \int_{D^2} \overline{u(s)}^* \partial_s(d\lambda^s) =0.
\]
Hence 
\[
\partial_s\mca{A}_{H_s}\bigl(u(s):X,\lambda_s\bigr) 
=-\int_{\R/\Z} \bigl\{ |\nabla_s u(s,t)|^2 + \partial_s H_{s,t}\bigl(u(s,t)\bigr) \bigr\} dt \le 0,
\]
where the last inequality follows from $\partial_s H_{s,t} \ge 0$ on $X$.
\end{proof}

Finally we prove theorem \ref{thm:truncated}.
Define a morphism 
$\psi: C_*\bigl(H_-,\partial_{H_-,J_-} \bigr) \to C_*\bigl(H_+,\partial_{H_+,J_+} \bigr)$
by 
\[
\psi\bigl([x_-]\bigr)= \sum_{x_+ \in \mca{P}_k(H_+)} \sharp\hat{\mca{M}}\bigl(x_-,x_+:(H_s,J_s)_s\bigr)\cdot [x_+] \qquad \bigl(x_- \in \mca{P}_k(H_-)\bigr).
\]
Then, by lemma \ref{lem:C0bound-3}, $\psi$ is a chain map. 
Moreover, by corllary \ref{cor:C0bound-3}, 
$\psi$ defines a chain map 
$\psi^{<a}: C_*^{<a}\bigl(H_-,\partial_{H_-,J_-}\bigr) \to C_*^{<a}\bigl(H_+,\partial_{H_+,J_+}\bigr)$ for any 
$a \in (0,\infty]$.
This defines a morphism $\HF_*^{<a}(H_-) \to \HF_*^{<a}(H_+)$, and 
we denote this morphism also by $\psi^{<a}$. 
It is clear from the construction that 
\[
\xymatrix{
\HF_*^{<a}(H_-:X,\lambda)\ar[r]^{\psi^{<a}}\ar[d]&\HF_*^{<a}(H_+:X,\lambda')\ar[d] \\
\HF_*^{<b}(H_-:X,\lambda)\ar[r]_{\psi^{<b}}&\HF_*^{<b}(H_+:X,\lambda')
}
\]
commutes for any $a \le b$.

By taking a limit, we get a morphism $\SH_*^{<a}(X,\lambda) \to \SH_*^{<a}(X,\lambda')$ (still denoted by $\psi^{<a}$), 
and the following diagram commutes:
\[
\xymatrix{
\SH_*^{<a}(X,\lambda)\ar[r]^{\psi^{<a}}\ar[d]&\SH_*^{<a}(X,\lambda')\ar[d] \\
\SH_*^{<b}(X,\lambda)\ar[r]_{\psi^{<b}}&\SH_*^{<b}(X,\lambda')
}.
\]
It is easy to check that $\psi^{<a}$ does not depend on choices of $\rho$ and $(J^s)_s$, 
$\psi^{<a}:\SH^{<a}(X,\lambda) \to \SH^{<a}(X,\lambda)$ is the identity, and 
the following diagram commutes:
\[
\xymatrix{
\SH_*^{<a}(X,\lambda)\ar[rd]_{\psi^{<a}}\ar[rr]^{\psi^{<a}}&&\SH_*^{<a}(X,\lambda')\ar[ld]^{\psi^{<a}} \\
&\SH_*^{<a}(X,\lambda'')&
}
\]
Then, it follows that $\psi^{<a}$ is isomorphic.
Hence this completes the proof of theorem \ref{thm:truncated}.



\end{document}